\documentclass[12pt]{amsart}    
\usepackage{graphicx}
\usepackage{layout}
\usepackage{amsfonts}
\usepackage{amssymb}
\usepackage{hyperref}
\usepackage{amsthm}
\usepackage{amsmath}
\usepackage{verbatim}
\usepackage[nameinlink]{cleveref}
\numberwithin{equation}{section} 
\crefformat{equation}{#2(#1)#3}
\Crefformat{equation}{#2(#1)#3} 
\usepackage{mathrsfs}
\usepackage{faktor}
\usepackage{epsfig}
\usepackage{color}
\usepackage{stmaryrd}
\usepackage{cite}
\usepackage{soul} 

\usepackage[normalem]{ulem}
\usepackage{footmisc}
\usepackage{mathtools}

\newcommand{\BP}{\mathbf{P}}
\newcommand{\Div}{\mathrm{Div}}
\newcommand{\dv}{\mathrm{div}} 
\newcommand{\disc}{\mathsf{d}}
\newcommand{\PL}{\mathbf{PL}}
\newcommand{\CDiv}{\mathbf{CartierDiv}}
\newcommand{\gd}{\mathrm{Gd}}
\newcommand{\Tot}{\mathrm{Tot}}
\newcommand{\ugamma}{\underline{\gamma}}
\newcommand{\Z}{\underline{Z}}
\newcommand{\BC}{\mathbf{C}} 
\newcommand{\BQ}{\mathbf{Q}}

\newcommand{\BZ}{\mathbf{Z}} 
\newcommand{\BA}{\mathbf{A}}
\newcommand{\BR}{\mathbf{R}}

\newcommand{\Pt}{\widetilde{P}}
\newcommand{\Gt}{\widetilde{G}}
\newcommand{\Lt}{\widetilde{\mathscr{L}}}
\newcommand{\Nt}{\widetilde{\mathscr{N}}}
\newcommand{\uy}{\underline{y}}

\newcommand{\laurent}{\BZ(\!(q^{1/w})\!)}
\newcommand{\laurentp}{\BZ'(\!(q^{1/w})\!)}  
\newcommand{\pwr}{\BZ \llbracket q^{1/w} \rrbracket }   
  
\newcommand{\Sym}{\mathrm{Sym}}   
\newcommand{\Tate}{\mathrm{Tate}}
\newcommand{\Disc}{\mathrm{Disc}}
\newcommand{\spli}{\mathrm{split}}
\newcommand{\amplegg}{\mathrm{ample}, g, g}
\newcommand{\ISgg}{\mathrm{IS}, g, g}
\newcommand{\Frac}{\mathrm{Frac}}
\newcommand{\DD}{\mathrm{DD}}
\newcommand{\DEG}{\mathrm{DEG}}
\newcommand{\Pic}{\mathrm{Pic}}
\newcommand{\Proj}{\mathrm{Proj}}
\newcommand{\LL}{\mathscr{L}}
\newcommand{\ua}{\underline{a}}

\newcommand{\Mample}{\mathrm{Mumford-ample}}
\newcommand{\ad}{\mathrm{ad}} 
\newcommand{\adictorus}{\BG_{m}^{g, \mathrm{an}}}

\newcommand{\Spf}{\mathrm{Spf}}

\newcommand{\Spec}{\mathrm{Spec}}

\newcommand{\Spa}{\mathrm{Spa}}
\newcommand{\an}{\mathrm{an}}

\newcommand{\N}{\mathrm{N}}

\newcommand{\Cl}{\mathrm{Cl}}

\newcommand{\BG}{\mathbf{G}}

\newcommand{\GL}{\mathrm{GL}}

\newcommand{\Hom}{\mathrm{Hom}}

\hypersetup{colorlinks = true} 
\usepackage[margin = 1.06in]{geometry} 
\newtheorem{lemma}{Lemma}
\newtheorem{proposition}[lemma]{Proposition}
\newtheorem{theorem}[lemma]{Theorem}
\newtheorem{corollary}[lemma]{Corollary}

\theoremstyle{definition}
\newtheorem{construction}[lemma]{Construction}

\newtheorem{definition}[lemma]{Definition}

\numberwithin{lemma}{section}  
\theoremstyle{remark}
\newtheorem*{remark}{Remark}
\usepackage[msc-links, alphabetic, y2k]{amsrefs}

\usepackage[toc]{appendix} 
\newsavebox\ideabox  
\newenvironment{idea}
  {\begin{equation}
   \begin{lrbox}{\ideabox}
   \begin{minipage}{\dimexpr\columnwidth-2\leftmargini}
   \setlength{\leftmargini}{0pt}%
   \begin{quote}}
  {\end{quote}
   \end{minipage}
   \end{lrbox}\makebox[0pt]{\usebox{\ideabox}}
   \end{equation}}
\usepackage{tikz-cd} 
\usetikzlibrary{fadings}   
\makeatletter
\usepackage{enumitem}

\usepackage{etoolbox}
\AtBeginEnvironment{itemize}{\setlength\parskip{0pt}}
\AfterEndEnvironment{itemize}{\vspace*{-\dimexpr\parskip\relax}}
\AtBeginEnvironment{enumerate}{\setlength\parskip{0pt}}
\AfterEndEnvironment{enumerate}{\vspace*{-\dimexpr\parskip\relax}}
\AfterEndEnvironment{ex}{\vspace*{-2\dimexpr\parskip\relax}}

\tikzfading[name=soft edge,
  inner color=transparent!0,
  outer color=transparent!100]

\makeatletter
\DeclareRobustCommand*{\mfaktor}[3][]
{
   { \mathpalette{\mfaktor@impl@}{{#1}{#2}{#3}} }
}
\newcommand*{\mfaktor@impl@}[2]{\mfaktor@impl#1#2}
\newcommand*{\mfaktor@impl}[4]{
   \settoheight{\faktor@zaehlerhoehe}{\ensuremath{#1#2{#3}}}%
   \settoheight{\faktor@nennerhoehe}{\ensuremath{#1#2{#4}}}%
      \raisebox{-0.5\faktor@zaehlerhoehe}{\ensuremath{#1#2{#3}}}%
      \mkern-4mu\diagdown\mkern-5mu%
      \raisebox{0.5\faktor@nennerhoehe}{\ensuremath{#1#2{#4}}}%
}

\begin{document}
\linespread{1.1}
\setcounter{tocdepth}{2}  
\title[Fourier expansions for Kuga--Sato degenerations]{A Fourier expansion principle for coherent cohomology of line bundles on powers of Tate curves}
\author{Kenz Kallal}
\date{20 October 2026}  
\maketitle 
\begin{abstract}
In the case of the totally degenerate split degenerations given by fibered powers of Tate curves over $\pwr$, this article proves a Fourier expansion principle that applies to higher coherent (equivariant) cohomology classes of rigidified line bundles on the $\laurent$-fiber. We do not require any ampleness hypotheses. This generalizes a previous Fourier expansion principle due to Faltings--Chai that applies only to sections of ample line bundles; that version was used as a key step in their proof of their classification of polarized degenerations and thus in the construction of toroidal compactifications of moduli spaces of polarized abelian varieties. This article can be considered as a theory of Fourier expansions for algebraic theta cocycles associated to possibly indefinite quadratic forms, and has application to arithmetic: in joint work with Venkatesh, we have used it to produce an algebraic theory of Kudla--Millson indefinite modular theta functions. 
  \end{abstract}
\tableofcontents

\section{Introduction}\label{intro}

The purpose of this article is to prove a Fourier expansion principle for higher coherent cohomology classes of line bundles on certain  arithmetic (meaning: over $\BZ$ or other number ring or localization thereof) degenerating families of abelian varieties. 

\subsection{An illustration in a simple analytic case}
To illustrate what we mean by a \emph{Fourier expansion principle} in this context, we begin with the simplest possible degenerating family of abelian varieties --- the familiar analytic family of elliptic curves occurring near a cusp of a modular curve. Over a small punctured disc $\Delta^*$ around $0$, it is given over $q \in \Delta^*$ by
\[E_q := \BC^\times / q^{\BZ} \simeq \BC / (\BZ + \tau \BZ), \qquad q = \exp(2 \pi i \tau)\]
(In this article we will always take the point of view of multiplicative uniformization of abelian varieties, and use $Z$ to denote the variable in $\BC^\times$). The family $E_q$ degenerates as $q \to 0$ to a copy of $\BC^\times$, and it admits a family of principal polarizations \cite[p. 36]{mumfordAV}, that is, isomorphisms $E_q \simeq E_q^\vee$ which may be induced by the ample line bundle\footnote{To be precise one defines $\LL$ by defining its total space to be the quotient of $\BC^\times \times \BC$ by a suitable action of $q^{\BZ}$. In order to define it, one has to access $q^{1/2}$ in a family over all of $\Delta^*$ --- so we really define $\Delta^*$ to at least be a double cover parametrized by $q^{1/2}$ of the punctured disc parametrized by $q$. For a modular curve, this would be a neighborhood of a cusp of even width.} $\mathscr{L}_q$ on $E_q$ whose global sections are
\begin{equation}\label{analytic-ample}H^0(E_q, \mathscr{L}_q) = \{\text{holomorphic } \vartheta: \BC^\times \to \BC \text{ such that } \vartheta(qZ) = q^{-1/2}Z^{-1}\vartheta(Z)\}.\end{equation}
We consider \eqref{analytic-ample} to be a \emph{Fourier expansion principle} for global sections of the ample line bundle $\mathscr{L}_q$, since it allows one to calculate an explicit Fourier series basis for the global sections of $\LL_q$. In particular, \eqref{analytic-ample} implies that $H^0(E_q, \LL_q)$ is spanned by Jacobi's theta series \[\vartheta_3(q, Z) := \sum_{n \in \BZ}q^{n^2/2}Z^n.\] This expansion around $Z = 0$ can be viewed as a family parametrized by $q^{1/2}$ of Fourier series, because $Z$ is already defined to be \[Z = \exp\left(2 \pi i \left(\text{the additive coordinate on } \BC\right)\right).\]

The main result of this paper is the generalization of \eqref{analytic-ample} in multiple simultaneous directions:
\begin{itemize}
\item The degenerating family $E_q$ is replaced with its arithmetic analog, the Tate curve over $\laurent$; and the higher rank version $E_q^g$ is also considered.
\item The choice of line bundle $\LL$ on $E_q^g$ can be \emph{arbitrary}, so long as it is rigidified: in particular, there is no longer any ampleness restriction.
\item The degree of cohomology can be \emph{arbitrary}: our Fourier expansion principle applies simultaneously to all $H^i(E_q^g, \LL)$.
\end{itemize}

The last two points are well-known in the analytic context --- it is our transfer of them over to powers of the Tate curve in the arithmetic context that is novel. We refer the reader more interested in the full detail of the statement to \Cref{shortresult} and \Cref{veryconvergent}. We also provide translations of two other canonical pieces of structure into the explicit world of the Fourier expansion principle: the theta group action on the higher cohomology of a line bundle (\Cref{heis}), and evaluation morphisms on \emph{equivariant} coherent cohomology (see \Cref{intro2} for the statement, and \Cref{equivariant} for the proof). These other results, and their utility to the algebraic theory of indefinite theta functions \cite{KV} are also briefly outlined later in this introduction, in \Cref{app}. 

Let us now provide simple cases of the second and third points in the analytic setting, in order to show the interest and basic form of such a statement --- the rest of this article is devoted to making them apply in the arithmetic setting described in the first point. The family of elliptic curves $E_q$ admits families of line bundles that are \emph{not ample}. For instance, one can simply take the family of bundles $\LL_q^{-1}$ on $E_q$. Its coherent cohomology can be studied abstractly via \cite[\S 16]{mumfordAV}, but this article is more concerned with explicit computations generalizing \eqref{analytic-ample}, which we now perform in this case. If $\LL_q^{-1}$ had global sections, its global sections would consist of holomorphic $\vartheta: \BC^\times \to \BC$ with the property that $\vartheta(qZ) = q^{1/2}Z\vartheta(Z)$. Considering the effect of $\vartheta \mapsto q^{-1/2}Z^{-1}\vartheta(qZ)$ on formal power series $\vartheta$ in $Z^{\pm 1}$, the only possible such $\vartheta$ would be the na\"ive theta series for the \emph{negative-definite} quadratic form $-X^2$, that is, the series
\[\sum_{n \in \BZ} q^{-n^2/2}Z^{-n}.\] 
For any fixed value of $q^{1/2} \in \Delta^*$, this series does not converge, so it cannot define an actual global section of $\LL_q^{-1}$ --- hence $\LL_q^{-1}$ has no global sections. Instead, it has nonvanishing cohomology in degree $1$. A standard spectral sequence argument based on the vanishing of the coherent cohomology of the structure sheaf of $\BC^\times$ (as $\BC^\times$ is Stein) and the fact that $\LL_q^{-1}$ pulls back under the covering map $\BC^\times \to E_q$ to the holomorphic structure sheaf of $\BC^\times$ endowed with a $q^\BZ$-equivariant structure which acts on sections $f: \BC^\times \to \BC$ by $(q \cdot f)(Z) := q^{-1/2}Z^{-1}f(qZ)$
shows that
\begin{equation}\label{analytic-nonample}H^1(E_q, \LL_q^{-1}) \simeq H^1(q^\BZ, \mathcal{O}(\BC^\times)), \text{ where } (q \cdot f)(Z) := q^{-1/2}Z^{-1}f(qZ).\end{equation}
This generalizes the Fourier expansion principle \eqref{analytic-ample}, because the right hand side of \eqref{analytic-ample} can equally be interpreted as $H^0(q^{\BZ}, \mathcal{O}(\BC^\times))$, where $q$ acts on $\mathcal{O}(\BC^\times)$ via $(q \cdot f)(Z) := q^{1/2}Z^1f(qZ)$. The higher-cohomology version equally deserves the name \emph{Fourier expansion principle}, because the group cohomology $H^1(q^\BZ, \mathcal{O}(\BC^\times))$ admits an explicit basis of elements represented by explicit cocycles whose values are Fourier series. In fact, for this particular action, $H^1(q^\BZ, \mathcal{O}(\BC^\times))$ is spanned by the class of the explicit homogeneous $1$-cocycle
\[(q^{y_0}, q^{y_1}) \mapsto \begin{cases}\sum_{y_0 + \frac{1}{2} < n < y_1 + \frac{1}{2}} q^{-n^2/2}Z^{-n} \text{  if } y_0 \leq y_1 \\
-\sum_{y_1 + \frac{1}{2} < n < y_0 + \frac{1}{2}} q^{-n^2/2}Z^{-n} \text{  if } y_1 \leq y_0 
\end{cases},\] 
where the finiteness of the oriented sums (enabled by the fact that this is an $H^1$ class rather than a global section) salvages the non-convergence of the na\"ive theta series. This explicit construction of cohomology classes is generalized in \cite[\S I]{KV}.  
 
There is an obvious generalization of \eqref{analytic-nonample} (with exactly the same proof as indicated above) for an arbitrary line bundle on an abelian variety. In fact, Zharkov \cite{zharkov} has studied explicit bases of the resulting group cohomology. 

\subsection{Statement of main result}\label{statements}
Fix positive integers $g$ and $w$. 
This article provides a generalization of \eqref{analytic-nonample} in a different direction: as mentioned in the first sentence of this introduction, we prove an analog of it in the case of certain \emph{arithmetic} degenerating families. Although there is a theory of such degenerating families in a large level of generality \cite{FC, lan}, we will content ourselves with studying the arithmetic analog of an analytic situation only slightly more general than that of \eqref{analytic-nonample}. Namely, we consider the analytic family parametrized by $q^{1/w} \in \Delta^*$ whose fiber over $q^{1/w}$ is $E_q^g$. The arithmetic analog of that family is just the $g$-th fibered power of the Tate curve $E_{w, \Tate}$ over $\laurent$. 

The core of this article consists of the following two steps:
\begin{enumerate}
\item[(I)] Following \cite{FC} and \cite{lan}, we will classify the rigidified line bundles $\mathscr{L}$ on $E_{w, \Tate}^g$ in terms of linear-algebraic data analogous to the descent datum for the periods $q^\BZ$ acting on $\BC^\times$ that defines $\LL_q$. 
\item[(II)] We will calculate the coherent cohomology $H^i(E_{w, \Tate}^g, \LL)$ in terms of group cohomology of $\BZ^g = (q^\BZ)^g$ on a certain module of ``very convergent'' power series, where (as in \eqref{analytic-nonample}) the group action of  $\BZ^g$ will depend on the linear algebraic data from (I) that defines $\LL$. In symbols, we prove :
  \begin{equation}\label{symbols} 
H^\bullet\left(E_{w,\mathrm{Tate}}^g,\LL\right)
\simeq
H^\bullet\left(
  \mathbf{Z}^g,
  \begin{gathered}
    \text{a module of convergent power series}\\
    \text{in } Z_1^{\pm 1},\ldots,Z_g^{\pm 1}
    \text{ over } \laurent
  \end{gathered}
\right).\end{equation}
  where the group action on the right hand side depends on $\LL$.  
  This is the claimed Fourier expansion principle for higher cohomology of line bundles on the $g$-th power of the Tate curve, and constitutes the main theorem of this paper. We delay the exact definition for the module of convergent power series in $\underline{Z}^{\pm 1}$ for \Cref{veryconvergent} and \Cref{shortresult}, by which point the definition will be better motivated.  
\end{enumerate} 

The explicit Fourier expansion principle \Cref{symbols} has more utility than the knowledge of the $\laurent$-rank of $H^\bullet(E_{w, \Tate}^g, \LL)$ (which is standard --- see \cite[Lemma 3.2]{KV}). Indeed, it allows one to construct explicit \emph{elements} of $H^\bullet(E_{w, \Tate}^g, \LL)$, which are explicit algebraic theta cocycles near a cusp of a modular curve. This is the point of view taken in \cite{KV}, where the Fourier expansion principle \Cref{symbols} (along with some somewhat more refined information --- see \Cref{app} for details) is used to prove an integral version of Kudla--Millson theory, as well as a version of the theory of algebraic theta functions \cite{mumfordeq, tataiii} that applies to higher cohomology of non-ample line bundles and is more refined than the original theory due to the ability to prove the modularity of specific $q$-expansions (including Kudla--Millson $q$-expansions valued in group cohomology).

We now further explicate both steps in the case $g = 1$, in order to give a clear overall picture of what will take place in the body of this article. The arithmetic replacement in \cite{FC, lan} for the multiplicative uniformization of $E_q$ by $\BC^\times$ is a \emph{relatively complete model} $\Pt$, which is a kind of compactification of $\BG_m$ for which a quotient by the periods ``$q^\BZ$'' may actually be defined. In fact, since we are taking $g = 1$, the standard choice of relatively complete model $\Pt$ and the procedure for quotienting by $q^\BZ$ date back at least to \cite[\S VII]{DR}. For the purposes of this introduction, we omit some features of $\Pt$, but it is relevant for now that $\Pt$ is $\pwr$-scheme containing a Zariski-dense copy of $\BG_{m, \pwr}$ such that the $\laurent$-fiber of $\Pt$ is identified with $\BG_{m, \laurent}$. On the other hand, the $q^{1/w} = 0$ fiber of $\Pt$ is a N\'eron $\infty$-gon: an infinite chain of copies of $\BP^1_{\BZ}$ glued together along $0$ and $\infty$. The scheme $\Pt$ also admits an action of the abstract group of periods $Y := \BZ$ restricting on $\BG_{m, \laurent}$ to the action of $Y$ given by the inclusion $y \mapsto q^{y}$ and the group law of $\BG_m$. A relatively complete model $\Pt$ uniformizes the Tate curve over  $\laurent$ in the following sense: 
\begin{idea}\label{qt}
The formal completion $\Pt^\wedge$ of $\Pt$ along $q^{1/w}$ has a quotient by the action of $Y$, $\Pt^\wedge/Y$, which algebrizes uniquely to a $\pwr$-scheme $P_w$ whose $\laurent$-fiber is precisely the Tate curve. The $q^{1/w} = 0$-fiber of $P_w$ is a N\'eron $w$-gon. 
\end{idea}

The process of taking a relatively complete model, formal completion and algebrization are the workaround for the fact that one cannot literally take the quotient of the scheme $\BG_m$ by the action of $Y$. The algebrization step is provided by Grothendieck's algebrization theorem \cite[Th\'eor\`eme (5.4.5)]{EGAIII}, which requires the auxiliary data of an ample line bundle --- this is discussed in detail in the body of this article. We refer the reader unfamiliar with this story to the extremely satisfying diagrams of \cite[\S 5]{mumford_compositio}. Indeed, the uniformization construction of \Cref{qt} was originally worked out in generality by Mumford in \cite{mumford_compositio}, which was the basis for much of \cite{FC, lan}. 

As we will make explicit in \Cref{degens}, the classification of line bundles on degenerations in \cite{FC, lan} assures us that for all rigidified line bundles $\LL$ on $E_{w, \Tate}$, there is a $Y$-equivariant line bundle $\Lt$ on $\Pt$ inducing (by algebrization following the quotient procedure of \Cref{qt}) $\LL$. That $Y$-equivariant structure on $\Lt$, restricted to $(\BG_m)_{\laurent}$ (where the restriction of $\Lt$ is just the structure sheaf) is the linear-algebraic datum alluded to in (I) --- indeed, it adapts the procedure of uniformizing a line bundle on $E_q$ by an action of the periods $q^\BZ$ on the trivial line bundle on $\BC^\times$ to the quotient procedure of \Cref{qt}. 

It remains to explain (II), where we will for now put the emphasis on an explanation that allows us to arrive at the correct definition of the module of formal power series required in the right hand side of \Cref{symbols} rather than on the exact method of proof that we will end up using. For the purpose of convincingly arriving at the correct answer for that definition, there is a more convenient viewpoint than that of \Cref{qt}:  $\Pt^\wedge$ is a formal model for $\BG_m$ the adic space over $\Spa(\laurent, \pwr)$ --- to avoid confusion, we denote this adic space by\footnote{As opposed to $\BG_m^{\ad}$, to emphasize that we are taking the analytic points of $\BG_m$, which is an adic space over $\Spa(\laurent, \pwr)$ rather than the larger $\Spa(\pwr, \pwr)$.} $\BG_m^{\an}$, and remark that one of Huber's first intended applications of his theory of adic spaces (see \cite[\S 5]{hubergen}) was to create an analytic theory of uniformization of degenerations of abelian varieties over rings like $\pwr$. Indeed, Tate's rigid spaces \cite{tate, BGR} were invented to provide a nonarchimedean analytic theory over the \emph{field} $k((q^{1/w}))$ over which the Tate curve could literally be considered as a quotient of rigid-analytic $\BG_m$ by the periods $q^\BZ$. Then Huber \cite[\S 5]{hubergen} showed (as a special case of his more general results) that the Tate curve produced by means of \eqref{qt} also algebrizes the adic space $\BG_m^\an/Y$ where again the abstract group of periods $Y = q^\BZ$ acts on $\BG_m^{an}$ by multiplication by $q^y$.

Our technique in (II), restricted for this introduction to the case $g = 1$, is to emulate the analytic strategy of using a spectral sequence and a cohomology vanishing result for the uniformizing cover --- namely, a version of Cartan's Theorem B that applies to the adic space $\BG_m^\an$. That version has been proved by Kedlaya--Liu \cite[Theorem 2.6.5]{KL2} using ideas closely related to the proof of Cartan's Theorem B for rigid analytic spaces \cite{kiehl}. One plausible way to deduce a Fourier expansion principle from this cohomology vanishing result is to apply a GAGA result for adic analytifications of proper morphisms of schemes: then the cohomology of a line bundle $\LL$ on $E_{w, \Tate}$ is the same as the cohomology of $\LL^\an$ on $\BG_m^{\an}/Y$, which fits into a spectral sequence
\[E_2^{p, q} = H^p(Y, H^q(\BG_m^{\an}, \mathcal{O})) \Longrightarrow H^{p + q}(\BG_m^\an/Y, \LL^{\an})\]
assuming $\LL^{\an}$ is obtained from the structure sheaf of $\BG_m^\an$ by descent under the $Y$-action.  
The vanishing of the higher cohomology of the structure sheaf of $\BG_m^\an$ (that is, Kedlaya--Liu's Cartan Theorem B) implies the desired arithmetic Fourier expansion isomorphism 
\[H^p(Y, \mathcal{O}(\BG_m^{\an})) \simeq H^p(\text{Tate curve}, \LL)\]
the left hand side of which tells us exactly how to specify the full detail of the Fourier expansion principle for the Tate curve ---  what the module of power series should be on the right hand side of \Cref{symbols}. It should be the global sections of the structure sheaf of $\BG_m^\an$. In concrete terms, the ring of global functions on $\BG_m^\an$ is the $\laurent$-algebra of power series $\sum_{n \in \BZ} a_n Z^n$ with coefficients $a_n \in \laurent$, which are required to converge $q^{1/w}$-adically over \emph{all} of $\BG_m^\an$ considered as an increasing union of affinoid annuli. That requirement is interpreted concretely as requiring
\begin{idea}
  The $q^{1/w}$-adic valuation of $a_n$ goes to $\infty$ as $n \to \pm \infty$, faster than any linear function $n \mapsto xn$, $x \in \BR$ --- in the sense that $v_{q}(a_n) + xn \to \infty$ as $n \to \pm \infty$ for all $x \in \BR$. 
\end{idea}
In general, for the rank-$g$ version, the required module $\mathcal{O}(\BG_m^{\an, g})$ of global sections is defined by
\begin{definition}\label{veryconvergent}
  $\mathcal{O}(\BG_m^{\an, g})$ be the $\laurent$-module of power series in $Z_1^{\pm 1}, \ldots, Z_g^{\pm 1}$ consisting of
  \[\sum_{\chi \in \BZ^g} a_\chi \underline{Z}^\chi\]
  such that for all $x \in \Hom(\BZ^g, \BR)$,
  \[v_q(a_\chi) + \langle x, \chi \rangle \to \infty\]
  as $|\chi| \to \infty$, in the sense that all but finitely many $v_q(a_\chi) + \langle x, \chi \rangle$ are bounded below by any given constant. We call such power series \emph{very convergent}.
\end{definition}

And a choice of rigidified line bundle $\LL$ on the $g$-th power of the Tate curve on $\laurent$ defines a specific action of $\BZ^g$ on $\mathcal{O}(\BG_m^{\an, g})$, after which (by exactly the same argument as above, applied to $\BG_m^{\an, g}$ instead of $\BG_m^\an$) our main result may be concisely stated as the Fourier expansion principle
\begin{equation}\label{shortresult}
H^p(\BZ^g, \mathcal{O}(\BG_m^{\an, g})) \simeq H^p(g\text{-th power of Tate curve}, \LL).
\end{equation}

Here the action of $\BZ^g$ on $\mathcal{O}(\adictorus)$ depends on $\LL$ and is of the form
\begin{equation}\label{finalaction}(\underline{y} \cdot f)(\underline{Z}) :=  f(q^{\underline{y}}\underline{Z}) q^{Q'(\underline{y})}\underline{Z}^{B\underline{y}} \cdot \prod_{j = 1}^g a_j(q^{1/w})^{y_j},\end{equation}
where $Q(x) = \frac{1}{2}x^\top B x$ is a quadratic form on $\BZ^g$ with $B$ a symmetric $g \times g$ $\BZ$-entried matrix; $a_j(q^{1/w}) \in \laurent^\times$ are data that determine the choice of $\LL$; and $Q'$ denotes the integer-valued quadratic function $Q(x) - \frac{1}{2}\sum_i b^\top x$, where $b := (B_{11}, \ldots, B_{gg})^\top$.

Since it requires more input from \cite{FC, lan} or otherwise some theory of descent of coherent sheaves on adic spaces, we delay the full detail of the classification of possible $\LL$ in terms of $Q$ and $a_1, \ldots, a_g$ until \Cref{degens}.  

\subsection{Some ideas of our proof}\label{ideas}

Wanting to minimize the amount of theory of adic spaces that we must use, we do not literally use the technique of proof indicated in \Cref{statements}. Let $g = w = 1$ for simplification again.

Without using adic spaces, one can consider $\Pt^\wedge$ to be an object in the category of ``rigid spaces over $\pwr$'', that is, a category of certain formal $\pwr$-schemes localized by certain blow-ups \cite{BL, bosch, abbes}. Huber \cite[(1.1.12)]{huberbook} \cite[\S 3.9]{regensburg} provided a functor from such a category to a category of adic spaces over $\Spa(\laurent, \pwr)$ which is an equivalence. For that equivalence as literally proved in those works to apply in this setting, one would have to weaken the ``finite type'' hypothesis in \cite{huberbook, hubergen} to ``quasi-paracompact,'' following an argument that already exists in the rigid analytic setting \cite{bosch}. This is because $\Pt$ is not of finite type over $\pwr$, being an infinite union of affine schemes that are individually formal models for annuli in $\BG_m^{\an}$. Rather than establish such an equivalence, we simply use the fact that it is likely true to motivate the idea that we may work directly with the formal models and the procedure \Cref{qt} following \cite{FC, lan} to prove the result \Cref{shortresult}. This is in contrast to the fact that the most expedient way to motivate the statement of \Cref{shortresult} was purely in terms of adic spaces.

Working directly with a relatively complete model $\Pt$, we will apply formal GAGA from \cite[Th\'eor\`eme (4.1.5)]{EGAIII} to equate the base-change-to-$\laurent$ of the cohomology of $\LL$ with the same base change of the cohomology of $\LL^\wedge$ on $\Pt^\wedge/Y$. We will show that this cohomology is converged to by a spectral sequence with $E_0$-page
\begin{equation}\label{ssintro}C^p(Y, \laurent \otimes (\text{completed \v Cech complex for  }\Pt))\end{equation}
and vertical differentials. 
We stress that the informal notation \[\laurent \otimes (\text{completed \v Cech complex for  }\Pt)\] that we use for now to shorten equations should be understood to mean the complex whose terms take the form $\prod_{\underline{i}} \laurent \otimes \Gamma(U_{\underline{i}}, \Lt)^\wedge$. Contrary to what is easy to express in natural language, base-change to $\laurent$ should occur on the inside, not the outside, of the infinite product.

We will explicitly identify the inner complex $\laurent \otimes (\text{completed \v Cech complex for  }\Pt)$ with a \v Cech complex that computes the cohomology of the structure sheaf on $\BG_m^{\an, g}$ --- which allows us to employ the higher cohomology vanishing result \cite[Theorem 2.6.5]{KL2} as above to conclude \eqref{shortresult}. The advantage of this version of the method is that the argument can be made to work with minimal input from the theory of adic spaces --- the only time we use them is when invoking the cohomology vanishing result from \cite{KL2}, but this requires no additional machinery besides explicitly identifying \v Cech complexes on the formal and adic sides, and could easily be replaced at the cost of additional notation by copying the proof of \cite{KL2} to directly prove the vanishing of the higher cohomology of the inner complex.

\subsection{Why, and how to invert $q^{1/w}$ ?}\label{invertq}
In the previous paragraphs, all of the calculations are made \emph{after passing from } $\pwr$ \emph{to} $\laurent$, that is, away from the locus $q^{1/w} = 0$ in the proper $\pwr$-scheme $P_w$ algebrizing $\Pt^\wedge/Y$. Recall that the $q^{1/w}$-fiber of $P_w$ is a N\'eron $w$-gon, identified as the quotient by $Y$ of the N\'eron $\infty$-gon that is the $q^{1/w}$-fiber of $\Pt$. In the logic of the proof described above, this restriction is important because the nonarchimedean function theory used to prove Cartan's Theorem B in \cite{KL2} does not go through without first inverting $q^{1/w}$. 

If $q$ is not inverted, it is not just that our \emph{proof} fails. The proper $\pwr$-scheme $P_w$ whose cohomology we would like to calculate would now depend not only on the choice of $w, g$, but also on the choice of relatively complete model $\Pt$. Once $\Pt$ is chosen, there is no reason for the spectral sequence calculating the cohomology (the one from \Cref{ssintro} but without the base change to $\laurent$) to have second page concentrated in the first row and give a result resembling \Cref{shortresult}.  Indeed, let us take the standard choice of $\Pt$ described in \Cref{statements}, and let $\mathscr{L}$ be an ample line bundle on $E_{w, \Tate}$ inducing a principal polarization, with model $\Lt$ on $\Pt$, so that $\Lt^{-1}$ is a model for the non-ample line bundle $\LL^{-1}$.  The problem is that the higher cohomology of $\Lt^{-1}$ on the $q = 0$ fiber $\widetilde{P}_0$ does not vanish, as we now prove. The restriction of $\Lt^{-1}$ to $\Pt_0$ is obtained by gluing together copies of $\mathscr{O}(-1)$ on each copy of $\BP^1_\BZ$ making up $\widetilde{P}_0$. Reflecting this, there is an exact sequence
\[0 \to H^0(\widetilde{P}_0, \Lt^{-1}) \to \prod_{n \in \BZ} H^0(\BP^1, \mathcal{O}(-1)) \to \prod_{n \in \BZ} \BZ \to H^1(\widetilde{P}_0, \Lt^{-1}) \to \prod_{n \in \BZ} H^1(\BP^1, \mathcal{O}(-1)) \to 0,\]
from which one obtains the isomorphism
\[0 \to \prod_{n \in \BZ} \BZ \overset{\sim}{\to} H^1(\widetilde{P}_0, \Lt^{-1}) \to 0\]
and the fact that $H^1(\Pt_0, \Lt^{-1}) \neq 0$ (in fact it is infinite-rank). By vanishing of $H^2$, this implies that the \v Cech complex on the inside of \Cref{ssintro} does not have vanishing cohomology in degree $q = 1$. 

It is interesting to note that this phenomenon does \emph{not} occur when dealing with $\LL$ instead of $\LL^{-1}$: the relevant exact sequence is then
\[0 \to H^0(\widetilde{P}_0, \widetilde{L}) \to \prod_{n \in \BZ} H^0(\BP^1, \mathcal{O}(1)) \to \prod_{n \in \BZ} \BZ \to H^1(\widetilde{P}_0, \widetilde{L}) \to \prod_{n \in \BZ} H^1(\BP^1, \mathcal{O}(1)) \to 0,\]
which is
\[0 \to H^0(\widetilde{P}_0, \widetilde{L}) \to \prod_{n \in \BZ} (\BZ x_0 + \BZ x_1) \to \prod_{n \in \BZ} \BZ \to H^1(\widetilde{P}_0, \widetilde{L})\to 0,\]
where the map $\prod_{n \in \BZ} (\BZ x_0 + \BZ x_1) \to \prod_{n \in \BZ} \BZ$ is by sending the tuple of linear forms $(a_n x_0 + b_n x_1)_n$ to the tuple $(b_n - a_{n+1})_n$. This is surjective, so $H^1(\widetilde{P}_0, \widetilde{L}) = 0$. It is true that the ample version of the Fourier expansion principle established by Faltings--Chai \cite[Ch. III, Theorem 6.2]{FC} (see also \cite[Corollary 4.5.4.24]{lan}) still requires one to pass to the generic fiber of a degeneration before taking cohomology; it would be interesting to see if versions of their result or ours could be established without inverting $q^{1/w}$ in some cases.

The need to invert $q^{1/w}$ introduces another technical issue related to ``unbounded denominators'': tensoring a \v{C}ech complex of infinite products of $\pwr$-modules by $\laurent$ does not commute with the infinite products. We bypass this issue by carefully using a fundamental domain for the action of $Y$ where the \v{C}ech complex consists only of finite products; much of this is conveniently absorbed into the use of Shapiro's lemma. It is for this reason that we must be so careful in the definition of the double complex \Cref{ssintro} to make sure that the base change to $\laurent$ occurs inside all of the infinite products. 

\subsection{Relationship to Faltings--Chai's Fourier expansion principle}\label{periods-recover}

In the general context of degenerations of abelian varieties, Faltings--Chai proved the special case $i = 0$ of our main result \eqref{shortresult} --- we have already referenced this appearing in \cite[Ch. III, Theorem 6.2]{FC} and in \cite[Corollary 4.5.4.24]{lan} in a form more similar to our statement. In order to make clear the relationship between their proof and ours, we now briefly explain their technique. Let $G/\pwr$ be the connected component of the smooth locus of the Tate curve over $\pwr$, and let $\LL$ be a rigidified ample line bundle on $G$. In particular, the $q^{1/w}=0$ fiber of $G$ is just a copy of $\BG_m/\BZ$. Then Faltings--Chai prove the $i = 0$ case of \eqref{shortresult} by taking the formal completion of $G$ along $q^{1/w} = 0$. Being a formal deformation of $\BG_m$, that formal completion is automatically $\BG_m / \Spf(\pwr)$. Therefore, global sections of $\LL$ automatically induce sections of the structure sheaf of $\BG_m / \Spf(\pwr)$, that is, elements of the $q^{1/w}$-adic formal completion of $\pwr[Z, Z^{-1}]$. So any global section $s$ of $\LL$ is identified via this procedure with its Fourier expansion
\[\widehat{s} = \sum_{\chi} a_\chi Z^\chi, \qquad a_\chi \to 0 \,\,\,\,\,\,q\text{-adically as } \chi \to \infty.\]
Similarly, global sections of $\LL$ over the base change to $\laurent$ induce formal power series in $Z, Z^{-1}$ with coefficients in $\laurent$ whose coefficients go to zero $q$-adically. Faltings--Chai \cite[Ch. II]{FC} use the addition formulae for algebraic theta functions to prove that the Fourier expansions associated to global sections over the base change to $\laurent$ are exactly the power series as above satisfying a certain quasiperiodicity condition amounting to being fixed by a certain action of $Y = q^\BZ$. That is, they prove exactly the statement \Cref{shortresult} in the case where $\LL$ is ample and $i = 0$. This method is highly sensitive to the fact that one is considering \emph{global sections} and not higher cohomology, which is why the method described in \Cref{ideas} at least seemed necessary despite the previous result of Faltings--Chai.

It is worth noting that their Fourier expansion principle for global sections was a key step in establishing their classification of line bundles on degenerations with ample generic fiber, which we use in step (I) described in \Cref{statements}. But their strategy for extending that equivalence to arbitrary line bundles was just to extend linearly from the ample case, so they had no need for any analog in the case of higher cohomology. And in any event, the main purpose of their classification of degenerations at the time of \cite{FC, lan} was to construct arithmetic compactifications of PEL Shimura varieties --- so they only needed to classify degenerations of ample line bundles. Due to arithmetic applications that are surfacing (such as the algebraic theory of indefinite theta functions \cite{KV} and the general principle appearing in \cite{KS, BCG, kopp2} that higher equivariant cocycles are relevant to problems in arithmetic), the higher cohomology version has now become relevant.

\subsection{Further results also present in this article, and applications to arithmetic}\label{app}
We also provide a number of auxiliary results related to how various other canonically-defined structures interact with our Fourier expansion principle.
\begin{enumerate}
\item Line bundles $\LL$ on the abelian schemes $E_{w, \Tate}^g/\laurent$ acquire additional structure (as they do for general abelian schemes), namely a \emph{theta group scheme} $\mathcal{G}(\LL)$ consisting of automorphisms of $\LL$ covering translations on $E_{w, \Tate}^g$. The group $\mathcal{G}(\LL)$ naturally acts on the cohomology of $\LL$. In the case where $\LL$ is symmetric and $Q$ is nondegenerate, we calculate an explicit model for this action.
\item When $\LL$ is defined by twisting a given symmetric rigidified line bundle on $E_{w, \Tate}$ by a quadratic lattice $Q$ as in \cite[\S 3]{KV}, it admits a canonical equivariant structure for any subgroup $\Gamma$ of the orthogonal group of $(Q, \BZ^g)$, covering the natural action of $\Gamma$ on $E_{w, \Tate}^g$. In that case, we extend our Fourier expansion principle to also apply to the $\Gamma$-equivariant coherent cohomology of $\LL$.
\item One advantage of considering the $\Gamma$-equivariant cohomology of a line bundle is that it offers a way to evaluate coherent cohomology classes for $\LL$: even in degree $i > 0$, such an evaluation produces a group cohomology class for $\Gamma$, as opposed to zero. In symbols, there is an evaluation map
  \[H^i_\Gamma(E_w^g, \LL) \to H^i(\Gamma, \laurent),\]
  as $\Gamma$ acts trivially on the rigidification. 
  Under the $\Gamma$-equivariant Fourier expansion principle that we will prove, we provide the exact explicit interpretation of the evaluation map. 
\end{enumerate}

All three of these extra points are used in \cite{KV} to prove an algebraic version of the Kudla--Millson theory of theta functions for signature-$(r, s)$ indefinite quadratic forms $Q$ \cite{kudlamillson}, with coefficients in $H^s(\Gamma, \BZ[1/(2\text{disc}(Q))])$, thus algebrizing what Kudla--Millson constructed with coefficients in $H^s(\Gamma, \BC)$. (1) is used there to study the monodromy of a deprojectivized algebraic version of the heat equation; (2) is used to produce explicit $\Gamma$-equivariant cohomology classes which are globalized by means of trivializing the monodromy of the heat equation, and (3) is used in order to evaluate the aforementioned cocycles along the identity section and calculate explicitly the resulting modular form valued in the cohomology of $\Gamma$, so that it may be compared with the ones defined by \cite{kudlamillson}.
\subsection{Outline of this article} \Cref{notation} contains a summary of notation to be used in this article, for convenience.

In \Cref{ana}, we review the analytic theory of Fourier expansions in higher cohomology, to make explicit the analytic analog of the general situation considered in this article, including the classification of line bundles. This section contains well-known material and is not logically necessary for what follows, but might be helpful for orienting the reader. 

In \Cref{intro2}, we give the definition of the Kuga--Sato degenerations over $\pwr$ including the line bundles $\LL$ on them; and we restate our Fourier expansion principle \Cref{shortresult} for them, in the notation that we set up in that section. 

In \Cref{degens}, we recall the general theory of the arithmetic degenerating families considered by Faltings--Chai, including the $i=0$ ample case of \Cref{shortresult} that they proved, and how it was used to classify the degenerating families that they considered. We also show how the Kuga--Sato degenerations and line bundles on them fit into that classification, thus certifying that the Fourier expansion principle stated in \Cref{intro2} exhausts all the rigidified line bundles on Kuga--Sato degenerations. 

\Cref{nonequiv} contains the proof of the promised Fourier expansion principle \Cref{shortresult}, and \Cref{equivariant} contains an augmented version of the proof that upgrades the result to a Fourier expansion principle for equivariant cohomology as promised in \Cref{app}.

Finally, \Cref{heis} contains the explicit detail of how the cohomology provides a model of the Stone--von Neumann representation of the relevant finite Heisenberg group, as also promised in \Cref{app}.

\section*{Acknowledgements} I thank Akshay Venkatesh, Ahmed Abbes, Brian Conrad, and Kai-Wen Lan for various useful conversations about the ideas in this work.
\section*{AI use disclosure} AI was used to check for mistakes and to generate all of the diagrams in this article. 
\section{Notation}\label{notation} 
We have already introduced much of this notation in \Cref{intro}, but we store all of our conventions here for the convenience of the reader.
\begin{itemize}
\item $\adictorus$ denotes the base-change to $\Spa(\laurent, \pwr)$ of the $\pwr$-scheme $\BG_m^g$, in the sense of \cite{hubergen}.
\item $E_{w, \Tate}$ denotes the Tate curve over $\laurent$
\item $\Gt$ is a rank-$g$ split torus over $\pwr$ with character lattice $X \simeq \BZ^g$. 
\item $Y \simeq \BZ^g$ is an abstract lattice of periods admitting an inclusion $i: Y \to \Gt_{\laurent}$ given by $(y_1, \ldots, y_g) \mapsto (q^{y_1}, \ldots, q^{y_g})$.
\item $\widetilde{P}$ is a $\pwr$-scheme defining a relatively complete model for $(\Gt, i)$ as in \cite{FC, lan}. In later sections, it will be constructed as a toric scheme and it will depend on a choice of fan $\Sigma$. So it will be denoted by $\widetilde{P}_{\Sigma}$ or with other subscripts if $\Sigma$ depends on something else.
\item $\widetilde{\LL}$ and $\widetilde{\mathscr{N}}$ denote line bundles on $\widetilde{P}$. We always require that $\widetilde{\mathscr{N}}$ be ample in the sense of \cite{mumford_compositio}.
\item $P_{g, w}$ is the projective $\pwr$-scheme algebrizing $\widetilde{P}^\wedge/Y$, which may also depend on $\widetilde{P}$. Its $\laurent$-fiber is precisely $E_{w, \Tate}^g$. 
\item $B$ is a symmetric $g \times g$ $\BZ$-entried matrix, and $Q(\underline{x}) = \frac{1}{2}\underline{x}^\top B \underline{x} $ is the corresponding quadratic form on $\BZ^g$. The notation $Q'$ denotes the integer-valued quadratic function $Q(x) - \frac{1}{2}b^\top x$, where $b := (B_{11}, \ldots, B_{gg})^\top$. 
\item $\underline{a} = (a_1, \ldots, a_g) \in (\laurent^\times)^g$ is a tuple of Laurent series. 
\item $\LL^{Q, \underline{a}}$ is a line bundle on $E_{w, \Tate}^g$ that depends on the choice of $B$ and $\underline{a}$.
\item $\mathcal{O}(\adictorus)$ is the ring of very convergent power series in $Z_1^{\pm 1}, \ldots, Z_g^{\pm 1}$ over $\laurent$ as defined in \Cref{veryconvergent}. 
\end{itemize}
\section{Recap of the analytic theory}\label{ana}

A general polarized abelian variety over $\BC$ of dimension $g$ is of the form $V_\Omega = \BC^g/(\BZ^g + \Omega \BZ^g) \cong (\BC^\times)^g/Y$, where $\Omega$ is a symmetric $g \times g$ matrix with positive-definite imaginary part, and $Y$ is the relevant lattice of periods in $(\BC^\times)^g$, which is generated by the $g$ periods \[(\exp(2 \pi i \Omega_{1, i}), \ldots, \exp(2 \pi i \Omega_{g, i})), \qquad i = 1, \ldots, g.\] The Appell--Humbert theorem classifies the line bundles $\mathscr{L}$ on $V_\Omega$ in terms of certain multilinear algebra data, where the strategy of the proof (at least in \cite[Ch. 1]{mumfordAV}) is to pull an arbitrary line bundle $\mathscr{L}$ on $V$ to the universal cover $\BC^g$, where it must be trivial because $H^1(\BC^g, \mathcal{O}^\times) = 1$, and then classify the possible $(\BZ^g + \Omega \BZ^g)$-equivariant structures on the holomorphic structure sheaf on $\BC^g$. There is in general an obvious degenerating family of polarization-inducing line bundles $\mathscr{L}$ given on $V_\Omega$ (as the period matrix $\Omega$ itself degenerates in some direction) via the Appell--Humbert theorem by the hermitian matrix $(\mathrm{Im} \Omega)^{-1}$. 

Let $w, g$ be positive integers. This article only considers the arithmetic analog of the degenerating family of dimension-$g$ abelian varieties given by $E_q^g := (\BC^\times/q^\BZ)^g = (\BC^\times)^g/(q^{\BZ})^g$, parametrized by $q \to 0$ in a small punctured disc $\Delta^*$ around zero. Away from $q = 0$, $E_q^g$ is the $g$-th power of the elliptic curve $\BC^\times/q^{\BZ}$, and it degenerates to the torus $(\BC^\times)^g$ at $q = 0$. In this complex-analytic setting, our basic object of study is a family of line bundles on the degenerating family $(\BC^\times)^g/(q^\BZ)^g$, $q \in \Delta^*$, with $q \to 0$; though we also permit studying families defined over the finite covers of $\Delta^*$ parametrized by $q^{1/w}$, which we abusively also call $\Delta^*$. In this section we will perform the steps (I) and (II) of \Cref{intro} in this analytic context, thus providing the full generality version of \eqref{analytic-ample} and \eqref{analytic-nonample}. None of this is new, but its presence here is intended to set the stage for the arithmetic version later on. 

As a consequence of the Appell--Humbert theorem \cite[Ch. 1]{mumfordAV}, for generic values of $q$, the line bundles on $E_q^g = (\BC^\times)^g/(q^\BZ)^g = \BC^g/(\BZ^g + \mathrm{diag}(\tau, \ldots, \tau)\BZ^g)$ are classified up to $\Pic^0$-twist by quadratic forms $Q \in \Sym^2(\BZ^g)^\vee = \mathrm{NS}(E_q^g)$. We take the convention that an element of $\Sym^2(\BZ^g)^\vee$ is represented equivalently by a symmetric $g \times g$ $\BZ$-entried matrix $B = (b_{ij})$, a symmetric pairing $\langle \underline{x}, \underline{y}\rangle_B = \sum_{i, j} b_{ij}x_iy_j$ on $\BZ^g$, and a quadratic form $Q(\underline{x}) = \langle \underline{x}, \underline{x} \rangle_B/2 = \frac{1}{2}\sum_{i, j} b_{ij}x_ix_j = \frac{1}{2}\sum_{i = 1}^g b_{ii} x_i^2 + \sum_{i < j} b_{ij} x_ix_j$ valued in $\frac{1}{2}\BZ$. 

We will modify $Q$ by a linear form so that it is guaranteed to be integer-valued regardless of $B$. Namely, we will take the quadratic function $Q': \BZ^g \to \BZ$ given by
\[Q'(\underline{y}) := Q(\underline{y}) - \frac{1}{2}\sum_{i = 1}^g b_{ii}y_i.\]

The specific form that the Appell--Humbert classification takes in this case is that for generic values of $q$, the line bundles on $E_q^g$ with Chern class $Q \in \mathrm{NS}(E_q^g)$ are $\Pic^0$-twists of 
\[\mathscr{L}_{q}^Q := ((\BC^\times)^g \times \BC)/(q^\BZ)^g,\]
where the $(q^\BZ)^g$-action on $(\BC^\times)^g \times \BC$ depends on $Q$ and is given by
\[(q^{y_1}, \ldots, q^{y_g}) \cdot (\underline{Z}, t ) := ((q^{y_1}Z_1, \ldots, q^{y_g}Z_g), q^{-Q'(\underline{y})}\underline{Z}^{-B(\underline{y})}t).\]
Here the notation $\underline{Z}$ and $\underline{y}$ stand for $(Z_1, \ldots, Z_g) \in (\BC^\times)^g$ and $(y_1, \ldots, y_g) \in \BZ^g$, respectively. We now impose the additional hypothesis that \emph{the line bundle pulls back to the structure sheaf} of $(\BC^\times)^g$, since this assumption will always hold in the arithmetic context and therefore allow us to produce formulas in the analytic setting that are most relevant. The analysis of this section could be done without this by pulling back to the universal cover $\BC^g$, as is done in \cite{zharkov}. 

Under the triviality restriction on the pullback, the Appell--Humbert theorem also provides the explicit detail of how to twist by a $\Pic^0$ bundle that is trivial upon pullback to $(\BC^\times)^g$, which corresponds to a choice of $\Hom((q^\BZ)^g, \BC^\times)$, that is, a collection $a_1, \ldots, a_g \in \BC^\times$. The resulting twisted line bundle is given by
\[\mathscr{L}^{Q, \underline{a}}_q := ((\BC^\times)^g \times \BC)/(q^\BZ)^g,\]
where the action is by
\begin{equation}\label{action-general}q^{\underline{y}} \cdot (\underline{Z}, t) := (q^{\underline{y}}\underline{Z}, a_1^{-y_1} \cdots a_g^{-y_g} q^{-Q'(y)}\underline{Z}^{-B(y)}t).\end{equation}
Note that since we used $Q'$ instead of $Q$, this line bundle gets to vary over the family $E_q^g$ where $q$ itself parametrizes the punctured disc $\Delta^*$, if $\ua$ varies over that disc. Or more generally, it varies over the family $E_q^g$ where $q^{1/w}$ parametrizes the disc, if $\ua$ varies over that disk. If all $a_i = 1$ then we just denote the line bundle by $\LL^Q_q$.

The space of global sections $H^0(E_{q}^g, \mathscr{L}_{q}^{Q, \underline{a}})$ is canonically isomorphic to the vector space of holomorphic functions $f \in \mathcal{O}((\BC^\times)^g)$ satisfying the quasiperiodicity condition
\[f(q^{y_1}Z_1, \ldots, q^{y_g}Z_g) = \langle \underline{a}, \underline{y} \rangle^{-1} q^{-Q'(\underline{y})}\underline{Z}^{-B(\underline{y})}f(\underline{Z}) \qquad \text{ for all } \underline{y} \in \BZ^g.\]
In other words, the $i = 0$ case of the Fourier expansion isomorphism (generalizing \eqref{analytic-ample} to a general line bundle on $E_q^g$ trivial on $(\BC^\times)^g$) is established:
\begin{equation}\label{fourierexp-definite} H^0(E_q^g, \mathscr{L}_{q}^{Q, \ua}) \cong H^0((q^\BZ)^g, \mathcal{O}((\BC^\times)^g)),\end{equation}
where the lattice of periods $(q^\BZ)^g$ acts on $\mathcal{O}((\BC^\times)^g)$ via 
\begin{equation}\label{period-action}((q^{y_1}, \ldots, q^{y_g}) \cdot f)(\underline{Z}) := \langle \underline{a}, \underline{y} \rangle q^{Q'(\underline{y})}\underline{Z}^{B(\underline{y})}f(q^{y_1}Z_1, \ldots, q^{y_g}Z_g).\end{equation}
Note that we have established \eqref{fourierexp-definite} regardless of whether $\mathscr{L}_q^{Q, \underline{a}}$ is ample. If $Q$ is positive-definite, then $\mathscr{L}^{Q, \underline{a}}_q$ is ample and induces a polarization $E_q^g \to (E_q^g)^\vee$. An explicit basis of size $\disc := |\det B|$ for $H^0(E_q^g, \mathscr{L}^Q_{q})$ is classically constructed using the \emph{theta functions with characteristic} associated to the quadratic form $Q$ --- see for instance \cite[\S 6]{tataiii}. For example
\[\vartheta(\underline{Z}) := \sum_{\underline{\lambda} \in \BZ^g} q^{Q'(\underline{\lambda})}\underline{Z}^{B(\underline{\lambda})} \in H^0(E_q^g, \mathscr{L}^Q)\] 
is a Fourier expansion in the $\underline{Z}$-variables with coefficients in $\BC(\!( q )\!)$, providing a formula for a valid \emph{family} of elements of $H^0(E_q^g, \mathscr{L}^{Q}_{q})$ for $q \in \Delta^*$. An important arithmetic application of this function is that (at least if $Q'$ is replaced with $Q$) if it is evaluated along the identity section $\underline{Z} = (1, \ldots, 1)$, one obtains a modular form of weight $g/2$ whose $q$-expansion is a generating function for the number of ways to represent integers by $Q'$ --- that is the idea behind the proof of modularity of the integral Kudla--Millson series in \cite{KV}.

The analytic analog of \Cref{periods-recover} --- that the period lattice underlying $E_q^{g}$ is $(q^\BZ)^g$ can be recovered from the abstract abelian variety $E_q^g$ --- is much more classical, as the period lattice can be viewed as the non-real part of the kernel of the exponential map from the universal cover $\BC^g$. But a key observation of Faltings--Chai that underscores the importance of their Fourier expansion principle in the ample case is that the periods can be recovered directly from the coefficients of $\vartheta$. The mechanism for this is as follows. The line bundle $\mathscr{L}_q^Q$ induces the polarization $E_q^g \to (E_q^g)^\vee$ which is given on the uniformizing tori by the map $(\BC^\times)^g \to (\BC^\times)^g$ corresponding to the map $B: Y \to X$, where $Y \simeq \BZ^g$ is the character lattice of the torus uniformizing $(E_q^g)^\vee$ and $X \simeq \BZ^g$ is the character lattice of the torus uniformizing $E_q^g$ and both have been identified with $\BZ^g$ using the standard coordinates on $(\BC^\times)^g$. More concretely (but equivalently), the polarization is uniformized by the map of tori $(\BC^\times)^g \to (\BC^\times)^g$ given by
\[(Z_1, \ldots, Z_g) \mapsto (Z_1^{b_{1i}} \cdots Z_g^{b_{gi}})_{1 \leq i \leq g}.\]
The exact lattice of periods $(q^{\BZ})^g$ that one quotients by to get $E_q^g$ and $(E_q^g)^\vee$ can be obtained \emph{just from looking at the coefficients} of $\vartheta$ and the other nonzero element of $H^0(E_q^g, \mathscr{L}^Q_q)$, in the following way. For the coefficient in front of the character $\underline{Z}^{B(\underline{\lambda})}$ (where now $\underline{\lambda}$ can be viewed as an element of $Y$ so that $B(\underline{\lambda}) \in X$, and $\underline{Z}^{B(\underline{\lambda})}$ is just notation for it being viewed as a character of the torus with character lattice $X$) in the definition of the series defining $\vartheta$ being denoted by
\[\sigma_{\chi}(\vartheta) := q^{Q'(\underline{\lambda})}, \qquad \chi := B(\underline{\lambda})\]
we have
\begin{align}\sigma_{\chi + B(\underline{y})}(\vartheta) &= q^{Q'(\underline{\lambda} + \underline{y})}\\
  &= \sigma_{\chi}(\vartheta) q^{Q'(\underline{y})} q^{\langle \underline{\lambda}, \underline{y} \rangle_B}\\
  &= \sigma_{\chi}(\vartheta) q^{Q'(\underline{y})} q^{\chi_1 y_1 + \cdots \chi_g y_g}, \label{fourierpairing}
\end{align}
from which one extracts the pairing $Y \times X \to \BC^\times$ given by $(y, \chi) \mapsto q^{\chi_1 y_1 + \cdots \chi_g y_g}$ thus defining the period lattice $(q^{\BZ})^g \subset (\BC^\times)^g$. This pairing becomes fully-defined once also taking into account the other thetas with characteristic.

The definition of $\mathscr{L}^{Q, \underline{a}}_{q}$ and the Fourier expansion principle \Cref{fourierexp-definite} are perfectly valid if $Q$ is not positive-definite --- it's just that $\mathscr{L}_{q}^{Q, a}$ will not necessarily have any nonzero global sections, as one can readily deduce from \Cref{fourierexp-definite} itself. In fact, if $Q$ is nondegenerate and of signature $(r, s)$, then $\mathscr{L}^{Q, \underline{a}}_{q}$ still induces an isogeny\footnote{which does not get to be called a ``polarization'' if $s \geq 1$, as it does \emph{not} come from an ample line bundle.} $E_q^g \to (E_q^g)^\vee$, and it has nonvanishing cohomology only in degree $s$ \cite[\S 16]{mumfordAV}. In general, the Grothendieck spectral sequence and Cartan's Theorem B, which yields the vanishing of the higher cohomology of the pullback of $\mathscr{L}^{Q, a}_{q}$ to $(\BC^\times)^g$, show that (\ref{fourierexp-definite}) generalizes to the Fourier expansion isomorphism for higher cohomology
\begin{equation}\label{fourierexp-indefinite}
H^i(E_q^g, \mathscr{L}_{q}^{Q, a}) \cong H^i((q^\BZ)^g, \mathcal{O}((\BC^\times)^g))
\end{equation}
for all $i \geq 0$, $q \in \Delta^*$, where the action of the periods is as already defined in \Cref{period-action}. Explicit bases for the right hand side of (\ref{fourierexp-indefinite}) were produced by Zharkov \cite{zharkov}, who also used the Poisson summation formula to prove modularity properties for them in global families. In \cite{KV}, we prove an arithmetic variant of that with Poisson summation replaced with an algebraic version of the heat equation, using the arithmetic variant of \Cref{fourierexp-indefinite} proved in this article.

\section{Kuga--Sato degenerations over $\pwr$, and statements of main results}\label{intro2}

Fix positive integers $w$ and $g$, a quadratic form $Q = \frac{1}{2}\sum_{i, j} b_{ij}x_ix_j$ on $\BZ^g$, and a collection $a_1, \ldots, a_g \in \laurent^\times$. We also fix the notation
\[Q'(x) := Q(x) - \frac{1}{2}\sum_i b_{ii} x_i\]
which is a quadratic function $\BZ^g \to \BZ$ differing from $Q$ by a linear functional. 

This article is about the computation of $H^i(G_{\laurent}, \mathscr{L}_{\laurent}^{Q, \ua})$, where $G$ is a $\pwr$-group scheme playing the role analogous to the analytic family $E_q^g$ of \Cref{intro} and \Cref{ana}, and $\mathscr{L}^{Q, \ua}$ is a line bundle on $G$ rigidified along the identity section, analogous to the family $\mathscr{L}^{Q, \ua}_q$ of \Cref{ana}. 

In this section we give a suitable definition of $G$ and $\mathscr{L}^{Q, a}$, and we state the main results of this article in full detail. This fills in the gap in \Cref{intro} where it was not explained what the relationship between the data $Q$ and $\ua$ and the line bundle $\LL$ were in the statement \Cref{shortresult} of our main result. 

\subsection{Definition of the Kuga--Sato degenerations}
The scheme $\Spec(\laurent)$ replaces the un-punctured disk $\Delta^*$, as $\pwr$ is the defining ring of a formal neighborhood of a width-$w$ cusp of a modular curve, at least before possible base-change to $\BZ[1/N]$ or $\BZ[1/N, \zeta_N]$. Let $\Tate_w$ be the Tate curve over $\pwr$ --- the one that restricts over $q^{1/w} = 0$ to a N\'eron $w$-gon over $\BZ$ \cite[\S VII, D\'efinition 1.16]{DR}. By taking the connected component of the smooth locus of $\Tate_w$, one obtains $G_w$, which agrees with $\Tate_w$ over $\laurent$ and degenerates to $\BG_{m, \BZ}$ over $q^{1/w} = 0$. We will consider the degeneration
\begin{equation}\label{KS}G := G_w^g\end{equation}
over $\pwr$. It plays the role of $E_q^g$ from \Cref{intro}, since over $\laurent$ viewed as a punctured formal neighborhood of a width-$w$ cusp of a modular curve, it is the fibered $g$-th power of the universal elliptic curve; and over $q^{1/w} = 0$ it degenerates to a rank-$g$ split torus $\BG_{m/\BZ}^g$.

\begin{definition}
The \emph{Kuga--Sato degeneration} of width $w$ and rank $g$ is the $\pwr$-scheme $G$ defined by \Cref{KS}. 
\end{definition}

The fibered power of the Tate curve $E_{w, \Tate}^g$ is nothing but the base-change of $G$ from $\pwr$ to $\laurent$.

\subsection{Definition of the line bundles}
Let $Q = \frac{1}{2}\sum b_{ij}x_ix_j$ be a quadratic form on $\BZ^g$, with $B = (b_{ij})$ a symmetric $\BZ$-entried matrix. The idea of the construction of the relevant line bundle on $G$ corresponding to $Q$ is that one ``twists by $Q$'' a basic symmetric ample line bundle $\LL$ on $G_w$. In particular, $\LL$ should be chosen so that it provides a principal polarization of $E_{w, \Tate}$. The twisting procedure is the same one relevant to \cite[\S 6]{tataiii}, and is also explained in \cite[\S 3]{KV}. We now explain both of these points. 

Define the symmetric rigidified line bundle $\mathscr{L}_{\laurent} = \mathcal{O}_{E_{w, \Tate}}([-1])$. Here the relative Cartier divisor\footnote{We refer to \cite{KM} for the notion of relative Cartier divisor. Later on this will be reconciled with the viewpoint from Mumford/Faltings--Chai.} $[-1]$ in $E_{w, \Tate}$ is defined to be the image of the copy ``$-1$'' of $\Spec(\laurent)$ in the group scheme $\mu_2$ under the inclusion 
\[\mu_2 \to E_{w, \Tate}[2]\]
from \cite[Ch. VII, (1.16.3)]{DR}.
The line bundle $\mathscr{L}_{\BZ((q^{1/w}))}$ on $E_{w, \Tate}$ is the analog of the line bundle on $\BC^\times/q^{\BZ}$ given previously in \Cref{ana} by
\[\mathscr{L}_q^{x^2} := (\BC^\times \times \BC)/q^{\BZ},\]
where we recall that the action is by
\[q^y \cdot (Z, t) := (q^yZ, q^{-\frac{y^2 - y}{2}}Z^{-y}t),\]
since $H^0(\BC^\times/q^\BZ, \mathscr{L}_{q})$ is spanned by the single global section 
\[\sum_{n \in \BZ} q^{\frac{n^2 - n}{2}}Z^n,\] 
whose only zero in $\BC^\times/q^{\BZ}$ is $Z = -1$ (hence why the family $\mathscr{L}_{q}^{x^2}$ over $\Delta^*$ is the complex analytic analog of $\LL_{\laurent}$; we had to choose $\mathscr{L}_{q}^{x^2}$ to be a $\Pic^0$-twist of the bundle $\mathscr{L}_{q}$ from \Cref{intro} because we needed only integral powers of $q$ to appear, in case $w$ is odd --- if $w$ is even then we could choose $\mathscr{L} = \mathcal{O}([-q^{1/2}])$ which would be analogous to the bundle $\mathcal{L}_{q}^{x^2, q^{1/2}} = \LL_q$ which has global section $\sum_n q^{n^2/2}Z^n$).  

Then $\mathscr{L}^Q_{\BZ((q^{1/w}))}$ can be defined following \cite[\S 3]{KV} by
\begin{align}\label{twistQ}\mathscr{L}^Q_{\laurent} :&= \mbox{``$Q$-twist of $\mathscr{L}_{\laurent}$''}\\
  &= \left(\bigotimes_{i = 1}^g \pi_i^*\mathscr{L}^{b_{ii}}\right) \otimes \left(\bigotimes_{1 \leq i < j \leq g} (m_{ij}^*\mathscr{L} \otimes \pi_i^* \mathscr{L}^{-1} \otimes \pi_j^*\mathscr{L}^{-1})^{b_{ij}}\right),\end{align} 
where:
\begin{enumerate}
\item $\pi_i: E_{w, \Tate}^g \to E_{w, \Tate}$ is the $i$-th natural projection,
\item $m_{ij}: E_{w, \Tate}^g \to E_{w, \Tate}$ is the multiplication map of the $i$-th and $j$-th projections.
\end{enumerate}
The additional data $\underline{a}$ defines a specific $\Pic^0$-twist of $\LL^Q_{\laurent}$. By \cite[Ch. 1]{KM}, an element of $\Pic^0(G_{\laurent})$ is always of the form $\otimes_{i = 1}^g \pi_i^*\mathcal{O}([p_i] - [-1])$, where $p_i: \Spec \laurent \to E_{w, \Tate}$ is a point of $E_{w, \Tate}$. By algebrizing the sections of \cite[Ch. VII, \S 1.2, (d)]{DR}, we obtain such a point $p_i$ from the choice of $a_i(q) \in \laurent^\times$. Thus, we may define a more general rigidified line bundle on $G_{\laurent}$, namely
\begin{equation}
\LL_{\laurent}^{Q, a} := \LL^Q_{\laurent} \otimes \bigotimes_{i = 1}^g \pi_i^*\mathcal{O}([-a_i(q)^{-1}] - [-1]),
\end{equation}
Note that if $a_i \in \{\pm q^{1/2}, \pm 1\}$ (the first two being possible only if $w$ is even), then the points $\Spec \laurent \to E_{w, \Tate}$ they define are exactly the ones from \cite[Ch. VII, (1.16.3), (1.16.4)]{DR}, and the $\Pic^0$-twists being used here are precisely the ones by $\Pic^0(G)[2]$, making the resulting bundles $\mathscr{L}^{Q, a}$ symmetric.

It isn't relevant to actually stating our theorems, but $\LL_{\laurent}^{Q, \underline{a}}$ extends uniquely to a rigidified line bundle on all of $G$, by \cite[Ch. II, Th\'eor\`eme 3.2.3]{MB}. In the next section we will prove that these account for all rigidified line bundles on $G$ (equivalently, on $G_{\laurent}$, by \cite[Ch. II, Th\'eor\`eme 3.2.3]{MB}). 

\subsection{Full statement of the non-equivariant and equivariant Fourier expansion principles}
Having now explained precisely how the line bundle $\LL^{Q, \underline{a}}$ depends on $Q$ and $\ua$, we note that the statement we already gave for the Fourier expansion principle on $G_{\laurent}$ in \Cref{shortresult} is now fully well-defined. Recalling the definition of $\mathcal{O}(\adictorus)$ from \Cref{veryconvergent}, and noting that its definition itself does not require any theory of adic spaces, we restate \Cref{shortresult} in full:

\begin{theorem}\label{mainthm}
  Let $i \geq 0$, $w \geq 1$ and $g \geq 1$ be integers, and let $Q = \frac{1}{2}\sum_{i, j} b_{ij} x_ix_j$ be a quadratic form on $\BZ^g$. Let $\ua = a_1, \ldots, a_g$ be a tuple of elements of $\laurent^\times$. Then there is a canonical isomorphism
  \[H^i(E_{w, \Tate}^g, \mathscr{L}^{Q, \ua}_{\laurent}) \cong H^i(\BZ^g, \mathcal{O}(\adictorus)),\]
  where the action of $\BZ^g$ on $\mathcal{O}(\adictorus)$ is as defined in \Cref{finalaction}.
\end{theorem}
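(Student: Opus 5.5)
The plan follows the strategy sketched in \Cref{ideas}. We work with a relatively complete model rather than with adic spaces directly.

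\textbf{Step 1: the model.} Choose a $Y$-periodic, $\BZ^g$-translation-invariant fan $\Sigma$, so that the toric $\pwr$-scheme $\Pt_\Sigma$ is a relatively complete model for $(\Gt, i)$. Using the classification recalled in \Cref{degens}, produce a $Y$-equivariant line bundle $\Lt$ on $\Pt_\Sigma$, together with an auxiliary ample $\Nt$ on $\Pt_\Sigma$. On the open torus, $\Lt$ restricts to the structure sheaf, and its $Y$-linearization is the one written in \Cref{finalaction}. The algebrization of $(\Lt^\wedge, \Pt^\wedge_\Sigma)/Y$ is $(\LL^{Q,\ua}, P_{g,w})$, whose $\laurent$-fiber is $(\LL^{Q,\ua}_{\laurent}, E^g_{w,\Tate})$. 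By formal GAGA \cite[Th\'eor\`eme (4.1.5)]{EGAIII}, $H^i(P_{g,w}, \LL^{Q,\ua})$ is the cohomology of $\Lt^\wedge$ on the formal quotient. Inverting $q^{1/w}$ is exact, so it remains to compute $\laurent \otimes H^i(\Pt_\Sigma^\wedge/Y, \Lt^\wedge/Y)$.

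\textbf{Step 2: the double complex.} Cover $\Pt_\Sigma$ by the affine toric charts $U_\sigma$, $\sigma\in\Sigma$ maximal; $Y$ permutes these charts freely. The cover $\{U_\sigma\}$ descends to a cover of the formal quotient by finitely many affine formal opens, namely the $Y$-orbit classes of the charts; refine it if necessary so the images are affine. Taking $Y$-invariants of the product over $Y$-orbits and applying Shapiro's lemma gives a double complex
\[ C^p\Bigl(Y, \prod_{\underline{\sigma}} \Gamma(U_{\underline{\sigma}}, \Lt)^\wedge\Bigr), \]
with Čech differential in the $q$-direction. Its total complex computes $H^\bullet(\Pt^\wedge/Y, \Lt^\wedge)$. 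The key technical point is where we tensor with $\laurent$. Shapiro lets us rewrite the terms using a fundamental domain, so that each term is a finite product of induced modules before we tensor with $\laurent$. We then tensor termwise inside the products, which commutes because only finitely many factors occur per $Y$-orbit. The result is the complex of \Cref{ssintro}, and the resulting spectral sequence converges since $Y$ has finite cohomological dimension $g$.

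\textbf{Step 3: identifying the inner complex.} For each cone $\sigma$, $\laurent\otimes\Gamma(U_\sigma,\Lt)^\wedge$ is the $\laurent$-algebra of series $\sum a_\chi \underline{Z}^\chi$ with $v_q(a_\chi)+\langle x,\chi\rangle\to\infty$ uniformly for $x$ in the polytope dual to $\sigma$. This is the ring of functions on an affinoid polyannulus in $\BG_m^{\an,g}$. The charts and their intersections correspond to a cover of $\BG_m^{\an,g}$ by affinoid polyannuli, so the inner complex is the Čech complex of $\mathcal{O}$ for a locally finite affinoid cover. By Kedlaya--Liu \cite[Theorem 2.6.5]{KL2}, its higher cohomology vanishes. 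Its $H^0$ consists of series converging on every polyannulus, which are exactly the very convergent series of \Cref{veryconvergent}.

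\textbf{Step 4: conclusion, and the main obstacle.} The $E_1$ page is concentrated in the row $q=0$ and equals $C^p(Y,\mathcal{O}(\adictorus))$. The induced $Y$-action is read off from the linearization of $\Lt$, which is \Cref{finalaction}. Hence the spectral sequence degenerates and gives the isomorphism of \Cref{mainthm}. Canonicity follows because the construction is independent of $\Sigma$: common refinements of fans induce quasi-isomorphisms of the double complexes. I expect the main obstacle to be Step 2. One must check that base change to $\laurent$ can be moved inside the infinite products, where unbounded denominators are the danger. One must also check that $\laurent\otimes(\text{completion})$ matches the affinoid function rings exactly, not merely up to bounded denominators. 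I would attack this first by explicit lattice-point estimates on each cone.
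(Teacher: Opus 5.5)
Your proposal is essentially the paper's proof. It uses the toric relatively complete model and the double complex $C^p(Y,\check{C}^q)$, whose horizontal direction collapses by Shapiro's lemma onto a finite complex on the quotient; it inverts $q^{1/w}$ only after passing to a finite fundamental domain; and it then runs the vertical spectral sequence, with $H^0$ the very convergent series (\Cref{H0}) and higher vanishing from Kedlaya--Liu (\Cref{Hq}). Your Zariski refinement of the fan, used in place of the \'etale cover of \Cref{qt-cech}, is an alternative the paper itself mentions in the proof of \Cref{K-cpt}.

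One correction to the obstacle you flag in Step~4. Base change to $\laurent$ genuinely does not commute with the product over $Y$, since elements can have unbounded denominators across translates, and no lattice-point estimate will change that. Nothing of the sort is needed, though. Let $M_q$ be the finite product over a fundamental domain. Shapiro's lemma, applied separately to $\mathrm{coInd}_1^Y(M_q)$ and to $\mathrm{coInd}_1^Y(M_q\otimes\laurent)$, collapses the two double complexes onto $M_\bullet$ and $M_\bullet\otimes\laurent$. Flatness of $\laurent$ over $\pwr$ then finishes the comparison.
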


In the case where $Q$ is positive-definite, Faltings--Chai \cite[Theorem 6.2]{FC} (see also \cite[Corollary 4.5.4.24]{lan}) proved the analog of (\ref{fourierexp-definite}), that is, the special case of \Cref{mainthm} where $i = 0$ and $Q$ is positive-definite. As discussed in \Cref{periods-recover}, their proof used the method of Raynaud extensions and thus \emph{only} applies to global sections of ample line bundles, because it relies on passing a global section to the formal completion and then studying the properties of its Fourier coefficients as a section of the structure sheaf of a torus (this will be described in more detail in \Cref{degens}).

As promised in \Cref{intro}, the second main theorem of this article is an equivariant version of \Cref{mainthm}. Let $\Gamma$ be a subgroup of the orthogonal group of $(\BZ^g, Q)$. Then $E_{w, \Tate}^g$ admits an action of $\Gamma$ for which $\mathscr{L}^Q$ admits an obvious $\Gamma$-equivariant structure. Therefore, in addition to the coherent cohomology $H^i(E_{w, \Tate}^g, \mathscr{L}^Q)$, one can also consider the $\Gamma$-\emph{equivariant} coherent cohomology $H^i(E_{w, \Tate}^g, \mathscr{L}^Q)$; our second main theorem is a Fourier expansion principle for the $\Gamma$-equivariant cohomology of $\mathscr{L}^Q$ and is stated here:
\begin{theorem}\label{mainthm-equiv} 
  Let $i \geq 0$ and $g, w \geq 1$ be integers. The isomorphism of \Cref{mainthm} fits into a commutative diagram
\begin{center}
  \begin{tikzcd}
{H^i(E_{w, \Tate}^g, \mathscr{L}^Q_{\laurent})} \arrow[rr, "\sim"]                                   &  & {H^i((q^\BZ)^g, \mathcal{O}(\adictorus))}                                                           \\
{H^i_\Gamma(E_{w, \Tate}^g, \mathscr{L}^Q_{\laurent})} \arrow[u] \arrow[d, "e^*"] \arrow[rr, "\sim"] &  & {H^i(\Gamma \ltimes (q^\BZ)^g, \mathcal{O}(\adictorus))} \arrow[u] \arrow[d, "\mathrm{res_\Gamma}"] \\
{H^i_\Gamma(\Spec(\laurent), \mathcal{O})} \arrow[rr, "\sim"]                               &  & {H^i(\Gamma, \laurent)}                                                                      
\end{tikzcd}
\end{center}
where:
\begin{itemize}
\item The vertical upwards-pointing maps are the natural restriction maps.
\item The vertical downwards-pointing map on the left side is induced by pulling back along $e: \Spec(\laurent) \to E_{w, \Tate}^g$ and using the rigidification $e^*\mathscr{L}^Q \cong \mathcal{O}_{\laurent}$ that is part of the data of $\mathscr{L}$.
\item The vertical downwards-pointing map on the right side is induced by restricting to $\Gamma$ and restricting the coefficients in $\mathcal{O}(\adictorus)$ along $\underline{Z} = (1, \ldots, 1)$ to obtain elements of $\laurent$.  
\end{itemize}
The statement is also valid as written with $\LL \otimes Q$ replaced with $\LL^{x^2, a} \otimes Q$ for any $a \in \{\pm 1, \pm q^{1/2}\}$ (the latter two possibilities allowed so long as $w$ is even).
\end{theorem}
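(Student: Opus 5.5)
We prove \Cref{mainthm-equiv} by redoing the proof of \Cref{mainthm} with the group $\Gamma \ltimes Y$, $Y = (q^\BZ)^g$, in place of $Y$ throughout. We then check functoriality of each identification with respect to the two vertical maps. The first step is to make $\Gamma$ act on all the auxiliary objects. We choose the fan $\Sigma$ defining the relatively complete model $\Pt_\Sigma$ to be $\Gamma$-stable; the Mumford-type fan adapted to a positive-definite auxiliary form is not $\Gamma$-stable when $Q$ is indefinite, so we instead take a $\Gamma \ltimes Y$-stable rational polyhedral decomposition of $\Hom(X,\BR)$. Since $\Gamma$ acts on $X$ through $\Gamma \subset O(Q)$, we lift the $\Gamma$-action to the torus $\Gt$, to $\Pt_\Sigma$, and to the model $\Lt$ of $\LL^Q$. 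The equivariant structure on $\Lt$ is the obvious one on the trivial bundle over $\Gt_{\laurent}$, namely $(\gamma f)(\underline{Z}) = f(\underline{Z}^{\gamma})$. It is compatible with the $Y$-action \eqref{finalaction} because $Q'(\gamma y)$ and $B\gamma y$ transform correctly. This is the point where we need $\ua$ trivial, or $\ua$ in $\{\pm1,\pm q^{1/2}\}$ with the twist $\LL^{x^2,a}$: only then is the character $y \mapsto \prod a_j^{y_j}$ together with the linear correction $Q' - Q$ $\Gamma$-invariant, possibly after a coboundary adjustment that we write down explicitly. The ample bundle $\Nt$ used for algebrization can be chosen $\Gamma$-invariant by averaging over a finite-index quotient, or we only need it to exist, since $\Gamma$ acts on the algebrized scheme by uniqueness of algebrization.

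Next we set up the equivariant spectral sequence. Equivariant cohomology $H^i_\Gamma(E^g_{w,\Tate},\LL)$ is computed by the total complex of $C^\bullet(\Gamma, \check{C}^\bullet)$, where $\check C^\bullet$ is a $\Gamma$-stable \v Cech complex. We use formal GAGA on the quotient $\Pt^\wedge/Y$ exactly as in the proof of \Cref{mainthm}, and then pull back to $\Pt^\wedge$. This replaces the complex above by the total complex of $C^\bullet(\Gamma\ltimes Y, \laurent\otimes \check{C}^\bullet(\Pt,\Lt)^\wedge)$, with base change taken inside the products. For the nonequivariant case, the proof of \Cref{mainthm} shows via Kedlaya--Liu that the inner complex is a resolution of $\mathcal{O}(\adictorus)$. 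That argument does not involve the group, so it applies verbatim and gives $H^i(\Gamma\ltimes Y, \mathcal{O}(\adictorus))$. The Hochschild--Serre/restriction compatibility, i.e.\ the top square, then holds because restricting cochains from $\Gamma\ltimes Y$ to $Y$ corresponds to forgetting the $\Gamma$-structure at the level of the double complex.

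The main obstacle is the unbounded-denominators issue in the presence of $\Gamma$. In the nonequivariant proof the products over $Y$ are tamed by Shapiro's lemma using a fundamental domain for $Y$, and $\Gamma\ltimes Y$ is not of that form. I would handle this by applying Shapiro only for the normal subgroup $Y$, via a Hochschild--Serre argument, so that $\Gamma$ acts on an already-identified finite complex. Failing that, I would show that the comparison map of double complexes is a quasi-isomorphism after restriction to $Y$. That suffices, because a map of $\Gamma\ltimes Y$-complexes that is a quasi-isomorphism of underlying complexes induces an isomorphism on $\Gamma\ltimes Y$-hypercohomology.

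Finally we treat the bottom square. The identity section $e$ is $\Gamma$-fixed. Pulling back the double complex along $e$ lands in $\Gt_{\laurent}$ at $\underline Z = (1,\ldots,1)$, in the open chart containing the identity, where $\Lt$ is trivialized compatibly with the rigidification. The pullback of a cochain is then given by evaluating its coefficients, which are very convergent series, at $\underline Z=1$, and this converges. The map kills the $Y$-direction only after restriction to $\Gamma$, matching $\mathrm{res}_\Gamma$. Commutativity therefore reduces to the statement that evaluation at $\underline Z = 1$ commutes with the identifications of the previous step, which holds chartwise by construction.
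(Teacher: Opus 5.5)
There is a genuine gap at your first step. The $\Gamma\ltimes Y$-stable rational polyhedral decomposition you ask for does not exist once $\Gamma$ is infinite, and that is exactly the case that matters (indefinite $Q$, $\Gamma$ arithmetic).

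For $X_\Sigma\times_{\BZ[q^{1/w}]}\pwr$ to be a relatively complete model, the cells of the decomposition of $\{1\}\times(Y\otimes\BR)$ must satisfy two conditions:
\begin{itemize}
\item they are finite in number modulo $wY$, by condition (i) of \Cref{rcm};
\item they are bounded. If a cell were unbounded, the closed cone over it would contain a ray $(0,r)$ with $r\neq 0$. The monomial valuation of weight $(0,r)$ has $v(q)=0$ and would then acquire a center, contrary to condition (ii).
\end{itemize}
Hence the decomposition is locally finite, and the union $S$ of the cells containing the origin is a compact neighbourhood of $0$. Since $\Gamma$ acts linearly on $Y\otimes\BR$ fixing $0$, stability would force $\gamma(S)=S$ for all $\gamma\in\Gamma$. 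So $\Gamma$ would be a bounded, hence finite, subgroup of $\GL_g(\BZ)$.

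The same objection applies to your $\Nt$. A $\Gamma$-invariant $Y$-equivariant ample bundle would produce a $\Gamma$-invariant positive-definite form, and there is no finite quotient to average over. Consequently $\Gamma$ acts neither on $\Pt$, nor on $\Pt^\wedge/Y$, nor on its algebrization $P_{w,g}$, which genuinely depends on the model; only the generic fiber $E^g_{w,\Tate}$ is canonical. So the $\Gamma$-stable \v{C}ech complex, and the complex $C^\bullet(\Gamma\ltimes Y,\laurent\otimes\check C^\bullet(\Pt,\Lt)^\wedge)$ on which the rest of your argument runs, are not defined. The unbounded-denominators issue you single out as the main obstacle is real but secondary. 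It is handled, as you suggest, by applying Shapiro's lemma for $Y$ alone.

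The missing idea, which is what the paper does, is to give up on a single model that sees all of $\Gamma$.
\begin{itemize}
\item For one $\gamma$, take a common refinement $\Sigma_\gamma$ of $\Sigma$ and $\gamma^{-1}\Sigma$. This gives a model $\Pt_\gamma$ on which $\gamma$ is a toric map $\Pt_\gamma\to\Pt_\Sigma$. The rigidified isomorphism $\gamma^*\Lt\to\Lt$ is multiplication by $1$ when $a=\pm q^{1/2}$, and by $\underline{Z}^{(b-\gamma^\top b)/2}$ when $a=\pm 1$; this is the coboundary you left unwritten.
\item For cochains, take for each tuple $\ugamma\in\Gamma^p$ a \emph{finite} iterated refinement $\Sigma_{\ugamma}$, and form $K^{p,q}=\prod_{\ugamma\in\Gamma^p}\Tot^q C^\bullet(Y,\check C^\bullet_{\laurent}(\mathcal U_{\Sigma_{\ugamma}},\Lt^\wedge))$. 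Its face maps in the $\Gamma$-direction are pullback along $\gamma_0$ and along refinement maps.
\item Compare $K$ with the Godement double complex of the simplicial scheme $E^g_{w,\Tate}\times\Gamma^\bullet$ that defines $H^\bullet_\Gamma$. This is done level by level, via formal GAGA, the \v{C}ech--Godement comparison, and Shapiro for $Y$.
\item Separately, compare $K$ with $C^\bullet(\Gamma,C^\bullet(Y,\mathcal O(\adictorus)))$, via the vertical spectral sequences and Hochschild--Serre.
\item Check the evaluation square directly on $K$. This uses that the cover of each $\Pt_{\ugamma}$ pulls back along $e$ to finitely many copies of $\Spec\pwr$ and countably many copies of $\Spec\laurent$.
\end{itemize}
Your reasoning for the top and bottom squares can be transplanted into this framework, but as written it rests on a $\Gamma$-stable cover that cannot exist.
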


\section{The Faltings--Chai theory of degenerations}\label{degens}
In this section, we recall the general framework that the Kuga--Sato degenerations fit into, especially the classification of more general degenerations established by Faltings--Chai \cite[Ch. II-III]{FC}. Although this is already present in the literature, there are four reasons for doing this here:

\begin{enumerate}
\item[(1)] The Faltings--Chai classification is precisely the fact that Mumford's relatively complete models can be used to uniformize degenerations and line bundles on them. Therefore, we will make explicit use of it in order to classify the line bundles on Kuga--Sato degenerations, and to actually prove our main results \Cref{mainthm} and \Cref{mainthm-equiv}.
\item[(2)] The \emph{proof} of the Faltings--Chai classification is in and of itself an illustration of the arithmetic importance of the Fourier expansion principle in the ample case. Indeed, the key idea of \cite[Ch. II]{FC} is that one can recover the lattice of periods that determines a degeneration from the Fourier expansions of global sections of an ample line bundle. The analytic version of this was already discussed in \Cref{ana}.   
\item[(3)] The proof of the Faltings--Chai classification in \cite{FC} is missing a few explicit statements when it comes to the uniformization of arbitrary line bundles, which are worth making precise.
\item[(4)] The statement of the classification in the case of $E_{w, \Tate}^g$ requires significantly less notation than the most general situation, so we hope that recalling it here will be a convenience for the reader. We also hope that this concrete explication of Faltings--Chai in the special case of powers of the Tate curve will be helpful for those first learning the subject. 
\end{enumerate}

The Kuga--Sato degenerations are a special case of the more general concept of \emph{degenerations of abelian varieties}, which are the algebraic model for degenerating families of abelian varieties that suffice for the construction by Faltings--Chai \cite{FC} and Lan \cite{lan} of toroidal compactifications of PEL Shimura varieties. 

In the general Faltings--Chai--Lan theory of arithmetic toroidal compactifications of PEL Shimura varieties, the universal families of polarized abelian varieties over the Shimura variety naturally extend to families of \emph{semiabelian varieties} (extensions of abelian varieties by algebraic tori) over the choice of compactification. That this is the natural way for quite general families of abelian varieties to degenerate at the boundary of their moduli space was \emph{a priori} known due to Grothendieck's semistable reduction theorem \cite[Th\'eor\`eme 3.6]{sga71} decades before any arithmetic compactifications of the global moduli were constructed.

It is also clearly visible in the analytic theory of moduli of polarized abelian varieties: a general family of abelian varieties over $\BC$ is given by \[V_\Omega := \BC^g/(\BZ^g + \Omega \BZ^g) \underset{\sim}{\overset{\exp(2 \pi i \cdot)}{\longrightarrow}} (\BC^\times)^g/\exp(2 \pi i \Omega \BZ^g)\] for a family of $g \times g$ period matrices $\Omega$ with positive-definite imaginary part. The abelian variety $V_\Omega$ naturally comes with a polarization coming from the Appell--Humbert theorem \cite[Ch. 1]{mumfordAV} via the Hermitian matrix $(\mathrm{Im} \Omega)^{-1}$. Such a family degenerates to a semiabelian family as $\Omega$ degenerates in different directions. The exact directions in which $\Omega$ can degenerate is part of the data that determines the choice of toroidal compactification --- see \cite{AMRT, namikawa1, namikawa2} for the full detail of how those degenerations are constructed. The analytic Kuga--Sato degenerations $E_q^g$ from \Cref{ana} are easy to understand from this point of view: they come from the choice $\Omega = \mathrm{diag}(\tau, \ldots, \tau)$, where the parameter $\tau$ degenerates to $i\infty$, and $q := \exp(2 \pi i \tau)$. In that case, the family degenerates to a full torus at $q = 0$, since the entire period lattice $(q^\BZ)^g$ degenerates to zero as $q \to 0$.

Over formal neighborhoods of the boundary components of the arithmetic toroidal compactifications, the aforementioned semiabelian universal families become \emph{degenerations} of polarized abelian varieties. The degenerations used by \cite{lan} can be over any affine base
\[S := \Spec R,\]
where $R$ is a normal Noetherian domain complete with respect to an ideal $I \subset R$. Since they apply to the only ring that we work over, $R = \pwr$, so long as we are working with a general ring $R$ (only in this section), we also add the hypotheses:
\begin{itemize}
\item $R$ is regular.
\item $R/I$ is a normal domain. 
\end{itemize}
These are not essential but allow for parts of the theory to be written down using fewer words and most importantly fewer steps involving gluing in the \'etale topology of $\Spf(R)$. For the reader carefully following along in \cite{lan}, we at least note that the regularity hypothesis on $R$ allows us to know \emph{a priori} that all $R$-tori are isotrivial, and that all cubical line bundles on $\BG_m^g$ over $R$ are trivial. 

Let $K$ be the field of fractions of $R$. 
\begin{definition}
  A \emph{degeneration} over $R$ is a semi-abelian scheme\footnote{A \emph{semiabelian scheme} over $S$ is a smooth separated commutative group scheme $G \to S$ such that for all points $s \in S$, the fiber $G_s$ is a semiabelian variety over $\kappa(s)$.} $G \to S$, such that the generic fiber $G_K$ is an abelian variety over $K$ and such that the fiber $G_0$ over $S_0 := \Spec(R/I)$ is an extension of an abelian scheme $A_0 \to S_0$ by a torus $T_0 \to S_0$. The category $\DEG$ is the one whose objects are the degenerations over $R$ and whose morphisms are the $S$-group homomorphisms. 
\end{definition}

In the general case, in either the analytic or arithmetic setting the exact description of the degenerations near each boundary component of the relevant compactified moduli of polarized abelian varieties requires fairly heavy notation to describe, as they depend on the choice of data determining the toroidal compactification in order to specify the directions in which the periods degenerate, and they might not be totally degenerate (meaning $G_0$ could be a nontrivial extension of an abelian scheme by an algebraic $S_0$-torus instead of just an $S_0$-torus). In the arithmetic situation one also has to contend with the possibility that the torus part of $G_0$ is not split. This article therefore enjoys simpler notational setup, since we need only consider the split totally degenerate situation of $E_{w, \Tate}^g$ ---  the Kuga--Sato degeneration $G = G_w^g$ over $S = \Spec \pwr$ is of relative dimension $g$ and specializes to split rank-$g$ tori over $S_0 = \Spec \BZ$. We therefore further specialize to the following category, following \cite[Ch. III, \S 2]{FC}:

\begin{definition}The category $\DEG_{g, g}^{\spli}$ is defined to be the full subcategory of $\DEG$ whose objects are the degenerations $G \to S$ of relative dimension $g$ with the property that $G_0$ is a split rank-$g$ torus over $S_0$. 
\end{definition} 

Obviously for our purposes one needs to consider degenerations $G$ along with the data of a rigidified line bundle $\mathcal{L}$ on $G$. The most important ones to the theory of PEL Shimura varieties are of course the ones inducing polarizations of the abelian varieties $G_K$.
\begin{definition}\label{ampledef}
  The category $\DEG_{\amplegg}^{\spli}$ is the category whose objects are pairs $(G, \mathcal{L})$, where $G$ is an object of $\DEG_{g, g}^{\spli}$, and $\mathcal{L}$ is a line bundle on $G$ rigidified along the identity section, with the additional property\footnote{The lineage of this condition is as follows: as written in \Cref{ampledef}, it agrees exactly with what is in \cite[Ch. III, \S2]{FC}. Lan remarks that it seems that Faltings--Chai should really require $\mathcal{L}$ be ample on $G$ \cite[Definition 4.2.1.1]{lan}. The two coincide in the case when $\mathcal{L}$ is symmetric, by \cite[Ch. VI, Proposition 3.1]{MB}, but \emph{not} necessarily otherwise --- to be addressed in a short note. However, after recent communication with Lan, the consensus is that the definition of \cite{FC} is the correct one. With Lan's convention, \cite[Corollary 4.5.4.9]{lan}, which states that the ``uniformization'' functor $M_{IS}$ restricts to give a functor from $\DD_{ample}^{\spli}$ to $\DEG_{\amplegg}^{\spli}$, fails to hold. We refer to later in this section for the relevant definitions.} that $\mathcal{L}_K$ is ample on the abelian variety $G_K$; the morphisms are the $S$-group homomorphisms $f: G_1 \to G_2$ such that $\mathcal{L}_1 = f^*\mathcal{L}_2$.   
\end{definition}
\begin{remark}
  At least after suitable base change, a polarization on $G_K$ is always induced by a symmetric ample line bundle $\mathcal{L}_K$ on $G_K$, which always extends to an object of $\DEG_{\amplegg}^{\spli}$ by \cite[Ch. II, Th\'eor\`eme 3.2.3]{MB}. Conversely, every object $(G, \mathcal{L})$ of $\DEG_{\amplegg}^{\spli}$ restricts to an ample line bundle on $G_K$ which induces a polarization on it. 
\end{remark}
\begin{definition}
  The category $\DEG_{\ISgg}^{\spli}$ is the category whose objects are pairs $(G, \mathcal{L})$, where $G$ is an object of $\DEG_{g, g}^{\spli}$, and $\mathcal{L}$ is any line bundle on $G$ rigidified along the identity section.
  The morphisms are the $S$-group homomorphisms $f: G_1 \to G_2$ such that $\mathcal{L}_1 = f^*\mathcal{L}_2$. 
\end{definition}

The pairs $(G_w^g, \mathscr{L}^{Q, \ua})$ from \Cref{intro2} are objects of the category $\DEG_{\ISgg}^{\spli}$, and are in $\DEG_{\amplegg}^{\spli}$ if $Q$ is positive-definite. 

One of the novel contributions of \cite{FC} (with some corrections by \cite{lan} to the proofs) was a complete classification of all degenerations (including the line bundle data) as defined above in terms of so-called \emph{degeneration data}. In the case of the line bundles on Kuga--Sato degenerations, the degeneration data of Faltings--Chai are exactly the promised linear algebraic data in step (I) of \Cref{intro}. We will use that theory to prove that the bundles $\LL^{Q, \ua}$ exhaust all the objects of $\DEG^{\spli}_{\ISgg}$ extending the object $G$ of $\DEG^{\spli}_{g, g}$; more importantly, we will use the viewpoint of how $\LL^{Q, \ua}$ fits into that classification to prove our main theorems \Cref{mainthm} and \Cref{mainthm-equiv} as well as our analysis of the Heisenberg group action in \Cref{heis}. 

The idea of the Faltings--Chai classification of objects in $\DEG_{g, g}^{\spli}$ is that in analogy to the analytic case they may be written as a kind of quotient of $\BG_m^{g}$ by an embedded lattice of periods $\BZ^g \cong Y \hookrightarrow \BG_m^g \times_R K$. We will prove that, as expected from \Cref{ana}, the Kuga--Sato degenerations fit into this framework using the specific choice of embedding $Y = \BZ^g \hookrightarrow \BG_m^g \times_R K$ given by
\[\underline{y} = (y_1, \ldots, y_g) \mapsto (q^{y_1}, \ldots, q^{y_g}).\]
Of course, there is no such scheme-theoretic quotient, as the Zariski topology is too coarse. There are at least two closely-related ways\footnote{Recall from \Cref{intro}: they should be related by Huber's functor $t_a$ \cite{huberbook, regensburg}, but the quasicompactness assumption in Huber's work needs to be weakened to quasi-paracompactness.} to resolve this question. One is to consider the adic space $\BG_m^g$ over $\Spa(\laurent, \pwr)$ and take the quotient by $Y$ following \cite{hubergen}. We will use this point of view in part of our \emph{proof} of \Cref{mainthm}, but for the setup it is more convenient (if only due to having more theory available surrounding it in the literature) to follow the method of uniformization by relatively complete models, which is due to Mumford \cite{mumford_compositio} and used in the viewpoint of \cite{FC} and \cite{lan}. 

This section is further organized into two subsections following \cite[Ch. II-III]{FC}. In \Cref{mumford-ss}, we will summarize the content of \cite[Ch. III]{FC} and its counterparts in \cite[Ch. 4]{lan}, which explain Mumford's construction generalizing that of (line bundles on) the Tate curve from \Cref{qt} to construct objects in $\DEG^{\spli}_{g, g}$ (resp. $\DEG^{\spli}_{\ISgg}$) as quotients of rank-$g$ split $R$-tori (resp. by descending $Y$-equivariant data to those quotients). In \Cref{raynaud}, we summarize \cite[Ch. II]{FC} and its counterparts in \cite{lan}, which explains how to reverse the process, to prove that every object in $\DEG_{\ISgg}^{\spli}$ can be uniformized in the way described in \Cref{mumford-ss}.  

\subsection{Mumford's quotient construction}\label{mumford-ss}

The objects used to uniformize objects of $\DEG_{g, g}^{\spli}$ are the \emph{degeneration data}:
\begin{definition}\label{DDdef}
  The category $\DD_{g, g}^{\spli}$ of split \emph{degeneration data} of rank $g$ and torus rank $g$ over $R$ has for its objects the tuples $(\widetilde{G}, i: Y \hookrightarrow \widetilde{G}_K)$, where:
  \begin{enumerate}
  \item $\widetilde{G}$ is a split torus over $R$ with character lattice $X \cong \BZ^g$ the constant $R$-group scheme. A section $\chi \in X$ interpreted as a character of $\Gt$ by the notation $(\cdot)^{\chi}$ or $\underline{Z}^\chi$ depending on whether the context demands that we view it as an abstract character or evaluate it at a given point. 
  \item $Y \cong \BZ^g$ is also the constant $R$-group scheme, and $i: Y_K \hookrightarrow \widetilde{G}_{K}$ is an inclusion of group schemes.
  \item The pairing $\tau: Y \times X \to K^\times$ that is equivalent to the data of $i: Y \hookrightarrow \Gt_K$ via
    \[i(y)^{\chi} = \tau(y, \chi) \qquad \mbox{for all $(y, \chi) \in Y \times X$}\]
    is nondegenerate, and
    satisfies the positivity condition: there exists a map of abelian groups $\phi: Y \to X$ with the property that $\tau(-, \phi(-))$ is symmetric and
    \[\tau(y, \phi(y)) \in I \text{ for all } y \in Y - \{0\}.\] 
  \end{enumerate}
  The last condition is related to the fact that objects of $\DEG_{g, g}^{\spli}$ always extend to objects of $\DEG_{\amplegg}^{\spli}$ by \cite[XI 1.16]{raynaud}, so the same should be true in the category of data that uniformize those objects (we will see how the positivity condition is related to existence of ample line bundle once we discuss degeneration data for line bundles).
  
  The morphisms in $\DD_{g, g}^{\spli}$ are defined to be the morphisms of group schemes $\Gt_1 \to \Gt_2$ such that the diagram
  \begin{center}
\begin{tikzcd}
Y_1 \arrow[r, "i_1"] \arrow[d, "f"] & \widetilde{G}_1 \arrow[d, "f"] \\
Y_2 \arrow[r, "i_2"]                & \widetilde{G}_2               
\end{tikzcd}
  \end{center}
  commutes.
\end{definition}

The purpose of this section is to define the functor $M: \DD_{g, g}^{\spli} \to \DEG_{g, g}^{\spli}$ which is the ``quotient by the period lattice $Y$'', as well as its variant $M: \DD_{\ISgg}^{\spli} \to \DEG_{\ISgg}^{\spli}$ that also takes into account the ability to produce line bundles on the quotient by providing $Y$-equivariant line bundles on the uniformization. This is Mumford's method of uniformization by relatively complete models \cite{mumford_compositio} --- it works by producing a kind of compactification $\Pt$ of $\widetilde{G}$ whose quotient by $Y$ can be taken, after formal completion, exactly as in \Cref{qt}. 

\begin{definition}\label{rcm}
  Let $(\widetilde{G}, i: Y_K \hookrightarrow \widetilde{G}_K)$ be an object of $\DD_{g, g}^{\spli}$. A \emph{relatively complete model} for $(\widetilde{G}, i: Y_K \hookrightarrow \widetilde{G}_K)$ is the following data:
  \begin{enumerate}
  \item An integral $R$-scheme $\widetilde{P}$ locally of finite type over $R$ containing $\widetilde{G}$ as a Zariski-dense open subset.
  \item An ample\footnote{The definition of ample requires some care because $\widetilde{P}$ is not of finite type over $R$. For these purposes one always takes it to be that the nonvanishing loci of all global sections of all positive powers of $\widetilde{\mathscr{N}}$ generate the topology of $\widetilde{P}$.} line bundle $\widetilde{\mathscr{N}}$ on $\widetilde{P}$. We remark that $\widetilde{\mathscr{N}}|_{\widetilde{G}} \cong \mathcal{O}$ by \cite[Corollary 3.2.5.7]{lan}. 
  \item An action of $\widetilde{G}$ on $\widetilde{P}$ extending the action of $\widetilde{G}$ on itself by translation.
  \item An action of $Y$ on $\widetilde{P}$ extending the action on $\widetilde{G}_K$ via $i$ and the group law of $\Gt_K$. 
  \item A $Y$-equivariant structure on $\widetilde{\mathscr{N}}$. We remark that after restriction to the generic fiber, this is the same as a $Y$-equivariant structure on $\mathcal{O}_{\widetilde{G}_K} \cong \widetilde{\mathscr{N}}|_{\widetilde{G}_K}$, which is given by a cocycle $\sigma$ for $H^1(Y, \mathcal{O}(\widetilde{G}_K)^\times)$. 
  \item A $\widetilde{G}$-equivariant structure on $\widetilde{\mathscr{N}}$ extending the translation action on $\mathcal{O}_{\widetilde{G}} \cong \widetilde{\mathscr{N}}|_{\widetilde{G}}$.
  \end{enumerate}
  The above data are required to satisfy some conditions:
  \begin{enumerate}
  \item[(i)] There is an open $\widetilde{G}$-invariant subset $U \subset \widetilde{P}$ such that $U$ is of finite type over $R$ and
    \[\widetilde{P} = \bigcup_{y \in Y} y \cdot U.\]
  \item[(ii)] $\widetilde{P}$ satisfies the following variant of the valuative criterion for properness: For all valuations $v$ (possibly not of rank $1$) on the field of rational functions of $\widetilde{G}$ for which $v(R) \geq 0$, $v$ has a center on $\widetilde{P}$ if and only if
    \begin{idea}\label{vcrit} for all  $\chi \in X$,  there exists  $y \in Y$ such that $v(i(y)^\chi \underline{Z}^{\chi}) \geq 0$.\end{idea} 
  \item[(iii)] A positivity condition: the map of abelian groups $\phi: Y \to X$ such that
    \[\frac{\sigma(y_1 + y_2)}{\sigma(y_1)\sigma(y_2)} = \tau(y_1, \phi(y_2))\] 
    for all $y_1, y_2 \in Y$ satisfies the property that $\tau(y, \phi(y)) \in I$ for all $y \in Y - 0$. This is to make sure that $\Nt$ is an extension to $\Pt$ of a genuine object of $\DD_{\amplegg}^{\spli}$. 
  \end{enumerate}
\end{definition}
We give credit to Mumford \cite[Definition 2.1]{mumford_compositio} for the exact statement of the completeness condition, which is the specialization of \cite[Ch. III, Definition 3.1]{FC} to the totally degenerate situation that we have put ourselves in.
\begin{proposition}
Every object $(\widetilde{G}, i: Y_K \to \widetilde{G}_K)$ of $\DD_{g, g}^{\spli}$ admits a relatively complete model.
\end{proposition}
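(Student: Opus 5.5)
The plan is to follow Mumford's original construction \cite[\S 2--3]{mumford_compositio}, in the form of \cite[Ch. III, \S 3]{FC} and \cite[\S 6.2]{lan}, specialized to the split totally degenerate case. The idea is to build $\Pt$ as the relative $\Proj$ of an explicit graded $R$-subalgebra of $K[X][\theta]$. This algebra is generated by "theta monomials" whose coefficients come from a quadratic trivialization $\sigma$ attached to the positivity datum $\phi$.

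\textbf{Step 1: the quadratic datum.} Choose $\phi: Y \to X$ as in \Cref{DDdef}(3), so that $\tau(-,\phi(-))$ is symmetric and $\tau(y,\phi(y)) \in I$ for $y \neq 0$. Replacing $\phi$ by $2\phi$ keeps these properties. After this replacement we can find a function $\sigma: Y \to K^\times$ satisfying
\[\sigma(y_1+y_2) = \sigma(y_1)\sigma(y_2)\,\tau(y_1,\phi(y_2)).\]
To construct it, fix a basis $e_1,\ldots,e_g$ of $Y$, write $y = \sum_j n_j e_j$, and set
\[\sigma(y) = \prod_{i<j} \tau(e_i,\phi(e_j))^{n_in_j} \cdot \prod_j \tau(e_j,\phi(e_j))^{n_j^2/2}.\]
The doubling of $\phi$ is exactly what makes each $\tau(e_j,\phi(e_j))$ a square, so these half-integer exponents make sense. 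This $\sigma$ will be the cocycle in \Cref{rcm}(5), and then the positivity condition \Cref{rcm}(iii) holds by construction.

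\textbf{Step 2: the model.} Consider the graded ring $K[X][\theta]$ with $\theta$ in degree $1$, and the elements $\xi_{y,\chi} := \sigma(y)\tau(y,\chi)\,\underline{Z}^{\phi(y)+\chi}\theta$. Let $A \subset K[X][\theta]$ be the $R$-subalgebra generated by the $\xi_{y,0}$ for $y \in Y$ together with finitely many such $\xi_{y,\chi}$, taking $\chi$ in a finite generating set of $X$ near $0$. Let $\Pt$ be the normalization of $\bigcup_{y} D_+(\xi_{y,0}) \subset \Proj A$, and let $\Nt := \mathcal{O}(1)$.
\begin{itemize}
\item The action of $\Gt$ comes from the $X$-grading.
\item The action of $Y$ is $y' \cdot \underline{Z}^\chi = \tau(y',\chi)\underline{Z}^\chi$, $\theta \mapsto \theta$. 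By the cocycle identity for $\sigma$ this sends $\xi_{y,\chi}$ to a unit multiple of $\xi_{y+y',\chi'}$, so it preserves $A$. This gives both the $Y$-action on $\Pt$ and the $Y$-linearization of $\Nt$.
\end{itemize}
Integrality of $\Pt$, density of $\Gt = D_+(\theta)$ (inverting the degree-zero part), and ampleness of $\Nt$ in the sense of the footnote are then formal consequences of the construction.

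\textbf{Step 3: the conditions (i) and (ii).} For (i), take $U := \bigcup_{y \in F} D_+(\xi_{y,0})$, where $F$ is a finite set of $y$ near $0$ containing a fundamental domain for the action of $Y$ on the index set. The $Y$-translates of $U$ then cover $\Pt$, and $U$ is of finite type because each chart is the Spec of a finitely generated $R$-algebra. The hard part is to prove this last finiteness: the degree-zero localization $A_{(\xi_{y,0})}$ must be finitely generated. The key input is the positivity $\tau(y,\phi(y)) \in I$, which implies that $v(\sigma(y))$ grows quadratically in $y$ along any valuation centered on $I$. Consequently only finitely many ratios $\xi_{y',\chi}/\xi_{y,0}$ fail to be $I$-multiples of products of the others.

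The same quadratic-versus-linear estimate drives the valuative criterion \Cref{vcrit}. Given $v$ with $v(R)\ge 0$, consider $y \mapsto v(\sigma(y)\underline{Z}^{\phi(y)})$. This function is a positive quadratic plus a linear term in $y$, so it attains a minimum at some $y_0$. Then $v$ is nonnegative on the chart $D_+(\xi_{y_0,0})$, which gives the center, and the inequality $v(i(y)^\chi\underline{Z}^\chi)\ge 0$ for suitable $y$ falls out of the minimality of $y_0$. Conversely, a center forces these inequalities.

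I expect the main obstacle to be the finite-type claim together with the "only if" direction of (ii) for valuations of higher rank. In the general case of \cite[Ch. III, \S 3]{FC} this is where one passes to a $Y$-invariant star-shaped polyhedral decomposition of $X^\vee_{\BR}$. I would try first to reduce to rank-one valuations via the standard characterization of centers, and then use the Delaunay/Voronoi-type decomposition attached to the positive quadratic form $\tau(y,\phi(y))$.
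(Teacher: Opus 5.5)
You are following the route of the references the paper cites (Mumford's $\Proj$ construction). The paper itself only cites them, and for the Kuga--Sato case builds explicit toric models in \Cref{KSDD} instead. However, your Step~2 as written produces the wrong space.

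\textbf{The $Y$-action.} The action cannot fix $\theta$. With $\theta\mapsto\theta$ you get $\xi_{y,0}\mapsto\tau(y',\phi(y))\,\xi_{y,0}$. In general this is not in $A$: for $g=w=1$ with your doubled data it is $q^{2yy'}\xi_{y,0}$. The induced linearization of $\mathcal{O}(1)$ on $\Gt_K$ is then the trivial one, not $\sigma$, and the cocycle identity you invoke is never used. The correct action is $\underline{Z}^\chi\mapsto\tau(y',\chi)\underline{Z}^\chi$, $\theta\mapsto\sigma(y')\underline{Z}^{\phi(y')}\theta$. For this action, the cocycle identity together with the symmetry of $\tau(-,\phi(-))$ gives exactly $\xi_{y,\chi}\mapsto\xi_{y+y',\chi}$.

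\textbf{The open subset.} With the corrected action, $\xi_{y,0}$ is the $y$-translate of $\xi_{0,0}=\theta$. So, granting $D_+(\theta)=\Gt$ as you assert, $D_+(\xi_{y,0})=y\cdot\Gt$, and your $\Pt$ is $\bigcup_{y\in Y}y\cdot\Gt$. This is exactly the locus whose complement is the boundary removed in step (4) of \Cref{mumford-construction}. Its special fiber is a disjoint union of tori, and \Cref{rcm}(ii) fails.

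Concretely, take $g=w=1$ with your data $\phi(y)=2y$, $\sigma(y)=q^{y^2}$, $\chi\in\{0,\pm1\}$, and the monomial valuation with $v(q)=2$, $v(Z)=-1$. It satisfies \Cref{vcrit}, since $v(q^{y\chi}Z^\chi)=\chi(2y-1)$. Yet $v(\xi_{0,1}/\xi_{y,0})=-1-2y(y-1)<0$ for every $y$. Its center on $\Proj A$ is the node in $D_+(\xi_{0,1})$, which lies in no $D_+(\xi_{y,0})$. The fact that the finite-type claim is vacuous for your $\Pt$ is a symptom of the same problem.

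\textbf{The fix and the valuative criterion.} Take $\Pt$ to be the normalization of all of $\Proj A=\bigcup_{y,\chi}D_+(\xi_{y,\chi})$, with $U=\bigcup_{\chi}D_+(\xi_{0,\chi})$. In (ii), minimize $v(\xi_{y,\chi})$ over all generators, not just those with $\chi=0$.

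The existence of that minimizer is precisely where \Cref{vcrit} is used. In the Kuga--Sato case it confines each $v(Z_i)$ to an interval $[-Nv(q),Nv(q)]$, so the quadratic growth of $v(\sigma(y))$ dominates. Your sentence ``it attains a minimum'' uses no hypothesis, and would give a center to every valuation; consider the $Z$-adic valuation, with $v(q)=0$, $v(Z)=1$. Argued this way, valuations of arbitrary rank are handled directly, so no reduction to rank one is needed. None is standard anyway; compare the elimination-of-quantifiers step in \Cref{model-works}.

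\textbf{Two points still open.}
\begin{enumerate}
\item The star and $\phi$ must be chosen with $\sigma(y)\tau(y,\chi)\in R$ for all $y\in Y$ and all $\chi$ in the star; otherwise $D_+(\theta)$ lies over $K$. Doubling does not achieve this in general. For $\tau(y,\chi)=q^{\langle y,\chi\rangle}$ and $\phi=2\phi_0$ with $\phi_0$ given by $\left(\begin{smallmatrix}2&3\\3&5\end{smallmatrix}\right)$, the pair $y=(3,-2)$, $\chi=-e_1$ gives $q^{-1}$. So you must pass to a sufficiently large multiple of $\phi$.
\item The finite generation of the rings $A_{(\xi_{y,\chi})}$ is the substantive step in the cited constructions, and in your sketch it is still only heuristic.
\end{enumerate}
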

\begin{proof}
See \cite[Theorem 2.5]{mumford_compositio}, or \cite[Proposition 3.3 and remarks above Lemma 3.2]{FC}, or \cite[Proposition 4.5.1.15 and Corollary 4.5.1.8]{lan}. These references prove it in the most general situation for the same reasons, but we do not go into it because the $\Proj$ construction of relatively complete models that is used here will not be convenient for our ultimate goals. Instead, in the specific case of the Kuga--Sato degenerations, we will construct bespoke relatively complete models by means of infinite-type toric geometry, as suggested but not carried out by \cite{FC} and \cite{mumford_compositio}. 
\end{proof}
Mumford's construction provides a functor
\[M: \DD_{g, g}^{\spli} \to \DEG_{g, g}^{\spli}\]
given as follows (we already summarized this in the special case of the Tate curve, in \Cref{qt}). 
\begin{construction}\label{mumford-construction} 
  Let $(\widetilde{G}, i: Y_K \hookrightarrow \widetilde{G}_K)$ be an object in $\DD_{g, g}^{\spli}$. Let $(\widetilde{P}, \widetilde{\mathscr{N}})$ be a relatively complete model for it.
  \begin{enumerate}
  \item Let $\Pt^\wedge$ be the formal completion along the ideal $I$, which is formed from the system of $R/I^k$-schemes $\Pt_k := \Pt \times_R (R/I^k)$ over all $k \geq 1$.
  \item The action of $Y$ on $\widetilde{P}$ induces an action of $Y$ on each $\widetilde{P}_k$. Unlike $\widetilde{P}$, the quotient of $\widetilde{P}_k$ by $Y$ as an fpqc sheaf is a scheme $P_k$ such that the quotient map is an \'etale surjection $\widetilde{P}_k \to P_k$. The ample line bundle $\widetilde{\mathscr{N}}_k = \widetilde{\mathscr{N}} \times_R R/I^k$ descends through the quotient to a line bundle $\mathscr{N}_k$ on $P_k$ which is ample by the Nakai--Moishezon criterion. The most complete available explanation of this step may be found in \cite[Proposition 4.5.2.15]{lan}, and we note that it uses both conditions (i) and (ii) in the definition of a relatively complete model. 
  \item The formal scheme $\mathfrak{P} := (P_k)_{k \geq 1}$ is proper over $\Spf(R)$ and admits an ample line bundle $(\mathscr{N}_k)$, so by Grothendieck's algebrization theorem \cite[Th\'eor\`eme (5.4.5)]{EGAIII}, it is uniquely the formal $I$-adic completion of a proper $R$-scheme $X$.
  \item We define a closed subscheme $C \subset X$ via \cite[Corollaire (5.1.8)]{EGAIII} by defining $\widehat{C} \subset \widehat{X} = \mathfrak{P}$ over $R/I^k$ to restrict over $\Spec(R/I^k)$ as 
    \[C_k  := \left(\widetilde{P} - \bigcup_{y \in Y} y \cdot \widetilde{G}\right)/Y \subset P_k.\] 
    One then obtains an $R$-scheme $G := X - C$. 
  \end{enumerate}
\end{construction}

Mumford \cite{mumford_compositio} proved the following facts about $G$, for which again we cite the very detailed exposition of \cite{lan}.
\begin{enumerate}
\item $G$ is an object of $\DEG_{g, g}^{\spli}$ --- see \cite[Corollaries 4.5.3.8, 4.5.3.9, and 4.5.3.13]{lan} for the fact that it is an object of $\DEG$. The fact that it is split and totally degenerate of torus rank $g$ is a consequence of the canonical isomorphism $\widehat{G} \simeq \Gt^\wedge$. 
\item The construction of $G$ from $(\widetilde{G}, i: Y_K \hookrightarrow \Gt_K)$ does not depend on the choice of relatively complete model $(\Pt, \Nt)$ --- see \cite[Corollary 4.5.3.7]{lan}.
\item Now that (1) and (2) have established that $M$ provides a map from objects of $\DD_{g, g}^{\spli}$ to objects of $\DEG_{g, g}^{\spli}$, it is suitable to remark that both of them require as main input \cite[Theorem 4.5.3.6]{lan}, which also establishes the functoriality of $M: \DD_{g, g}^{\spli} \to \DEG_{g, g}^{\spli}$. Again we leave the details of the proofs to \cite{lan}, as they are not relevant to the directions pursued here. 
\end{enumerate}

We now describe the variant for degeneration data of line bundles.
\begin{definition}\label{DDIS}
  The category $\DD_{\ISgg}^{\spli}$ has as its objects the triples $(\widetilde{G}, i: Y \hookrightarrow \widetilde{G}_K, \widetilde{\mathscr{L}})$, where the first two entries form an object of $\DD_{g, g}^{\spli}$ and the last entry $\widetilde{\mathscr{L}}$ is a rigidified line bundle on $\widetilde{G}$. By \cite[Corollary 3.2.5.7]{lan}, there is only one choice for $\widetilde{\mathscr{L}}$, namely $\mathcal{O}_{\widetilde{G}}$ with the canonical rigidification. But in a degeneration datum, the symbol $\widetilde{\mathscr{L}}$ also contains the data of a $Y$-equivariant structure on $\widetilde{\mathscr{L}}_K$, for the action of $Y$ on $\widetilde{G}_K$ given by the inclusion $i$ and the group structure on $\widetilde{G}(K)$. The $Y$-equivariant structure is also required to satisfy a \emph{finiteness condition} which is given further down in \Cref{finiteness} once a bit more notation has been set up. 
\end{definition}
The set of isomorphism classes of equivariant structures as in \Cref{DDIS} is in explicit bijection with the group cohomology $H^1(Y, \mathcal{O}(\widetilde{G}_K)^\times)$, where $Y$ acts on $\mathcal{O}(\widetilde{G}_K)^\times$ via $y: f(\underline{Z}) \mapsto f(i(y) \cdot \underline{Z})$. Since every $\alpha \in \mathcal{O}(\widetilde{G}_K)^\times$ is of the form $r \cdot \underline{Z}^\chi$ for $\chi \in X$ and $r \in K$, such a cocycle must be of the form
  \[y \mapsto \alpha(y) := \zeta(y) \cdot \underline{Z}^{\phi(y)}\]
  where $\zeta: Y \to K^\times$ and $\phi: Y \to X$ are maps of sets. The cocycle condition on $\alpha$ works out to the additional hypotheses that 
  \[\frac{\zeta(y_1 + y_2)}{\zeta(y_1) \zeta(y_2)} = \tau(y_1, \phi(y_2)) \qquad \text{ and } \phi \text{ is a group homomorphism}.\]

  We retain the notation $\zeta$ for the $K^\times$-part of the cocycle  and $\phi$ for the $X$-part of the cocycle, and we consider those to be part of the degeneration datum (that is, part of the data of $\Lt$, which includes the $Y$-equivariant structure on its generic fiber). The \emph{finiteness condition} is related to the fact that degenerations always admit an ample line bundle $\mathscr{N}$ \cite[XI 1.1.16]{raynaud}, and a line bundle $\mathscr{L}$ on a degeneration always has the property that $\mathscr{L} \otimes \mathscr{N}^n$ is ample for $n \gg 0$ \cite[Corollaire (4.5.8)]{EGAII}. It is given by:  
  \begin{idea}\label{finiteness}there exists  $n \geq 1$ and  $\phi': Y \to X$  satisfying the positivity condition of \Cref{DDdef} such that  $\zeta(y) \tau(y, \phi'(y))^n \in R$ for all  $y \in Y$\end{idea}
  \begin{remark}
    The isomorphism class of $\widetilde{\mathscr{L}}$ as a $Y$-equivariant structure on $\mathcal{O}_{\widetilde{G}_K}$ is determined by $\alpha$ as a cocycle for $H^1(Y, \mathcal{O}(\widetilde{G}_K)^\times)$. The coboundaries defining that cohomology group are given by the $1$-cochains of the form
    \[y \mapsto (i(y)^* f) \cdot f^{-1}, \qquad f = r \underline{Z}^\chi \in K^\times X.\]
    This is the same as
    \[y \mapsto \tau(y, \chi).\]
    So the map $\phi: Y \to X$ is a uniquely determined property of an isomorphism class in $\DD_{\ISgg}$, and the quadratic function $\zeta: Y \to K^\times$  is uniquely determined up to multiplication by multiplicatively linear functions of the form $y \mapsto \tau(y, \chi)$ for $\chi \in X$. So the data  $(\zeta, \phi)$ that comes with an object of $\DD_{\ISgg}^{\spli}$ is well-defined up to multiplication of $\zeta$ by functions of the form $y \mapsto \tau(y, \chi)$. 
  \end{remark}

  As indicated above, the finiteness condition \Cref{finiteness} (by \cite[Lemma 4.4.14]{lan}) implies that $\Lt$ is a difference $\Lt = \Lt_1 \otimes \Lt_2^{-1}$, where $(\Gt, i, \Lt_1)$ and $(\Gt, i, \Lt_2)$ are objects of $\DD_{\amplegg}^{\spli}$, the category defined by
  \begin{definition}
    $\DD_{\amplegg}^{\spli}$ is the full subcategory of $\DD_{\ISgg}^{\spli}$ whose objects are the triples $(\Gt, i: Y_K \hookrightarrow \Gt_K, \Lt)$ forming objects of $\DD_{\ISgg}^{\spli}$ but also satisfying the \emph{positivity condition},
    \begin{idea}
      $\tau(y, \phi(y)) \in I$ for all $y \in Y - \{0\}$. 
    \end{idea}
  \end{definition}
  In particular, the positivity condition we required in the definition of the objects of $\DD_{g, g}^{\spli}$ (\Cref{DDdef}) amounts to requiring that the objects of $\DD_{g, g}^{\spli}$ all admit an extension to $\DD_{\amplegg}^{\spli}$, as suggested in the text after \Cref{DDdef}.  

  The functor $M:  \DD_{g, g}^{\spli} \to \DEG_{g, g}^{\spli}$ constructed in \Cref{mumford-construction} is extended by \cite[Ch. III]{FC} to functors $M_{ample}$ and $M_{IS}$ fitting into the diagram
  \begin{idea}\label{Mdiag} 
  \begin{center}
    \begin{tikzcd}
                                                      & {\DD_{g, g}^{\spli}} \arrow[r, "M"]                                      & {\DEG_{g, g}^{\spli}}                              &                                  \\
                                                      & \DD_{\amplegg}^{\spli} \arrow[u] \arrow[r, "M_{ample}"] \arrow[ld, hook] & \DEG_{\amplegg}^{\spli} \arrow[u] \arrow[rd, hook] &                                  \\
\DD_{\ISgg}^{\spli} \arrow[ruu] \arrow[rrr, "M_{IS}"] &                                                                          &                                                    & \DEG_{\ISgg}^{\spli} \arrow[luu]
\end{tikzcd}
  \end{center}
\end{idea}
  The definition of the functors $M_{ample}$ and $M_{IS}$ is a somewhat more subtle point than one might expect. We discuss this in detail, because our proof of \Cref{mainthm} and its relatives are most conveniently proved using a slightly modified version of the original construction --- we do not claim it is an original idea, as the fact that it can be done this way is already hinted at in \cite[p. 72]{FC}. In the remainder of this section, we recall the construction of $M_{IS}, M_{ample}$ \cite{FC, lan}; we will return to state our modified version in \Cref{modified}, by which point we will have introduced enough material from \cite{FC, lan} to be able to prove that our version agrees with the original functors $M_{IS}$, $M_{ample}$. 

\begin{definition}
A \emph{Mumford-ample relatively complete model} $(\widetilde{P}, \widetilde{\mathscr{N}})$ for an object $(\Gt, i, \Nt)$ of $\DD_{\ISgg}^{\spli}$ is a relatively complete model $(\widetilde{P}, \Nt)$ for $(\Gt, i: Y \hookrightarrow \Gt_K)$ with the property that the $Y$-equivariant structure on $\Nt$ extends the one that is part of the data of $(\Gt, i,  \Nt)$. Let $\DD_{\ISgg}^{\spli, \Mample}$ denote the full subcategory of $\DD_{\ISgg}^{\spli}$ whose objects are those admitting a Mumford-ample relatively complete model. \emph{A priori}, we already know that $\DD_{\ISgg}^{\spli, \Mample}$ is a full subcategory of $\DD_{\amplegg}^{\spli}$, because of the positivity condition in \Cref{rcm}(iii).
\end{definition}

\begin{construction}[Faltings--Chai definition of $M_{IS}$]\label{FCdef}
  Let $D = (\Gt, i: Y \hookrightarrow \Gt_K, \Lt)$ be an object of $\DD_{\ISgg}^{\spli, \Mample}$. Then $M(D)$ is the object $(G, \LL)$ of $\DEG_{\amplegg}$ whose underlying degeneration $G$ is defined by \Cref{mumford-construction} for any Mumford-ample relatively complete model of $D$. To define a line bundle $\LL$ on $G$ extending $G$ to an object of $\DEG_{\ISgg}^{\spli}$, note in step (3) of \Cref{mumford-construction} that by \cite[Th\'eor\`eme (5.4.5)]{EGAIII}, the line bundle $\Nt^\wedge$ on $\mathfrak{P}$ algebrizes to an ample line bundle on the proper $R$-scheme $X$ (in fact we used the existence of $\Nt^\wedge$ to produce the algebrization of $\mathfrak{P}$ to begin with, by \cite[Th\'eor\`eme (5.4.5)]{EGAIII}). By restriction to $G \subset X$, one then obtains an ample rigidified line bundle $\LL$ on $G$, thus defining the object $M_{IS}(D) := (G, \LL)$ of $\DEG_{\amplegg}^{\spli}$. 

  An arbitrary degeneration datum $D$ in $\DD_{\ISgg}^{\spli}$ can be written as a \emph{difference} of Mumford-ample degeneration data given by $\Lt_1, \Lt_2$ --- again, this is a consequence of the finiteness condition (see \cite[Construction 4.5.4.6]{lan}). Then one defines $M_{IS}: \DD_{\ISgg}^{\spli} \to \DEG_{\ISgg}^{\spli}$ to be
  \[M_{IS}(\Lt_1 \otimes \Lt_2^\vee) := M_{IS}(\Lt_1) \otimes M_{IS}(\Lt_2)^{-1}.\]
  It is known that this does not depend on the choice of $\LL_1$ and $\LL_2$ and defines a functor $M_{IS}: \DD_{\ISgg}^{\spli} \to \DEG_{\ISgg}^{\spli}$ \cite[Corollary 4.5.4.9]{lan}.  
\end{construction}

One point that is implicit in \cite{lan} but not explicitly stated in the proof of \cite[Corollary 4.5.4.9]{lan} is:
\begin{lemma}
The functor $M_{IS}$, when restricted to objects in $\DD_{\amplegg}^{\spli}$, outputs objects of $\DEG_{\amplegg}^{\spli}$.
\end{lemma}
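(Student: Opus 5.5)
The plan is to reduce to the Mumford-ample case by multiplying by a sufficiently high power of an auxiliary Mumford-ample datum, and then use that ampleness of a line bundle on the generic fiber $G_K$ is detected by a positive power.

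Let $D = (\Gt, i, \Lt)$ be an object of $\DD_{\amplegg}^{\spli}$ with associated data $(\zeta, \phi)$, so that $\tau(y,\phi(y)) \in I$ for all $y \neq 0$. Let $(G, \LL) := M_{IS}(D)$; we must show that $\LL_K$ is ample on the abelian variety $G_K$.

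\textbf{Step 1: a Mumford-ample multiple.} Fix an auxiliary Mumford-ample datum $D_0 = (\Gt, i, \Nt)$ with a Mumford-ample relatively complete model; one exists by the proposition on existence of relatively complete models. Let $(\zeta_0, \phi_0)$ be its data. Consider the tensor power $D^{\otimes n}$, with data $(\zeta^n, n\phi)$. I claim that for $n \gg 0$, after possibly modifying $\zeta^n$ by a coboundary $y \mapsto \tau(y,\chi)$, the datum $D^{\otimes n} \otimes D_0$ is Mumford-ample.

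Its $\phi$-part is $n\phi + \phi_0$, which is positive because both $\phi$ and $\phi_0$ satisfy positivity and $I$ is an ideal. The hard part is producing a Mumford-ample relatively complete model. I would use the construction of \cite[Construction 4.5.4.6]{lan} / \cite[Ch. III]{FC}. In that construction, any datum satisfying positivity and the finiteness condition \Cref{finiteness} admits, after raising to a power, a relatively complete model via the $\Proj$ of the graded algebra generated by the sections $\zeta(y)\underline{Z}^{\phi(y)}$. Finiteness holds for $D^{\otimes n}\otimes D_0$ automatically, since it holds for $D$ and for $D_0$.

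So the key point is: \emph{positivity plus finiteness implies Mumford-ample after replacing by a positive power}. This is exactly what \cite[Proposition 4.5.1.15]{lan} gives when applied to data in $\DD_{\amplegg}^{\spli}$. Hence some power $D^{\otimes m}$, with $m \geq 1$, lies in $\DD_{\ISgg}^{\spli,\Mample}$.

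\textbf{Step 2: transfer through $M_{IS}$.} Since $M_{IS}$ is defined multiplicatively on differences and is independent of the chosen decomposition \cite[Corollary 4.5.4.9]{lan}, it is compatible with tensor powers: $M_{IS}(D^{\otimes m}) \cong (G, \LL^{\otimes m})$. By \Cref{FCdef} in the Mumford-ample case, $\LL^{\otimes m}$ is the restriction of the algebrization of $\Nt^\wedge$. That algebrization is ample on the proper scheme $X$ by \cite[Th\'eor\`eme (5.4.5)]{EGAIII}. Therefore $\LL^{\otimes m}_K$ is ample on $G_K \subset X_K$, since $G_K = X_K$ as $C_K$ is empty.

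\textbf{Step 3: conclude.} Ampleness is insensitive to positive powers, so $\LL_K$ is ample. By \Cref{ampledef}, this means $(G, \LL) \in \DEG_{\amplegg}^{\spli}$.

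The main obstacle is Step 1: verifying that positivity of $\phi$ together with finiteness genuinely yields a Mumford-ample relatively complete model for some power. This requires checking the hypotheses of Lan's construction in the non-symmetric case, which is the subtlety flagged in the footnote to \Cref{ampledef}. If that check fails, I would instead show directly that the isogeny $\lambda_{\LL_K}$ has finite kernel and use the Riemann–Roch/Mumford criterion: a nondegenerate line bundle on an abelian variety with $H^0(\LL^m_K) \neq 0$ for some $m>0$ is ample. To get the nonvanishing, I would exhibit an explicit nonzero theta series $\sum_y \zeta(y)^m\underline{Z}^{m\phi(y)}$, which converges $I$-adically by positivity, and descend it as a section via the $i=0$ Fourier expansion principle.
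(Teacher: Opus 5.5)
The gap is in Step 1, namely the claim that some plain power $D^{\otimes m}$ is Mumford-ample. In the paper, \cite[Proposition 4.5.1.15]{lan} is invoked only to produce \emph{some} relatively complete model, i.e.\ an auxiliary Mumford-ample datum. It is not used to make a given ample datum Mumford-ample, and your claim cannot hold in the generality you need.

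To see this, combine Step 1 with your Step 2. It would show that $\LL^{\otimes m}$ is the restriction to the open subscheme $G\subset X$ of an ample bundle. Hence $\LL$ would be ample on $G$ itself, not merely on $G_K$. That is exactly the stronger statement which the footnote to \Cref{ampledef} says fails for non-symmetric $\LL$.

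The reason is visible in the cocycle. Since $\zeta(0)=1$, the cocycle relation gives $\zeta(y)\zeta(-y)=\tau(y,\phi(y))$. Moreover $\ell(y):=\zeta(y)/\zeta(-y)$ is a homomorphism $Y\to K^\times$ with $\zeta(y)^2=\tau(y,\phi(y))\,\ell(y)$. Passing to $D^{\otimes m}$ replaces $(\tau(y,\phi(y)),\ell)$ by $(\tau(y,\phi(y))^m,\ell^m)$. So the linear defect grows at the same rate as the quadratic positivity, and raising to powers does not make the latter dominate the former. Your detour through $D^{\otimes n}\otimes D_0$ has the same defect, and it is inconsistent with the conclusion you draw from it. Even if it worked, ampleness of $\LL^{\otimes n}$ twisted by the bundle attached to $D_0$ would say nothing about $\LL$.

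The missing idea is to use the combination $D^{n+1}\otimes([-1]^*D)^{n}$ instead. Its $\zeta$-part is $\zeta(y)^{n+1}\zeta(-y)^{n}=\zeta(y)\,\tau(y,\phi(y))^{n}$ and its $\phi$-part is $(2n+1)\phi$. So the linear part stays equal to $\ell$ while the quadratic part is raised to the power $2n+1$. This is \cite[Lemma 4.5.4.5]{lan}, which gives Mumford-ampleness for $n\gg 0$, and it is what the paper uses. The argument then runs as follows:
\begin{itemize}
\item applying $M_{IS}$ shows that $\LL^{n+1}\otimes[-1]^*\LL^{n}$ is ample;
\item the cubical structure gives $[2n+1]^*\LL\simeq(\LL^{n+1}\otimes[-1]^*\LL^{n})^{2n+1}$, so $[2n+1]^*\LL$ is ample;
\item ampleness descends along $[2n+1]$ because it is finite surjective on the abelian variety $G_K$.
\end{itemize}
The map $[2n+1]$ is not finite on $G$. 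This is where the generic fiber genuinely enters, and why only $\LL_K$ is concluded to be ample.

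Your fallback (finiteness of $K(\LL_K)$ plus a nonzero theta section) is a reasonable alternative in outline. As written, though, it relies on the constructive half of the $i=0$ Fourier expansion principle for a datum that has no Mumford-ample model of its own. You would have to make sure that input does not already rest on the ampleness of $\LL_K$.
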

\begin{proof}
  Let $D = (\Gt, i, \Lt)$ be an object of $\DD_{\amplegg}^{\spli}$. Now let $(G, \LL)= M_{IS}(D)$; our goal is to prove that $\LL_K$ is ample. This follows from \cite[Lemma 4.5.4.5]{lan}, which says that for all sufficiently large $n$,
  \[D^{n + 1} \otimes ([-1]^* D)^{n} \in \DD_{\amplegg}^{\spli, \Mample}.\]
  We deduce by the existence of a cubical structure on $\LL$ \cite[Ch. I, \S 2.6]{MB} that 
  \[[2n + 1]^* \LL \simeq \left(\LL^{n + 1} \otimes [-1]^*\LL^n\right)^{2n + 1} \]
  and thus that $[2n + 1]^* \LL$ is ample. So far, this does not require passing to the generic fiber. On the other hand, it is only by passing to $\LL_K$ does this imply that $\LL_K$ is ample, by \cite[Corollaire (6.6.3)]{EGAII} and \cite[Ch. V, Proposition 8.2]{CS}. 
\end{proof}

We have therefore established the existence of the promised compatible functors $M_{IS}$ and $M_{ample}$ (defined as the restriction of $M_{IS}$ to $\DD_{\amplegg}^{\spli}$), fitting into the diagram \Cref{Mdiag}.

\subsection{Uniformization and Fourier expansions}\label{raynaud}
In this section, we summarize from \cite{FC, lan} the construction of an inverse functor $F_{IS}: \DEG_{\ISgg}^{\spli} \to \DD_{\ISgg}^{\spli}$, which is constructed by extending linearly from $\DEG_{\amplegg}^{\spli}$. It is only by constructing this functor and proving that it is a quasi-inverse to $M_{IS}$ that one knows that the Mumford uniformization procedure actually uniformizes all objects of $\DEG_{\ISgg}^{\spli}$ --- so it is logically necessary in order to know that the line bundles defined in \Cref{intro2} account for all of $\DEG_{\ISgg}^{\spli}$ covering the Kuga--Sato degeneration of width $w$ and rank $g$. Both steps (the construction of $F_{IS}$ as well as the proof that it is quasi-inverse to $M_{IS}$), due to Faltings--Chai \cite[Ch. II]{FC}, use in an essential way the Fourier expansion principle established in \cite[Ch. II, Theorem 4.1]{FC} (closely related to the special case of \Cref{mainthm} in the case $i = 0$ and $\LL$ ample, but for a general degeneration --- see \cite[Ch. III, Theorem 6.2]{FC} and better yet \cite[Corollary 4.5.4.24]{lan}). 

Since the construction of $M$ is not literally by quotienting $\widetilde{G}$ by $Y$, it is not \emph{a priori} obvious that every object of $\DEG_{\amplegg}^{\spli}$ can be uniformized using $M_{ample}$, nor is it obvious that the basic functor $M: \DD_{g, g}^{\spli} \to \DEG_{g, g}^{\spli}$ is essentially surjective. Therefore, Faltings--Chai devised the following method, based on the technique of \emph{Raynaud extensions}. Given an object $(G, \Lt)$ in $\DEG_{\amplegg}^{\spli}$, one canonically associates to $G$ a split torus over $R$ by taking the formal completion $\widehat{G}$ along $I$, which yields a formal split torus over $\Spf R$ since the deformation theory of a split torus is trivial by \cite[Expos\'e IX Th\'eor\`eme 3.6]{SGA3}. That formal split torus is uniquely algebrizable as the formal completion of a split torus $\widetilde{G}$ defined by character lattice $X$, which is the torus that we expect to uniformize $G$ via $M$.

In order to uniformize $G$ using $\widetilde{G}$, one still needs to determine the correct periods $i: Y \to \widetilde{G}_K$ to quotient by (i.e. plug into the functor $M$). This is the non-obvious part that Faltings--Chai innovated to solve. The key idea in \cite[Ch. II]{FC} is that, analogously to the analytic case as explained in \Cref{ana}, the extra data of $i: Y \to \widetilde{G}_K$ can be read off of the Fourier expansion (a concept we explain in the next paragraph, expanding on \Cref{periods-recover}) of algebraic theta functions on $G$, that is, nonzero global sections in $H^0(G_K, \mathscr{L}_K)$. A relevant subtle technical point\footnote{We point the reader only to \cite{lan}, as its arguments are a correction of a circular argument originally appearing in \cite{FC} --- see \cite[Remark 4.2.1.17]{lan}.} \cite[\S 4.3.4 and \S 4.5.5]{lan} is that while reading $i$ off of $G$ requires a choice of extension $(G, \mathscr{L})$ of $G$ to $\DEG_{\amplegg}^{\spli}$, it is not sensitive to the choice of $\LL$ and is in fact functorial.

Having replaced the analytic mechanism of pulling back to the universal cover via the covering map with the algebraic mechanism of passing to $\widehat{\widetilde{G}}$ by taking formal completion, the mechanism for the existence of Fourier expansions also changes along those lines: the ``Fourier expansion'' map is the natural map
\[H^0(G, \mathscr{L}) \hookrightarrow H^0(\widehat{G}, \widehat{\mathscr{L}}), s \mapsto \widehat{s}\]
which also applies to the generic fiber just by applying $- \otimes_R K$ to get
\[H^0(G_K, \LL_K) \hookrightarrow H^0(\widehat{G}, \widehat{\mathscr{L}}) \otimes_R K.\]
The data of a rigidification on $\mathscr{L}$ is the same as a cubical structure on it, by \cite[Ch. I, \S 2.6]{MB}. By the proof of \cite[Corollary 3.3.3.3]{lan}, the line bundle $\mathscr{L}^\wedge$ on $\widehat{G}$ is trivial. So $\widehat{G}$ being the formal completion of a split torus $\Gt$ over $R$, and $\widehat{\mathscr{L}}$ being the formal completion of its structure sheaf, every $\widehat{s} \in H^0(\widehat{G}, \widehat{\mathscr{L}}) = H^0(\Gt^\wedge, \widehat{\mathcal{O}})$ has a ``Fourier expansion'' 
\[\widehat{s} = \sum_{\chi \in X} \sigma_\chi(s) \cdot \underline{Z}^\chi,\]
where the $\sigma_\chi(s) \in R$ must converge $I$-adically to zero (this is another way of saying that for all finite $k \geq 1$, $s|_{R/I^k}$ lives in $H^0(\BG_m^g \times_R R/I^k, \mathcal{O})$ and is therefore a finite sum of various $\chi \in X$ weighted by elements of $R/I^k$, hence is an infinite sum that is finite mod $I^k$ for all $k \geq 1$). Therefore, every
\[s \in H^0(G_K, \mathscr{L}_K) = H^0(G, \mathscr{L}) \otimes K\]
has a Fourier expansion which is an infinite sum
\[s = \sum_{\chi \in X} \sigma_\chi(s) \cdot \underline{Z}^\chi,\]
where $\sigma_{\chi}(s) \in K$ can all be multiplied by a common element $r \in R$ to make them all reside in $R$, and $r \sigma_\chi(s)$ converge $I$-adically to zero. 

To explain how Faltings--Chai extract the periods $i: Y \to \widetilde{G}_K$ from the functionals $\sigma_\chi$, we need some notation related to the dual of $G$, following \cite[Ch. II, \S 2]{FC}. The dual abelian variety $G_K^\vee$ extends to a degeneration $G^\vee$ for which the formal completion along $I$ is a formal rank $g$ split torus with character lattice defined to be $Y$. And the polarization $\phi_{\mathscr{L}_K}: G_K \to G_K^\vee$ extends to a map of degenerations $\phi_{\mathscr{L}}: G \to G^\vee$, which induces a map $\phi: Y \to X$ on character lattices after passing to formal completions as above.  

In analogy to (\ref{fourierpairing}), Faltings--Chai \cite[Theorem II.4.1]{FC} proved that $s \in H^0(G_K, \mathscr{L}_K)$ have Fourier coefficients $\sigma_\chi(s)$ satisfying $\sigma_\chi(-) \neq 0$ as a functional on $H^0(G_K, \mathscr{L}_K)$ and 
\[\sigma_{\chi + \phi(y)}(s) = \zeta(y)\tau(y, \chi)\sigma_{\chi}(s)\]
where $\zeta: Y \to K^\times$ and $\tau: Y \times X \to K^\times$ is bimultiplicative and satisfies
\begin{equation}\label{coeffs-qp}\frac{\zeta(y_1 + y_2)}{\zeta(y_1)\zeta(y_2)} = \tau(y_1, \phi(y_2)) \qquad \text{for all } y_1, y_2 \in Y.\end{equation}
The function $\zeta$ has to do with the specific choice of line bundle $\mathscr{L}$ and the trivialization of $\widehat{\mathscr{L}}$, but \cite[\S 4.3.4 and \S 4.5.5]{lan} shows that the pairing $\tau$ only depends on $G$ --- indeed, $\tau$ defines embeddings that we may suggestively call $i: Y \hookrightarrow \widetilde{G}_K$ and $i^\vee: X \hookrightarrow \widetilde{G}^\vee_K$, where $\widetilde{G}$ and $\widetilde{G}^\vee$ are the split tori whose formal completions are $\widehat{G}$ and $\widehat{G^\vee}$, and whose character lattices are defined to be $X$ and $Y$, respectively. The embedding $i$ is precisely the one we would hope to extract from a degeneration in order to uniformize it by a degeneration datum. And the function $\zeta: Y \to K^\times$ further defines a $Y$-equivariant structure on the $\Gt_K$-fiber of the structure sheaf on $\Gt$; that is, $\zeta$ and $\tau$ together are equivalent to the data of an object of $\DD_{\amplegg}^{\spli}$ (part of the content of \cite[Ch. II, Theorem 4.1]{FC} is that they satisfy the required positivity condition) via the discussion after \Cref{DDIS}. This procedure defines a functor $F_{ample}: \DEG_{\amplegg}^{\spli} \to \DD_{\amplegg}^{\spli}$ fitting into a diagram
\begin{center}
\begin{tikzcd}
\DD_{\amplegg}^{\spli} \arrow[d] & \DEG_{\amplegg}^{\spli} \arrow[l, "F_{ample}"] \arrow[d] \\
{\DD_{g, g}^{\spli}}             & {\DEG_{g, g}^{\spli}} \arrow[l, "F"]                    
\end{tikzcd}
\end{center}
 
Finally, the main theorem of \cite[Ch. II-III]{FC} is that $F$ and $M$ are quasiinverse functors, thus proving the analog of the Appell--Humbert theorem for degenerating families of polarized abelian varieties in the sense that every polarizing line bundle on a split total degeneration $G$ can be uniformized by a trivial line bundle. The proof in \cite{FC} of this theorem goes through \cite[Ch. III, Theorem 6.2]{FC}, which is a complete construction of a basis of Fourier expansions that can appear in $H^0(G, \mathscr{L}) \otimes K$, where $(G, \mathscr{L})$ is already given as $M(\widetilde{G}, i: Y \hookrightarrow \widetilde{G}_K, \Lt)$ for some input in $\DD_{\amplegg}^\spli$. We pull the statement from \cite[Corollary 4.5.4.24]{lan} because it is stated in a way that is most notationally similar to how we wish to view the Fourier expansion principle in terms of group cohomology of the lattice of periods $Y$:
\begin{theorem}\label{definite}
  The global sections $H^0(G_K, \LL)$ are canonically identified with the $Y$-fixed points $H^0(Y, V)$, where $V$ is the $K$-vector space of formal power series
  \[\vartheta(\underline{Z}) = \sum_{\chi \in X} \sigma_\chi(\vartheta) \underline{Z}^\chi\]
  such that $\sigma_\chi(\vartheta) \in K$ are such that there exists $r \in R$ with $r \cdot \sigma_\chi(\vartheta) \in R$ for all $\chi \in X$, and such that $ r \vartheta(i(y) \cdot \underline{Z})$ has coefficients going to zero for all $y \in Y$; and the $Y$-action on $V$ is given by
  \[(y \cdot \vartheta)(\underline{Z}) := \zeta(y) \underline{Z}^{\phi(y)}\vartheta(i(y) \cdot \underline{Z}),\] 
  that is, by sending the $\chi$-th Fourier coefficient $\sigma_\chi$ to $\sigma_{\chi + \phi(y)} \zeta^{-1}(y)\tau^{-1}(y, \chi)$.  
\end{theorem}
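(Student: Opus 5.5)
The plan is to prove \Cref{definite} by working on a Mumford-ample relatively complete model and reducing global sections on $G_K$ to $Y$-invariant sections on the formal completion. First, reduce to the case where $D = (\Gt, i, \Lt)$ lies in $\DD_{\ISgg}^{\spli, \Mample}$. Fix a Mumford-ample relatively complete model $(\Pt, \Nt)$ with $\Nt$ extending $\Lt$. For the general case in $\DD_{\amplegg}^{\spli}$, use \cite[Lemma 4.5.4.5]{lan} to pass to $D^{n+1}\otimes([-1]^*D)^n$, or, as Faltings--Chai do, replace $\Pt$ by a model adapted to a power of $\Lt$.

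The second step identifies global sections. By \Cref{mumford-construction} and Grothendieck's existence theorem, $\LL$ on the proper $R$-scheme $X$ algebrizes $\Nt^\wedge/Y$ on $\mathfrak{P}$. Formal GAGA \cite[Th\'eor\`eme (4.1.5)]{EGAIII} then gives
\[H^0(X, \LL) = \varprojlim_k H^0(P_k, \mathscr{N}_k) = \varprojlim_k H^0(\Pt_k, \Nt_k)^Y.\]
Since $\Pt$ is integral and $\Gt$ is dense, restriction to $\Gt^\wedge$ is injective. On $\Gt^\wedge$ the bundle is trivialized, so each element becomes a series $\sum_\chi \sigma_\chi \underline{Z}^\chi$ with $\sigma_\chi \to 0$ $I$-adically. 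The $Y$-equivariant structure on $\Nt$ restricts on the generic fiber to the cocycle $\zeta(y)\underline{Z}^{\phi(y)}$. Therefore $Y$-invariance is exactly the stated condition, $(y\cdot\vartheta) = \vartheta$. After tensoring with $K$, with the inner/outer product subtlety handled by a fundamental domain $U$ from condition (i), we get an injection of $H^0(G_K,\LL_K) = H^0(X_K, \LL_K)$ into $H^0(Y, V)$. Here one uses that $G_K = X_K$, since $C_K$ is empty. The convergence of $r\vartheta(i(y)\underline{Z})$ comes from regularity of the section on the translates $y\cdot U$.

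The third step, which I expect to be the main obstacle, is surjectivity. Given a $Y$-invariant $\vartheta \in V$, we must show it extends, after scaling by $r$, from $\Gt^\wedge$ to a section of $\Nt^\wedge$ over all of $\Pt^\wedge$. I would verify this on each affine chart $y\cdot U$ by using the valuative description (ii) of $\Pt$. A function on $\Gt$ is regular on the toric-type charts precisely when its coefficients satisfy inequalities of the form $v(\sigma_\chi\tau(y,\chi)) \ge 0$, and these are exactly the hypotheses on $r\vartheta(i(y)\underline{Z})$. The cleanest route is to use the explicit basis of Faltings--Chai \cite[Ch. III, Theorem 6.2]{FC}. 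For each class $\chi_0 \in X/\phi(Y)$, form the theta series
\[\vartheta_{\chi_0} = \sum_{y\in Y} \zeta(y)\tau(y,\chi_0)\underline{Z}^{\chi_0+\phi(y)}.\]
The positivity condition makes $\tau(y,\phi(y))\in I$, so the coefficients grow quadratically in $I$-adic valuation; this gives convergence on every $y\cdot U$. Because each $Y$-orbit of coefficients is determined by a single coefficient, $H^0(Y,V)$ is spanned by these series, with $\phi$ of finite index by nondegeneracy. Each $\vartheta_{\chi_0}$ is $Y$-invariant and lies in the image of $H^0$. This proves surjectivity, and canonicity follows since the map is just restriction to $\Gt^\wedge$.
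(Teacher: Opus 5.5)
\textbf{There is a genuine gap in the surjectivity step.} The paper does not prove \Cref{definite}; it imports it from \cite[Ch.~III, Theorem~6.2]{FC} and \cite[Corollary~4.5.4.24]{lan}. So your fallback to ``the explicit basis of Faltings--Chai'' at the end of Step~3 cites the statement itself. Without it, your argument does not show that the $\vartheta_{\chi_0}$ come from $H^0$. Your spanning argument for $H^0(Y,V)$ is fine.

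What is missing is a description of sections of $\Nt^\wedge$ over the formal charts $V^\wedge$ in terms of Fourier coefficients. Condition (ii) of \Cref{rcm} cannot supply this. It only says which valuations of $K(\Gt)$ have a center somewhere on $\Pt$, which gives no chart-by-chart information. Moreover, $r\vartheta$ is an infinite sum, not an element of $K(\Gt)$.

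``Extending from $\Gt^\wedge$'' is also the wrong picture. In the toric model of \S\ref{KSDD}, suppose $\widetilde\sigma$ does not contain $\BR_{\geq 0}(1,\underline 0)$. Then $X_{\widetilde\sigma}\cap\Gt=\Gt_{\laurent}$, whose $q^{1/w}$-adic completion is empty. So $\Gt^\wedge$ does not meet $X_{\widetilde\sigma}^\wedge$, and the section must be defined chart by chart and glued.

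The ingredient you need comes from the $\Gt$-action, conditions (3) and (6) of \Cref{rcm}. For $\Gt$-stable affine $V$, it gives an $X$-grading $\Gamma(V,\Nt)=\bigoplus_\chi\Gamma(V,\Nt)_\chi$. Each piece is an $R$-submodule of $K\underline Z^\chi$, compatibly with restrictions. Consequently $\Gamma(V^\wedge,\Nt^\wedge)$ consists of the families $a_\chi\underline Z^\chi\in\Gamma(V,\Nt)_\chi$ that tend to $0$ $I$-adically. In the toric model these pieces are explicit: $\chi^{m_\sigma}$ times the monomials in $\widetilde\sigma^\vee\cap(\BZ\oplus X)$. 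This is what \Cref{H0} uses.

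With that description, surjectivity becomes a coefficient check on the finitely many charts of a fundamental domain. You then glue, descend by $Y$-invariance, and algebrize by formal GAGA. ``Coefficients grow quadratically'' makes sense for $R=\pwr$. For general $R$ you need the Faltings--Chai/Lan lemma that $\zeta(y)\tau(y,\chi)\in I^n$ for all but finitely many $y$.

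\textbf{Step 2 silently uses the same ingredient.} A $Y$-invariant formal section $\tilde s$ relates $\tilde s|_{\Gt^\wedge}$ to $\tilde s|_{y\cdot\Gt^\wedge}$. These formal opens are disjoint for $y\neq0$. The cocycle $\zeta(y)\underline Z^{\phi(y)}$ lives on the generic fiber $\Gt_K$, not on $\Gt^\wedge$. To conclude $y\cdot\vartheta=\vartheta$, you need $\tilde s$ to have a single Fourier expansion valid on every chart, which is again the graded description above.

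Your injectivity claim is true, but density of $\Gt$ in $\Pt$ is not the reason; that density fails modulo every $I^k$. Instead use
\[H^0(X,\LL)\hookrightarrow H^0(G,\LL)\hookrightarrow H^0(\widehat G,\widehat\LL),\]
which holds by integrality of $G$ and Krull's intersection theorem at points of $G_0$.

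\textbf{Step 1 reduces nothing.} The statement for $D^{n+1}\otimes([-1]^*D)^n$, or for a power of $\Lt$, says nothing about $H^0(G_K,\LL_K)$ itself. Use instead a model $(\Pt,\Nt,\Lt)$ as in \Cref{ourdef}. There $\Lt$ is just a $Y$-equivariant line bundle and $\Nt$ serves only to algebrize; the GAGA-plus-invariants argument never needs $\Lt$ to be ample. In the toric Kuga--Sato case, the repaired argument is essentially the degree-$0$ part of \S\ref{nonequiv}, namely \Cref{ss1} together with \Cref{H0}.
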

When $G$ is a Kuga--Sato degeneration, as previously mentioned, this is the $i = 0$, $\LL$ ample case of \Cref{mainthm}. 

We have now finished explaining how the Faltings--Chai classification of degenerations of ample line bundles includes as part of its proof the complete determination of Fourier expansions of algebraic theta functions for ample line bundles. In the remainder of this article, we will use the Faltings--Chai uniformization of line bundles on Kuga--Sato degenerations (and thus implicitly that $i = 0$ ample special case of \Cref{mainthm}) to deduce \Cref{mainthm} as well as the accompanying results promised in \Cref{app}.

\subsection{Relatively complete models for invertible sheaves}\label{modified}
We now discuss the ability to use relatively complete models to calculate ``in one shot'' the degenerations uniformized by objects in $\DD_{\ISgg}^{\spli}$ rather than first decomposing as a difference of Mumford-ample objects and applying \Cref{FCdef}. The fact that such relatively complete models can be used is surely well-known, and is mentioned without explicit detail in \cite[p. 72]{FC}.
\begin{definition}
  A \emph{relatively complete model} for an object $(\widetilde{G}, i: Y \hookrightarrow \widetilde{G}_K, \widetilde{\mathscr{L}})$ of $\DD_{\ISgg}^{\spli}$ is a triple $(\widetilde{P}, \widetilde{\mathscr{N}}, \widetilde{\mathscr{L}})$, where $(\widetilde{P}, \widetilde{\mathscr{N}})$ is a relatively complete model for the object $(\widetilde{G}, i: Y \to \widetilde{G}_K)$ of $\DD_{g, g}^{\spli}$ in the sense of \Cref{rcm}, and $\widetilde{\mathscr{L}}$ is, somewhat abusively, a line bundle $\widetilde{\mathscr{L}}$ on $\widetilde{P}$ with a $Y$-equivariant structure and a $\widetilde{G}$-equivariant structure, such that $\widetilde{\mathscr{L}}|_{\widetilde{G}_K}$ agrees with the $\widetilde{\mathscr{L}}$ in the degeneration datum as a $Y$-equivariant structure on the structure sheaf.  
\end{definition}

\begin{definition}[Our definition of $M_{IS}$]\label{ourdef}
Let $D$ be an arbitrary object of $\DD_{\ISgg}^{\spli}$. If $D$ admits a relatively complete model $(\Pt, \widetilde{\mathscr{N}}, \widetilde{\LL})$, then $M_{IS}(D) = (G, \LL)$ may equivalently be defined by constructing the line bundle $\Lt_k := \Lt \times_R R/I^k$ on $\Pt_k$ for all $k$ (as is done for $\widetilde{\mathscr{N}}$ in step (2) of \Cref{mumford-construction}), then use the ample line bundle $(\mathscr{N}_k)_{k \geq 1}$ on $\mathfrak{P}$ to algebrize $(\Lt_k)_{k \geq 1}$ to a line bundle on $X$ (as in step (3) of \Cref{mumford-construction} except now the structure $\Lt_k$ is also being algebrized --- this time via \cite[Corollaire (5.1.6)]{EGAIII}), which can then be restricted to $G \subset X$ which is defined by step (4) of \Cref{mumford-construction}. That restriction is a line bundle $\LL$, and $M_{IS}(D)$ is defined to be $(G, \LL)$. 
\end{definition}
\begin{lemma}
  Let $D$ be an arbitrary object of $\DD_{\ISgg}^{\spli}$. Then:
  \begin{enumerate}
  \item[(1)] $D$ admits a relatively complete model.
\item[(2)]
    $M_{IS}(D)$ as defined in \Cref{ourdef} indeed agrees with \Cref{FCdef}.
    \end{enumerate}
\end{lemma}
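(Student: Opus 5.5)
The plan is to reduce both parts to the Mumford-ample case through the finiteness condition and then track tensor products.

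For (1), the finiteness condition \Cref{finiteness} and \cite[Lemma 4.4.14, Construction 4.5.4.6]{lan} let us write $D = D_1 \otimes D_2^{-1}$ with $D_1, D_2$ in $\DD_{\ISgg}^{\spli, \Mample}$. If I remember \cite[Lemma 4.5.4.5]{lan} correctly, one can take $D_2 = D_0^{n}$ for a single Mumford-ample $D_0$ and $n \gg 0$. I would then pick one relatively complete model $(\Pt, \Nt)$ for $(\Gt, i)$ that is simultaneously a Mumford-ample model for $D_1$ and $D_2$. The first choice is to let $\Nt$ be the equivariant bundle extending $D_1$ and to show that $D_2$ also extends to a $Y$- and $\Gt$-equivariant line bundle $\Lt_2$ on the same $\Pt$. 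Then $\Lt := \Lt_1 \otimes \Lt_2^{-1}$, with the tensor product of the equivariant structures, is a line bundle on $\Pt$ whose restriction to $\Gt_K$ is the $Y$-equivariant structure of $D$. This gives the required triple $(\Pt, \Nt, \Lt)$.

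This common-model step is the main obstacle. Relatively complete models produced by the $\Proj$ construction depend on the ample datum, so two Mumford-ample data need not extend over the same $\Pt$. I would first try \cite[Proposition 4.5.1.15]{lan} applied to $D_1 \otimes D_2$, which is again Mumford-ample for suitable choices, to get a model $\Pt$. I would then argue that the bundles extending $D_1$ and $D_2$ both exist on this $\Pt$, for instance by taking closures of the graded pieces of the $\Proj$ algebra on the normalization. If that fails, I would pass to a common refinement by an equivariant blow-up, which keeps conditions (i)--(iii), and pull back both bundles to it.

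For (2), independence of the model is immediate for Mumford-ample data by \Cref{FCdef} and \cite[Corollary 4.5.3.7]{lan}. Given the triple, the formation of $\Lt_k$ on $\Pt_k$, descent through the étale quotient $\Pt_k \to P_k$, algebrization via \cite[Corollaire (5.1.6)]{EGAIII}, and restriction to $G$ are each compatible with tensor products and inverses. Descent is compatible because equivariant structures multiply. Algebrization is compatible because it is an equivalence of categories of coherent sheaves that respects $\otimes$.

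Hence the bundle produced by \Cref{ourdef} is $\LL_1 \otimes \LL_2^{-1}$, where $\LL_j$ is the algebrization of $\Lt_j^\wedge/Y$. For $j=1$, $\Lt_1 = \Nt$, so this is $M_{IS}(D_1)$ by \Cref{FCdef}. For $j=2$, we need that algebrizing $\Lt_2^\wedge$ using $\Nt$ rather than its own ample bundle gives $M_{IS}(D_2)$. This holds because the proper scheme $X$ and the subscheme $G$ are intrinsic to $\mathfrak P$ by the uniqueness in Grothendieck existence \cite[Th\'eor\`eme (5.4.5)]{EGAIII}. The rigidifications match because they come from the canonical rigidification on $\Gt$ via $\widehat G \simeq \Gt^\wedge$.

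Finally, independence of the choice of triple follows by comparing two models through a common refinement, using the same functoriality.
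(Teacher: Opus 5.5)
You have the right architecture: first build one model on which \Cref{ourdef} visibly reproduces \Cref{FCdef}, then show independence of the model. But the step you flag as the main obstacle is never resolved, and neither proposed fix works as stated.

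Your first option does not deliver. A $\Proj$-type model for $D_1\otimes D_2$ only guarantees that the product extends amply. Closures of graded pieces give coherent sheaves with no reason to be invertible, let alone ample, and nothing you say forces $D_1$ and $D_2$ to be simultaneously ample on one model.

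Your fallback does put both bundles on one scheme, but after pulling $\Lt_1$ back along a nontrivial refinement it is no longer ample: it is trivial on the curves the refinement contracts. So your choice $\Nt=\Lt_1$ violates the definition of a relatively complete model. If you repair ampleness with some other bundle, you lose the identity $\Nt=\Lt_1$ that your part (2) relies on.

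The missing idea is the paper's. Take $\Pt$ to be the closure of the diagonally embedded $\Gt$ in $\Pt_1\times_R\Pt_2$. Set $\Lt:=(\Lt_1\boxtimes\Lt_2^{-1})|_{\Pt}$, and use as auxiliary ample bundle $\Nt:=(\Lt_1\boxtimes\Lt_2)|_{\Pt}$. This is ample as the restriction of an ample bundle on the product. The remaining axioms of \Cref{rcm} are checked as in the proof of \cite[Lemma 4.5.4.8]{lan}.

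Part (2) inherits the gap. Uniqueness in Grothendieck's existence theorem only shows that, for a \emph{fixed} $\Pt$ (so fixed $\mathfrak P$ and $X$), the output does not depend on which ample bundle you algebrize with. On the actual common model neither pulled-back $\Lt_j$ is ample. So the claim that algebrizing $\Lt_j^\wedge/Y$ from $\Pt$ recovers $M_{IS}(D_j)$, which is defined using $\Pt_j$, is already a change-of-model statement. That is exactly what (2) is supposed to prove.

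Your last sentence has the same issue: compatibility with $\otimes$ does not give independence along a common refinement $\Pt''$. You would need two further things:
\begin{itemize}
\item compatibility of \Cref{mumford-construction} with pullback along equivariant morphisms of models, with the induced map of algebrizations an isomorphism over $G$;
\item an argument handling the fact that the two pulled-back bundles on $\Pt''$ can genuinely differ, as line bundles, by a $Y$-invariant divisor supported away from $\bigcup_y y\cdot\Gt$.
\end{itemize}

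The paper sidesteps all of this. For any model, the algebrization $\mathscr N$ of $\Nt^\wedge$ is ample, so $\LL\otimes\mathscr N^{\otimes N}$ is ample for $N\gg 0$. The Fourier-expansion argument behind the quasi-inverseness of $F_{ample}$ and $M_{ample}$ then identifies its degeneration datum as $\Lt\otimes\Nt^{\otimes N}$, independently of the model. Cancelling $M_{IS}(\Nt)^{\otimes N}=\mathscr N^{\otimes N}$ gives agreement with \Cref{FCdef} directly.
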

\begin{proof}
  The argument of \cite[Construction 4.5.4.6]{lan} proves that there are Mumford-ample objects $(\Gt, Y \to \Gt_K, \Lt_1)$ and $(\Gt, Y \to \Gt_K, \Lt_2)$ in $\DD_{\ISgg}^{\spli}$ such that
  \[\Lt = \Lt_1 \otimes \Lt_2^{-1}.\]
  Let $(\Pt_1, \Lt_1)$ and $(\Pt_2, \Lt_2)$ be the relatively complete models for $\Lt_1$ and $\Lt_2$ that are guaranteed to exist by the definition of a Mumford-ample line bundle on a degeneration. If $\Pt_1$ and $\Pt_2$ were secretly the same, then we would immediately have a proof of (1): we could just form $\Lt = \Lt_1 \otimes \Lt_2^{-1}$ and combine it with $\widetilde{\mathscr{N}} = \Lt_1$ (or $\Lt_2$) to obtain our desired relatively complete model. More generally, we can apply the technique (already present in \cite{lan, FC}) of mimicking the construction of $\Lt_1 \otimes \Lt_2^{-1}$ by restriction of $\Lt_1 \boxtimes \Lt_2^{-1}$ along the diagonal: define $\Pt$ to be the Zariski closure of $\Gt$ along the diagonal embedding $\Gt \hookrightarrow \Pt_1 \times \Pt_2$. The restriction of $\Lt_1 \boxtimes \Lt_2^{-1}$ to $\Pt$ is denoted $\Lt$, as it provides a relatively complete model $(\Pt, \Lt, \widetilde{\mathscr{N}})$ for $\Lt_1 \otimes \Lt_2^{-1}$, where $\widetilde{\mathscr{N}}$ is the restriction of the ample line bundle $\Lt_1 \boxtimes \Lt_2$ to $\Pt$. We refer to the proof of \cite[Lemma 4.5.4.8]{lan} for an extremely similar argument. This proves (1). Moreover, $M_{IS}(D)$ defined using \emph{this particular choice} of relatively complete model in \Cref{ourdef} agrees with \Cref{FCdef}. 

  To prove (2), it therefore suffices to prove that \Cref{ourdef} doesn't depend on the choice of relatively complete model. Let $\LL$ be the algebrization of $\Lt^\wedge$ on $X$. Similarly, let $\mathscr{N}$ be the algebrization of $\Nt^\wedge$ on $X$. Since $\mathscr{N}$ is ample, by \cite[Corollaire (4.5.8)]{EGAII} there exists a positive integer $N$ such that 
  \[\LL \otimes \mathscr{N}^{\otimes N} \text{ is ample}.\]
  It follows from the same arguments using Fourier expansions that show $F_{ample}$ and $M_{ample}$ are quasiinverse (\Cref{raynaud}) that $F_{ample}(\LL \otimes \mathscr{N}^{\otimes N}) = \Lt \otimes \Nt^{\otimes N}$, i.e., our $M_{IS}(\Lt) \otimes M_{IS}(\Nt)^{\otimes N}$ agrees with $M_{IS}(\Lt) \otimes M_{IS}(\mathscr{N})^{\otimes N}$ defined by \Cref{FCdef}, hence $M_{IS}(\Lt)$ agrees with it defined Mumford's way as well. 
\end{proof}

\subsection{Line bundles on Kuga--Sato degenerations, from the point of view of Faltings--Chai}\label{KSDD}
We now use the characterization of $M_{IS}$ from \Cref{ourdef} to explicitly uniformize the Kuga--Sato degenerations. We will use this to prove the main theorems of this article in the subsequent sections.

Fix positive integers $g, w$, and let $R = \pwr$ (complete with respect to the ideal $I = (q^{1/w})$), so that the split $R$-torus $\Gt = \BG_{m, \pwr}^g$ is a valid uniformizing torus for a degeneration datum. The character lattice of $\Gt$ is $X = \BZ^g$, where $\chi = (\chi_1, \ldots, \chi_g) \in X$ represents the character given on points by $\underline{Z} \mapsto \underline{Z}^\chi$. We take the period lattice $Y = \BZ^g$ to embed into $\Gt_K$ via
\[i: Y \hookrightarrow \Gt_{\laurent}, \qquad \underline{y} = (y_1, \ldots, y_g) \mapsto (q^{y_1}, \ldots, q^{y_g}).\]
In other words, $i$ is defined by the pairing $\tau$, where 
\[\tau: Y \times X \to \laurent^\times \subset K^\times\]
is given by
\[\tau(y, \chi) := (i(y))^{\chi} =  q^{\chi_1 y_1 + \cdots + \chi_gy_g}.\]
\begin{lemma} The tuple $D := (\Gt, i: Y \to \Gt_{\laurent})$ defines an object of $\DD_{g, g}^{\spli}$.
\end{lemma}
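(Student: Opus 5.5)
We verify the three conditions of \Cref{DDdef} directly for $R = \pwr$, $I = (q^{1/w})$, $K = \Frac(R)$. Condition (1) is immediate: $\Gt = \BG_{m,\pwr}^g$ is a split torus over $R$ with character lattice $X = \BZ^g$, realized as the constant group scheme via $\chi \mapsto \underline{Z}^\chi$.

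For condition (2), the map $i$ is determined by the bimultiplicative pairing $\tau(y,\chi) = q^{\sum_j \chi_j y_j}$, which takes values in $\laurent^\times \subset K^\times$. The relation $i(y)^\chi = \tau(y,\chi)$ then specifies a homomorphism of constant group schemes $Y_K \to \Gt_K$, since a $K$-point of $\Gt_K$ is a homomorphism $X \to K^\times$. To see that it is an inclusion, suppose $i(y) = 1$. Then $q^{y_j} = 1$ for every $j$. Because $v_q(q^{y_j}) = y_j$, this forces $y = 0$, so $i$ is injective, and a monomorphism of constant group schemes into $\Gt_K$ is a closed immersion.

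For condition (3), nondegeneracy of $\tau$ means that $\tau(y,\cdot) \equiv 1$ forces $y = 0$, and symmetrically in $\chi$. Taking $v_q$ turns $\tau$ into the standard dot product $Y \times X \to \BZ$, which is nondegenerate. For positivity, take $\phi: Y \to X$ to be the identity of $\BZ^g$. Then $\tau(y,\phi(y')) = q^{y\cdot y'}$ is symmetric in $y, y'$. Moreover, $\tau(y,\phi(y)) = q^{\sum_j y_j^2}$, and for $y \neq 0$ the exponent is at least $1$. Hence this element is divisible by $q = (q^{1/w})^w$, so it lies in $I$.

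The argument involves no real obstacle, since each condition reduces to a valuation computation. The only point needing care is the sense in which ``inclusion'' and ``nondegenerate'' are meant. We read nondegeneracy as injectivity of the induced maps $Y \to \Hom(X, K^\times)$ and $X \to \Hom(Y, K^\times)$, which is exactly what the valuation argument gives. If \cite{FC} intends a stronger reading, namely that the valuation pairing is nondegenerate after tensoring with $\BQ$, the same computation covers it, because the valuation pairing is the identity matrix.
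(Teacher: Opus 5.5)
Your proposal is correct and follows the paper's proof. The paper checks only the positivity condition, using $\phi$ the identity $Y \to X$ and the computation $\tau(y,\phi(y)) = q^{y_1^2+\cdots+y_g^2} \in (q^{1/w})$ for $y \neq 0$, and your checks of injectivity, nondegeneracy and symmetry via $v_q$ are the routine verifications it leaves implicit.

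One remark should be deleted: $i\colon Y_K \to \Gt_K$ is \emph{not} a closed immersion. A closed immersion into the affine scheme $\Gt_K$ would force $Y_K=\coprod_{y\in Y}\Spec K$ to be affine, and it is not. So ``inclusion'' in \Cref{DDdef} can only mean an injective homomorphism, which is exactly what your valuation argument proves.
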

\begin{proof}
  We need only verify the positivity requirement in \Cref{DDdef}. So let $\phi: Y \to X$ be the ``principal polarization'' map sending $(y_1, \ldots, y_g)$ to the character $\underline{Z} \mapsto Z_1^{y_1} \cdots Z_g^{y_g}$, and note that
  \[\tau(y, \phi(y)) = q^{\langle y, \phi(y) \rangle} = q^{y_1^2 + \cdots + y_g^2} \in (q^{1/w}) \text{ unless } y = 0.\]
\end{proof}

In the remainder of this section, we will accomplish two tasks:

\begin{enumerate}
\item We will explicitly classify all the ways to extend $D$ to an object of $\DD_{\ISgg}^{\spli}$.
\item For all of those ways, we will use toric geometry to produce explicit relatively complete models in the sense of \Cref{ourdef}. We will compare the resulting objects in $\DEG_{\ISgg}^{\spli}$ to those defined more directly in \Cref{intro2} and see that they are in fact the same.  
\end{enumerate}

To extend $D$ to an object of $\DD_{\ISgg}^{\spli}$ is to produce a rigidified line bundle $\Lt$ on $\Gt$ and a $Y$-equivariant structure on its generic fiber. We already used (from \cite[Remarque 7.2.4]{MB}) the fact that as a line bundle, $\Lt$ is isomorphic to the structure sheaf of $\Gt$. So the $Y$-equivariant structure that we desire to construct is the same as a class in\footnote{The following is standard, but we specify it here to make it unambiguous what the construction is: to specify a $Y$-equivariant structure on $\mathcal{O}$ is to specify isomorphisms $\iota(y)^*\mathcal{O} \overset{\sigma(y)}{\to} \mathcal{O}$, which have to be compatible in the sense that for $y_1, y_2 \in Y$, the isomorphism $\iota(y_1 + y_2)^*\mathcal{O} \overset{\sigma(y_1 + y_2)}{\to} \mathcal{O}$ coincides with the composition $\iota(y_1 + y_2)^*\mathcal{O} = \iota(y_1)^*\iota(y_2)^*\mathcal{O} \overset{\iota(y_1)^*(\sigma(y_2))}{\to} \iota(y_1)^*\mathcal{O} \overset{\sigma(y_1)}{\to} \mathcal{O}$, i.e. the $\sigma$ indeed has to satisfy the cocycle condition $\sigma(y_1 + y_2) =  \sigma(y_1) \cdot \iota(y_1)^*(\sigma(y_2))$. All of the isomorphisms $\sigma(y)$ are to be viewed as elements of $H^0(\widetilde{G}_K, \mathcal{O}^\times) = K[X]^\times = K^\times X$. Two $Y$-equivariant structures $\sigma, \sigma'$ are equivalent if there is an $f \in K^\times X$ with the property that $\sigma_1(y)\sigma_2(y)^{-1} = \frac{\iota(y)^*f}{f}$ for all $y \in Y$, meaning that it is exactly $H^1(Y, K^\times X)$ that classifies the isomorphism classes of $Y$-equivariant structures on $\mathcal{O}$.} $H^1(Y, \mathcal{O}(\Gt_K)^\times)$. We already saw following \Cref{DDIS} that such a cocycle is always of the form $y \mapsto \alpha(y) = \zeta(y) \cdot \underline{Z}^{\phi(y)}$, where
\begin{equation}\label{nthcocycle}\frac{\zeta(y_1 + y_2)}{\zeta(y_1)\zeta(y_2)} = \tau(y_1, \phi(y_2)).\end{equation}
and $\zeta: Y \to K^\times$, while $\phi: Y \to X$ is a group homomorphism. For an arbitrary fixed choice of $\phi$ (though only $\phi$ with finite cokernel will produce line bundles that induce isogenies $G_{\laurent} \to G_{\laurent}^\vee$), we determine all possible $\zeta$. First of all, if $\phi$ is given in the standard bases of $X$ and $Y$ by a matrix $B = (b_{ij})$, then $B$ must be symmetric, since the left hand side of \Cref{nthcocycle} is symmetric in $y_1$ and $y_2$. Otherwise, no $\zeta$ can possibly exist. This is the degeneration datum manifestation of the symmetry property of the maps induced by line bundles on abelian varieties.

On the other hand, once $B$ is symmetric, there is a full $\Hom(Y, K^\times)$-torsor of possibilities for $\zeta$ satisfying \Cref{nthcocycle}. Namely,
\[\tau(y_1, \phi(y_2)) = q^{\langle y_1, B(y_2)\rangle} = q^{\sum_{i, j} b_{ij}(y_1)_i(y_2)_j},\]
so we may take
\[\zeta(\uy) = q^{\frac{1}{2}\sum_i b_{ii}(y_i^2 - y_i) + \sum_{i < j} b_{ij} y_iy_j} = q^{Q'(\underline{y})}\] 
which can be modified by any element of  $\Hom(Y, K^\times)$, that is, any collection $a_1(q), \ldots, a_g(q) \in K^\times$ to produce
\[\zeta(y) = q^{Q'(\uy)} \cdot \prod_{i = 1}^g a_i(q)^{y_i}.\]

The only remaining condition that must be required is the \emph{finiteness condition}: there has to exist a positive integer $n$, a $\phi: Y \to X$ satisfying the positivity condition with respect to $\tau$ such that
\[\zeta(y) \cdot \tau(y, \phi(y))^n \in \pwr \text{ for all } y \in Y.\]
This is satisfied if and only if all the $a_i \in \laurent^\times \subset K^\times$.

Therefore, we have shown that the following definition accounts for \emph{all} of the objects of $\DD_{\ISgg}^{\spli}$ extending $D$:
\begin{definition}\label{DQa}
  For a fixed symmetric $\BZ$-entried matrix $B$, and a tuple $\underline{a} = (a_1, \ldots, a_g)$ of elements of $\laurent^\times$, let $D_{B, a}$ denote the object of $\DD_{\ISgg}^{\spli}$ given by the cocycle
  \[\uy \mapsto q^{Q'(\uy)} \underline{Z}^{B(\uy)} \prod_{i = 1}^g a_i(q)^{y_i}.\] 
\end{definition}

Now we begin the task of constructing a  relatively complete model for $D_{B, \underline{a}}$, using the theory of torus embeddings for the split $\pwr$-torus $\Gt$. We refer to \Cref{toric-pdf} for the notations involving the basic theory of torus embeddings over $\pwr$.

Let $\widetilde{P} = X_\Sigma \times_{\BZ[q^{1/w}]} \pwr$, where $\Sigma$ is the fan in $(\BZ \oplus Y) \otimes \BR$ given by the cone over a specific convex polyhedral decomposition of $\{1\} \times (Y \otimes \BR)$ (compare with \cite[\S 6]{mumford_compositio}). The symbol $X_\Sigma$ is notation for the toric scheme over $\BZ$ coming from the fan $\Sigma$. The coordinate $q^{1/w}$ is the $\BG_m$ character corresponding to the first coordinate in $\BZ \oplus X$, and the coordinates $Z_1, \ldots, Z_g$ are used for the usual basis of $X$.  Here we obtain a $\BZ[q^{1/w}]$-scheme $X_\Sigma$ in the first place by noting that by the definition of $\Sigma$ as a cone filling up an entire half-space, $X_\Sigma$ admits a natural map to $\mathbf{A}^1_{\BZ}$ (the toric $\BZ$-scheme from the fan given by a ray in $\BZ \otimes \BR$) with coordinate $q^{1/w}$. Importantly, $\Pt$ is only locally of finite type over $\pwr$, because $\Sigma$ is infinite. This departs from the typical situation in most toric geometry references like \cite{demazure, fulton} so it requires some care to handle rigorously --- the necessary pieces of the basic theory are adapted in \Cref{toric-pdf}. 

In an attempt to avoid confusion, we use $t_1, \ldots, t_g$ as the coordinates of $Y \otimes \BR \simeq \BR^g$, and we use $t_0$ as the extra coordinate on $(\BZ \oplus Y) \otimes \BR$. We only use $y_1, \ldots, y_g$ to denote the coordinates of an element of $Y \otimes \BR$ if it is already in the lattice $Y \subset Y \otimes \BR$. 

We now specify the polyhedral decomposition we use to construct $\Sigma$. It is taken to be the standard triangulation of the standard tiling by unit hypercubes of $Y \otimes \BR \simeq \BR^g$. That is, we triangulate the hypercube $[0, 1]^g$ by covering it with $g!$ $g$-simplices indexed by the permutation group $S_g$,
\begin{equation}\label{cube}\sigma_\pi = \{(t_1, \ldots, t_g) \in [0, 1]^g : 0 \leq t_{\pi(1)} \leq \cdots \leq t_{\pi(g)} \leq 1\}.\end{equation}
Then we triangulate $Y \otimes \BR \simeq \BR^g$ by the infinite collection of $g$-simplices
\[\sigma_{y, \pi} = y + \sigma_\pi, \qquad y \in Y.\]
The cone $\Sigma$ in $(\BZ \oplus Y) \otimes \BR$ over this triangulation is defined by the collection of cones
\[\widetilde{\sigma}_{y, \pi} = \BR_{\geq 0} \cdot (1 \times \sigma_{y, \pi}).\]

This notation being somewhat tiresome to parse, we provide some pictures of our choice of $\Sigma$ in the cases $g = 1$ and $g = 2$: \Cref{fig1} and \Cref{fig2}.

The action of $Y$ on $\Pt$ is defined to be induced by base change to $\pwr$ from the action of $Y$ on $(\BZ \oplus Y) \otimes \BR$ that is linear on cones and sends $(1, \underline{t})$ to $(1, \underline{t} + w\underline{y}')$. 
\begin{figure}
  \caption{The fan $\Sigma$ we use for $g = 1$}\label{fig1}
  \centering
\begin{tikzpicture}[
  x=1cm,y=-1cm,
  font=\small,
  line cap=round,
  line join=round,
  >={Stealth[length=1.7mm]},
  ray/.style={gray!75,line width=.55pt,->}
]
  \def\N{8}   
  \def\R{4.5} 

  \foreach \n in {-\N,...,\N} {
    \pgfmathsetmacro{\s}{\R/sqrt(1+\n*\n)}
    \coordinate (r\n) at (\s,{\s*\n});
  }

  \fill[blue!12]   (0,0)--(r0)--(r1)--cycle;
  \fill[orange!18] (0,0)--(r1)--(r2)--cycle;

  \foreach \n in {-\N,...,\N}
    \draw[ray] (0,0)--(r\n);

  \draw[gray,densely dashed] (1,-3.8)--(1,3.8); 

  \foreach \n in {-3,...,3}
    \fill (1,\n) circle (1.35pt);

  \node[fill=white,inner sep=1pt]
    at (1,-4.13) {$\vdots$};
  \node[fill=white,inner sep=1pt]
    at (1,4.13) {$\vdots$};

  \fill (0,0) circle (1.6pt);
  \node[left=3pt] at (0,0) {$(0, 0)$};
\end{tikzpicture}
\end{figure}

\begin{figure}
  \caption{The fan $\Sigma$ we use when $g = 2$.}\label{fig2}
  \centering
\begin{tikzpicture}[
  x={(4.90cm,0cm)},
  y={(-.28cm,-1.20cm)},
  z={(-.43cm,1.00cm)},
  font=\small, line cap=round, line join=round,
  >={Stealth[length=1.5mm]},
  mesh/.style={black!45,line width=.4pt},
  ray/.style={black!23,line width=.4pt},
  continuation/.style={black!40,densely dotted,line width=.4pt}
]
  \def\N{2}
  \def\H{1.09}
  \pgfmathtruncatemacro{\Nm}{\N-1}
  \pgfmathsetmacro{\E}{\N+.35}
  \coordinate (O) at (0,0,0);
  \coordinate (A) at (1,0,0);
  \coordinate (B) at (1,1,0);
  \coordinate (C) at (1,1,1);
  \coordinate (D) at (1,0,1);

  \fill[blue,opacity=.07] (O)--(A)--(B)--cycle;
  \fill[blue,opacity=.10] (O)--(B)--(C)--cycle;
  \fill[orange,opacity=.09] (O)--(A)--(D)--cycle;
  \fill[orange,opacity=.13] (O)--(D)--(C)--cycle;
  \foreach \i in {-\N,...,\N}
    \foreach \j in {-\N,...,\N}
      \draw[ray] (O)--(1,\i,\j);
  \foreach \P in {A,B,C,D}
    \draw[black!65,line width=.65pt] (O)--(\P);

  \filldraw[fill=blue!5,draw=black!50,line width=.65pt]
    (1,-\N,-\N)--(1,\N,-\N)--
    (1,\N,\N)--(1,-\N,\N)--cycle;
  \fill[blue!22] (A)--(B)--(C)--cycle;
  \fill[orange!28] (A)--(D)--(C)--cycle;

  \begin{scope}
    \clip (1,-\N,-\N)--(1,\N,-\N)--
          (1,\N,\N)--(1,-\N,\N)--cycle;
    \foreach \i in {-\N,...,\N}
      \foreach \j in {-\N,...,\N}
        \draw[ray,densely dashed] (O)--(1,\i,\j);
    \foreach \P in {A,B,C,D}
      \draw[black!45,densely dashed,line width=.55pt] (O)--(\P);
  \end{scope}

  \foreach \k in {-\N,...,\N} {
    \draw[mesh] (1,-\N,\k)--(1,\N,\k);
    \draw[mesh] (1,\k,-\N)--(1,\k,\N);
    \draw[continuation] (1,-\E,\k)--(1,-\N,\k);
    \draw[continuation] (1,\N,\k)--(1,\E,\k);
    \draw[continuation] (1,\k,-\E)--(1,\k,-\N);
    \draw[continuation] (1,\k,\N)--(1,\k,\E);
  }
  \foreach \i in {-\N,...,\Nm}
    \foreach \j in {-\N,...,\Nm}
      \draw[mesh] (1,\i,\j)--(1,{\i+1},{\j+1});

  \draw[black!80,line width=.8pt] (A)--(B)--(C)--(D)--cycle;
  \draw[black!80,line width=.8pt] (A)--(C);
  \foreach \i in {-\N,...,\N}
    \foreach \j in {-\N,...,\N}
      \fill[black!60] (1,\i,\j) circle (1pt);

  \foreach \i/\j in {-2/-2,0/-2,2/-2,-2/0,2/0,-2/2,0/2,2/2}
    \draw[ray,->] (1,\i,\j)--(\H,{\H*\i},{\H*\j});

  \draw[black!60] (1,2,-1.5)--++(0,.8,.25)
    node[below,align=left,text=black]
      {$\{1\}\times Y_{\mathbb R}$};
  \fill (A) circle (1.7pt);
  \node[below right=3pt,fill=blue!5,inner sep=1.5pt]
    at (A) {$(1,0,0)$};
  \fill (O) circle (1.8pt);
  \node[left=5pt] at (O) {(0, 0, 0)};
\end{tikzpicture}
\end{figure} 
 
The basic reason for selecting this triangulation is the following
\begin{lemma}
The fan $\Sigma$ is nonsingular.
\end{lemma}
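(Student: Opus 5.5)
The proof reduces nonsingularity to a determinant computation. Recall the standard criterion: a fan is nonsingular if and only if every cone is generated by part of a $\BZ$-basis of the ambient lattice, here $\BZ \oplus Y \simeq \BZ^{g+1}$. Every cone of $\Sigma$ is a face of some maximal cone $\widetilde{\sigma}_{y, \pi}$, and a subset of a basis is again part of a basis. So it suffices to treat the maximal cones $\widetilde{\sigma}_{y, \pi}$.

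Each $\widetilde{\sigma}_{y,\pi}$ is the cone over the simplex $1 \times \sigma_{y,\pi}$, so it is generated by the $g+1$ vectors $(1, v)$, where $v$ runs over the vertices of $\sigma_{y,\pi}$. These vertices are lattice points. For $\sigma_\pi$, the vertices are
\[v_k = e_{\pi(g-k+1)} + \cdots + e_{\pi(g)}, \qquad k = 0, \ldots, g,\]
so $v_0 = 0$, $v_1 = e_{\pi(g)}$, and so on up to $v_g = (1, \ldots, 1)$. One checks directly from the defining inequalities $0 \le t_{\pi(1)} \le \cdots \le t_{\pi(g)} \le 1$ that these are the extreme points.

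The linear automorphism $(t_0, \underline{t}) \mapsto (t_0, \underline{t} + t_0 y)$ of $\BZ \oplus Y$ preserves the lattice. It carries $\widetilde{\sigma}_{0,\pi}$ onto $\widetilde{\sigma}_{y,\pi}$. Hence it suffices to take $y = 0$.

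For $y = 0$, the generators are $(1, v_0), \ldots, (1, v_g)$. Subtract consecutive vectors: the differences $(1,v_k) - (1,v_{k-1}) = (0, e_{\pi(g-k+1)})$ for $k = 1, \ldots, g$, together with $(1, v_0) = (1, 0)$, form a unimodular change of generators. These new vectors are the standard basis of $\BZ^{g+1}$ up to permutation. Therefore the $(1,v_k)$ form a $\BZ$-basis, and the cone is nonsingular.

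Two points need some care. First, one should confirm that the cones $\widetilde{\sigma}_{y,\pi}$ and their faces really form a fan. This holds because the simplices $\sigma_{y,\pi}$ form a genuine triangulation of $\BR^g$ (the standard Freudenthal/Kuhn triangulation): the pieces intersect in common faces, since each face is cut out by equalities among $t_i$ and integer translates. This is implicitly part of the definition of $\Sigma$ in the paper. Second, $\Sigma$ is infinite. However, nonsingularity is a cone-by-cone condition, so the infinitude causes no difficulty.

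The only mildly nontrivial step is identifying the vertices of $\sigma_\pi$. After that, the unimodularity computation is immediate.
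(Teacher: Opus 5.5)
Your proposal is correct and takes essentially the same route as the paper. The paper likewise identifies the vertices $(1,v_i)$ of the simplex (with $0/1$ coordinates) and checks that the resulting $(g+1)\times(g+1)$ matrix has determinant $\pm 1$, which is exactly what your consecutive-difference argument establishes; your reduction from $\sigma_{y,\pi}$ to $\sigma_{0,\pi}$ via the unimodular shear $(t_0,\underline{t})\mapsto(t_0,\underline{t}+t_0 y)$, and your remark that faces of a unimodular simplicial cone are again unimodular, make explicit steps that the paper leaves implicit.
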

\begin{proof}
  We need to show that for all $y \in Y = \BZ^g$ and $\pi \in S_g$, the $g + 1$ vertices of $1 \times \widetilde{\sigma}_{y, \pi}$ are a basis for $\BZ \oplus Y$. Those vertices are precisely the points in $\BR^g$ given by $(1, v_i)$, where $i = 0, \ldots, g$, and $v_i$ has coordinates $v_{\pi(1)}= \cdots =  v_{\pi(i)} = 0 $, $v_{\pi(i+1)} = \cdots = v_{\pi(g)} = 1$. Using the basis $e_{\pi(1)}, e_{\pi(2)}, \ldots, e_{\pi(g)}$ for $Y \otimes \BR$, this comes down to the fact that 
  \[\det \left(\begin{matrix}1 & 1 & 1 & \cdots & 1 & 1 & 1\\
    1 & 0 & 0 & \cdots & 0 & 0 & 0\\
    1 & 1 & 0 & \cdots & 0 & 0 & 0\\
    1 & 1 & 1 & \cdots & 0 & 0 & 0\\
    \vdots & \vdots & \vdots & \ddots & \vdots & \vdots & \vdots \\
    1 & 1 & 1 & \cdots & 1 & 0 & 0\\
    1 & 1 & 1 & \cdots & 1 & 1 & 0\\
  \end{matrix}\right) = \pm \det \left(\begin{matrix}
    1 & 0 & 0 & \cdots & 0 & 0 & 0\\
    1 & 1 & 0 & \cdots & 0 & 0 & 0\\
    1 & 1 & 1 & \cdots & 0 & 0 & 0\\
    \vdots & \vdots & \vdots & \ddots & \vdots & \vdots & \vdots \\
    1 & 1 & 1 & \cdots & 1 & 0 & 0\\
    1 & 1 & 1 & \cdots & 1 & 1 & 0\\
    1 & 1 & 1 & \cdots & 1 & 1 & 1\\
  \end{matrix}\right) = \pm 1.\]
\end{proof}
One embeds $\widetilde{G} = \Spec \pwr[X] = \BG_{m, \pwr}^g$ into $\widetilde{P}$ as the affine from the cone \[\BR_{\geq 0} \cdot (1, 0, 0, \ldots, 0, 0).\] The fact that the maximal cones in $\Sigma$ are smooth means that we can define a piecewise linear function $\psi \in \mathbf{PL}(\Sigma)$ (and therefore a line bundle on $\widetilde{P}$) by just specifying its values on the vertices  $(1, y), y \in Y$.

\begin{remark}
It will be convenient for some of what we do to remark that even after refining $\Sigma$ to a finer cone $\Sigma'$, we can still define a line bundle on $X_{\Sigma'}$ by a piecewise linear function $\psi \in \mathbf{PL}(\Sigma)$. Thus we can take advantage of the nonsingular nature of $\Sigma$ to construct $\psi$ even if we ultimately do not work with the exact fan $\Sigma$. 
\end{remark}

We first construct the required ample line bundle $\widetilde{\mathscr{N}}$ on $\widetilde{P}$. Since it will be constructed from a function $\psi \in \PL(\Sigma)$, the only things that actually require checking are:

\begin{itemize}
  \item We have to arrange for $\widetilde{\mathscr{N}}$ to be ample. 
  \item We have to produce a $Y$-equivariant structure on $\widetilde{\mathscr{N}}$.
\end{itemize}
  For the first point, we will use the ampleness criterion \Cref{ampleness} (one has to slightly modify the usual ampleness criterion for line bundles on toric varieties because there is no finite-type hypothesis available, as $\Sigma$ is infinite). 
Let $\mathsf{Q}: Y \to \BZ$ be the (quadratic) function of the form
\[\mathsf{Q}(y_1, \ldots, y_g) = \sum_{i = 1}^g a_{ii}y_i^2 + 2\sum_{1 \leq i < j \leq g} a_{ij} y_iy_j,\]
where $a_{ij} \in \BZ$ and $a_{ij} = a_{ji}$. 
By smoothness of $\Sigma$, there is a unique $\psi \in \mathbf{PL}(\Sigma)$  with the property that $\psi(1, y) = -\mathsf{Q}(y)$ for all $y \in Y$.
\begin{lemma}\label{ample-exists}
The line bundle $\mathscr{N}$ coming from $\psi$ in the toric language is ample, if $\mathsf{Q}$ is chosen appropriately. 
\end{lemma}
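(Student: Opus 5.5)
The plan is to invoke the ampleness criterion \Cref{ampleness} from \Cref{toric-pdf}. I expect it to be the infinite-fan analogue of the usual toric criterion: the line bundle attached to $\psi \in \PL(\Sigma)$ is ample when $\psi$ is strictly convex with respect to $\Sigma$. Concretely, for each maximal cone $\widetilde{\sigma}_{y,\pi}$ we take the linear function $\ell_{y,\pi}$ on $(\BZ \oplus Y)\otimes\BR$ agreeing with $\psi$ on that cone. We then need $\psi \le \ell_{y,\pi}$ on all of $|\Sigma|$, with equality exactly on $\widetilde{\sigma}_{y,\pi}$. The sign convention is fixed by whichever of $\psi$ or $-\psi$ the paper's conventions make concave; I choose the sign so that $-\psi(1,\cdot)$ interpolates $\mathsf{Q}$. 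Since $\psi$ is homogeneous of degree one on cones, it suffices to check this on the slice $\{1\}\times (Y\otimes\BR)$. So the claim reduces to a statement about the piecewise-affine function $f$ on $\BR^g$ that interpolates $\mathsf{Q}$ linearly on each simplex $\sigma_{y,\pi}$: we need $f$ to be strictly convex with respect to the triangulation.

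The natural choice is $\mathsf{Q}(y) = \sum_i y_i^2$, i.e.\ $a_{ij}=\delta_{ij}$. Failing that, take $\mathsf{Q}$ to be a suitable positive-definite form adapted to the triangulation, such as $\sum_i y_i^2 + \sum_{i<j}(y_i - y_j)^2$, or a large multiple of $\sum_i y_i^2$ plus a correction.

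With $\mathsf{Q} = \sum_i y_i^2$ the interpolant separates coordinatewise. On $\sigma_\pi + y$ it equals $\sum_i \mathsf{q}(t_i)$, where $\mathsf{q}$ is the piecewise-linear interpolation of $t\mapsto t^2$ at the integers. This holds because the simplex vertices are $0/1$ vectors shifted by $y$, and the one-variable interpolant of $t^2$ on $[n,n+1]$ is affine, hence agrees with the simplexwise-linear interpolant. This $f$ is convex, but its linearity domains are the whole unit cubes, not the individual simplices, so it is not strictly convex. I would therefore add the term $\sum_{i<j}\mathsf{q}(t_i - t_j)$, interpolated the same way. The hyperplanes $t_i - t_j \in \BZ$, together with $t_i \in \BZ$, cut out exactly the simplices $\sigma_{y,\pi}$; this is the standard fact that the type-$A$ alcove triangulation refines the cube triangulation to the Freudenthal one. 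The point to verify is that each of these functions is affine on every simplex. This holds because each simplex lies in a region where $t_i - t_j$ lies between consecutive integers, so its integer part is constant there and the function is an affine interpolant. The sum of these convex functions then has linearity domains exactly the $\sigma_{y,\pi}$, since any two distinct simplices are separated by one of the bending hyperplanes, across which the corresponding summand strictly bends. The resulting $\mathsf{Q}(y) = \sum_i y_i^2 + \sum_{i<j}(y_i-y_j)^2$ has the required shape with $a_{ij}\in\BZ$ symmetric: $a_{ii} = g$ and $2a_{ij} = -2$.

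The main obstacle is making sure \Cref{ampleness} is stated in a form that applies. It must handle the non-finite-type setting, presumably requiring the strict convexity inequality uniformly over the infinitely many cones, which the periodicity of $f$ should provide. It must also cover the base change from $\BZ[q^{1/w}]$ to $\pwr$. A secondary check is that ampleness here is in the sense of the footnote to \Cref{rcm}: the nonvanishing loci of sections of powers of the bundle, which correspond to characters $m$ with $\ell_{y,\pi} \le m$, should give exactly the affine charts $U_{\widetilde{\sigma}_{y,\pi}}$. Strict convexity is what yields this.
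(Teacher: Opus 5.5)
Your choice $\mathsf{Q}(y)=\sum_i y_i^2+\sum_{i<j}(y_i-y_j)^2$ and your strict-convexity argument are the paper's own. The paper uses $\sum_i c_{ii}\varphi_0(t_i)+\sum_{i<j}c_{ij}\varphi_0(t_i-t_j)$ with arbitrary integers $c_{ij}>0$, and you take all $c_{ij}=1$. This includes the remark that diagonal forms are linear on whole cubes, and the fact that $\sigma_{y,\pi}$ is cut out by the conditions $t_i\in[k_i,k_i+1]$ and $t_i-t_j\in[k_{ij},k_{ij}+1]$.

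The gap is hypothesis (2) of \Cref{ampleness}. You do not verify it, and ``periodicity'' does not supply it. That hypothesis is not a uniform lower bound on the convexity gaps $\delta(\sigma,\rho):=(\widetilde{\psi|_\sigma})(v_\rho)+a_\rho$; once strict convexity holds these are positive integers, hence automatically $\geq 1$. It asks instead that $-\langle m,v_\rho\rangle/\delta(\sigma,\rho)$ be bounded, with $m$ running over generators of $\sigma^\vee\cap M$. This is what produces a single $N$ with $\chi^m(\chi^{m_\sigma})^N\in H^0(X_\Sigma,\mathscr{N}^{\otimes N})$, a condition that must hold at all of the infinitely many rays $\rho$ at once. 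The $Y$-equivariance of $\psi$ leaves $\delta$ and $\langle m,v_\rho\rangle$ invariant under simultaneous translation of $\sigma$ and $\rho$. So it reduces you to finitely many $\sigma$, but for a fixed $\sigma$ the numerator is unbounded as $\rho$ varies, and equivariance says nothing about that.

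What closes the gap is a growth comparison. For $v_\rho=(1,y)$ you have $\delta(\sigma,\rho)=\mathsf{Q}(y)-\ell_\sigma(y)$ with $\ell_\sigma$ affine. This is $\geq c|y|^2-C$ because your $\mathsf{Q}$ is positive definite, while $-\langle m,v_\rho\rangle=O(1+|y|)$. Combined with $\delta\geq 1$ for the finitely many remaining rays, the ratio is bounded, uniformly in $\sigma$ after the equivariance reduction. This is the paper's one-line observation that the denominator grows quadratically while the numerator grows only linearly. It is exactly the non-quasi-compact input that condition (2) of \Cref{ampleness} exists to capture; strict convexity alone would only settle the finite-fan case.
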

\begin{proof}
  We apply the ampleness criterion of \Cref{ampleness}.
  As in the finite-type toric geometry situation, the main thing is to prove that $\psi$ is strictly convex. First, it is clear that not any positive-definite $Q$ will work. For example, if $Q(y) = \sum_i y_i^2$ (or any other diagonal form), then $\psi$ is linear on all of $\cup_\pi \widetilde{\sigma}_{y, \pi}$ for all $y \in Y$, since if $\epsilon = \sum_{i \in S} e_i$, then 
  \[\psi: (1, y + \epsilon) \mapsto -\mathsf{Q}(y) - \sum_{i \in S} 2y_i - |S| \]
  while $(1, y + \epsilon) = \sum_{i \in S} (1, y + e_i) - (|S| - 1)(1, y)$ and
  \begin{align*}\sum_{i \in S} -\psi(1, y + e_i) + (|S| - 1)\psi(1, y) &= \sum_{i \in S} -\mathsf{Q}(y + e_i) + (|S| - 1)\mathsf{Q}(y)\\ &= (|S| - 1)\mathsf{Q}(y) - \sum_{i \in S} (\mathsf{Q}(y) + 2y_i + 1)\\ &= - \mathsf{Q}(y) - \sum_{i \in S} 2y_i - |S|\end{align*}
    as well.

    The idea is that non-diagonal terms have to be added, while preserving the positive-definiteness of the form.

    For full rigor, let us begin with the $g = 1$ case, where this amounts to constructing  a Mumford-ample bundle on the Tate curve. In that situation, it is well-known that a diagonal form actually works (and it has to for this approach to work --- there aren't any other choices of rank-1 quadratic form). To be more precise, we use the continuous piecewise-affine function $\varphi_0$ on $Y \otimes \BR \simeq \BR$ given by \[\varphi_0|_{[n, n+1]}(t_1) = n^2 + n - (2n + 1)t_1.\]
    We also define the global affine-linear functions \[L_n(t_1) = n^2 + n - (2n + 1)t_1\] which extend  $\psi_0|_{[n, n+1]}(-)$ globally.
    These are just the secant lines at integer points to the concave continuous function $t_1 \mapsto -t_1^2$, so 
    \[\varphi_0|_{[n, n+1]}(t_1) = \min_{m \in \BZ} L_m(t_1).\]
    In fact, the minimum is attained by $L_m$ if and only if $y \in [m, m+1]$; and only by $L_m$ if $y \in (m, m+1)$. This makes $\varphi_0$ strictly convex even after extending to a piecewise-linear function $\psi_0$ on the cone over $\{1\} \times \BZ$.
    \begin{center}
\begin{tikzpicture}[
  x=1.8cm,y=.85cm,
  font=\small,
  line cap=round,line join=round,
  >={Stealth[length=1.7mm]},
  affine/.style={black!30,densely dashed,line width=.45pt},
  parabola/.style={black!70,densely dotted,line width=.9pt},
  envelope/.style={blue!70!black,line width=1.1pt},
  chosen/.style={orange!85!black}
]
  \def\xmin{-2.35}
  \def\xmax{2.35}
  \def\ymin{-5.8}
  \def\ymax{1.85}

  \begin{scope}
    \clip (\xmin,\ymin) rectangle (\xmax,\ymax);

    \foreach \n in {-3,...,2}
      \draw[affine]
        (\xmin,{\n*\n+\n-(2*\n+1)*\xmin}) --
        (\xmax,{\n*\n+\n-(2*\n+1)*\xmax});

    \draw[chosen,densely dashed,line width=.8pt]
      (\xmin,{2-3*\xmin})--(\xmax,{2-3*\xmax});

    \draw[parabola]
      plot[domain=\xmin:\xmax,samples=120] (\x,{-\x*\x});

    \draw[envelope]
      (-3,-9)--(-2,-4)--(-1,-1)--(0,0)--
      (1,-1)--(2,-4)--(3,-9);

    \draw[chosen,line width=1.35pt] (1,-1)--(2,-4);
  \end{scope}

  \draw[black!55,->]
    (\xmin,0)--(2.55,0) node[right,text=black] {$y$};
  \draw[black!55,->] (0,\ymin)--(0,2.05);

  \foreach \n in {-2,-1,1,2} {
    \draw[black!60] (\n,-.065)--(\n,.065);
    \node[above=3pt,font=\scriptsize] at (\n,0) {$\n$};
  }
  \node[above left=3pt,font=\scriptsize] at (0,0) {$0$};

  \foreach \v in {-1,-4} {
    \draw[black!60] (-.035,\v)--(.035,\v);
    \node[left=3pt,font=\scriptsize] at (0,\v) {$\v$};
  }

  \foreach \n in {-2,...,2}
    \fill[blue!70!black] (\n,{-\n*\n}) circle (1.55pt);
  \foreach \n in {1,2}
    \fill[orange!85!black] (\n,{-\n*\n}) circle (1.75pt);

  \node[above right=2pt,chosen,fill=white,inner sep=1.5pt]
    at (.30,1.10) {$L_1(y)=2-3y$};

  \draw[envelope] (-1.95,-6.3)--(-1.55,-6.3)
    node[right,text=black] {$\varphi_0$};
  \draw[parabola] (-.55,-6.3)--(-.15,-6.3)
    node[right,text=black] {$-y^2$};
  \draw[affine] (1,-6.3)--(1.4,-6.3)
    node[right,text=black] {$L_n$};
\end{tikzpicture}
    \end{center}
    The condition \Cref{ampleness}(2) is obviously satisfied, because the numerator is linear while the denominator grows quadratically as the cone $\sigma$ varies in either direction. 

    In the higher rank $g$ case, we consider the piecewise-affine continuous function on $Y \otimes \BR$ given by
    \[\varphi(\underline{t}) := \sum c_{ii} \varphi_0(t_i) + \sum_{i < j} c_{ij}\varphi_0(t_i - t_j)\]
    for some choice of $c_{ij} \in \BZ_{>0}$. 
When $y \in Y = \BZ^g$, all the inputs to $\varphi_0$ are integers, so $\varphi$ is integer-valued on $Y$, in particular agreeing with the quadratic form $-\sum c_{ii}y_i^2 - \sum_{i < j} c_{ij}(y_i - y_j)^2$ on those values. In other words, $\psi$ extends linearly on cones to a well-defined element of $\mathbf{PL}(\Sigma)$ after extending linearly from $\sigma_{y, \pi}$ to $\widetilde{\sigma}_{y, \pi}$ for all $y \in Y$ and $\pi \in S_g$.
    We mostly just need to verify that the extension of $\psi$ is strictly convex as an element of $\mathbf{PL}(\Sigma)$; that is, $\varphi|_{\sigma_{y, \pi}}$ is not equal on $\sigma_{y, \pi}$ to the affine extensions of $\varphi|_{\sigma_{y', \pi'}}$ for all $(y', \pi') \neq (y, \pi)$, and in fact dominates it on the interior of $\sigma_{y, \pi}$. This is true as long as the coefficients $c_{ij}$ are all positive, as we now prove. The cone $\sigma_{y, \pi}$ is defined by various equalities $t_i \in [k_i, k_i + 1]$ and $t_i - t_j \in [k_{ij}, k_{ij} + 1]$, where the integers $k_i, k_{ij}$ depend on $y$, $\pi$, $i$, and $j$; so on $\sigma_{y, \pi}$, we have
    \[\varphi(\underline{t}) = \sum_{i} c_{ii} L_{k_i}(t_i) + \sum_{ij} c_{ij} L_{k_{ij}}(t_i - t_j).\]
    A point $\underline{t}' \in Y \otimes \BR$ not in $\sigma_{y, \pi}$ satisfies $t_i' \in [k_i', k_i' + 1]$ and $t_i' - t_j' \in [k_{ij}', k_{ij}' + 1]$, where at least one of the $k_i'$ or $k_{ij}'$ is not equal to the corresponding $k_i$ or $k_{ij}$. Since the $c_{ij} > 0$, it follows from this and the fact that $\min_{m \in \BZ} L_m(y)$ is attained by $L_m$ if and only if $y \in [m, m+1]$ that
    \[\sum_{i} c_{ii} L_{k_i}(t_i') + \sum_{ij} c_{ij} L_{k_{ij}}(t_i' - t_j') > \sum_{i} c_{ii} L_{k_i'}(t_i') + \sum_{ij} c_{ij} L_{k_{ij}'}(t_i' - t_j'),\]
    which is exactly the desired strict convexity property for the extension of $\varphi$ to $\mathbf{PL}(\Sigma)$. Finally, the condition (2) in \Cref{ampleness} is obviously true, because the denominator is a quadratic function of $\rho$ in the obvious sense and the numerator grows only linearly. So \Cref{ampleness} implies that $\Nt$ is ample on $\Pt$.\end{proof}

The line bundle $\Nt$ defined by $\psi$ after base change by $\pwr$ (that is, pullback to $\Pt$)  admits a $Y$-equivariant structure. Indeed, for $y, x \in Y$, 
    \begin{align*}(x^*\psi)(1, y) &= \psi(1, y + wx)\\
      &= -\mathsf{Q}(y + wx)\\
      &= -\mathsf{Q}(y) - w^2\mathsf{Q}(x) - w \langle y, x\rangle_{\mathsf{Q}} \\
      &= -\mathsf{Q}(y) - (1, y) \cdot (w^2\mathsf{Q}(x), wx)\\
&= \psi(1, y) - (1, y) \cdot (w^2\mathsf{Q}(x), wx);
    \end{align*}
    so for all $x \in Y$, there is an isomorphism
    \[x^*\Nt \simeq \Nt, \qquad \text{multiplication of sections by } q^{w\mathsf{Q}(x)}\underline{Z}^{w\phi(x)}\]
    which indeed defines the desired $Y$-equivariant structure. 
    From now on it only matters that we have this auxiliary $Y$-equivariant ample line bundle on $\Pt$ so that we may use it to produce relatively complete models in the sense of \Cref{ourdef} --- the exact choice of $\Nt$ will not matter. 

    We now produce the remaining datum $\Lt$ in a relatively complete model for $D_{B, \ua}$. The invertible sheaf $\widetilde{\mathscr{L}}$ on $\widetilde{P}$ is constructed using the piecewise-linear function $\psi \in \mathbf{PL}(\Sigma)$ which satisfies 
    \[\psi(1, wy) = -w\left(\sum_{i} b_{ii} \frac{y_i^2 - y_i}{2} +  \sum_{i < j} b_{ij} y_iy_j\right)\]
    and is linear on the cone over the decomposition as defined in \Cref{cube} except with $[0, 1]^g$ replaced with $[0, w]^g$ and the cubulation $Y + [0, 1]^g$ replaced with the coarser cubulation $wY + [0, w]^g \subset Y \otimes \BR$. The multiplication by $w$ was necessary to ensure that $\psi$ is given on the simplices $\widetilde{\sigma}_{y, \pi}$ by elements of $\BZ \otimes X$. Indeed, we have $\psi(1, wy) \in w\BZ$ for all $y \in \BZ$, so the various $\psi(1, y)$ are defined by affine-linear interpolation of these and can be deduced to be in $\BZ$.  

    The $Y$-action on $\widetilde{P}$ is induced by the action of $Y$ on $Y \otimes \BR$ given by $\underline{y} \cdot \underline{t} = \underline{t} + w\underline{y}$. After taking $\laurent$-fiber, this becomes the multiplication action of $\iota(y)$ on $\Gt_{\laurent} = \mathbf{G}_{m, \laurent}^g$. Since it comes from a $\psi \in \mathbf{PL}(\Sigma)$, the line bundle $\Lt$ also has a $\widetilde{G}$-equivariant structure, which is just the multiplication structure on $\mathcal{O}_{\widetilde{G}}$ when restricted to $X_{\BR_{\geq 0} \cdot (1, \underline{0})} = \widetilde{G}$, since $\psi(1, 0) = 0$ so the Cartier divisor defining $\mathscr{L}$ does not intersect $X_{(1, 0)}$.
    
    The line bundle $\widetilde{\mathscr{L}}$ enjoys the property of being $Y$-equivariant, because $y^*\widetilde{\mathscr{L}}$ is defined by $y^*\psi \in \mathbf{PL}(\Sigma)$, which satisfies
    \begin{align*}\psi(1, wx) - (y^*\psi)(1, wx)  &= \psi(1, wx) - \psi(1, w(x + y))\\ 
      &=  -wQ'(x) + wQ'(x + y)\\
      &= wQ'(y) + \langle wx, y \rangle_B.
    \end{align*}
    A $Y$-equivariant structure on $\Lt$ can therefore be given by the isomorphisms
    \[y^*\Lt \to \Lt\]
    which are multiplication by \[(q^{1/w})^{wQ'(y)}\underline{Z}^{\phi(y)} = q^{Q'(y)}\Z^{B(y)}.\] In fact, to actually get a relatively complete model for $D_{B, \ua}$ we can take a slightly more general $Y$-equivariant structure on $\Lt$, namely multiplication by
    \begin{equation}\label{equivstr}q^{Q'(y)}\Z^{B(y)} \prod_{i = 1}^g a_i(q)^{y_i}.\end{equation}
    \begin{lemma}\label{model-works}
     The tuple $(\Pt, \Lt, \widetilde{\mathscr{N}})$ is a relatively complete model for $(\Gt, i, \Lt) = D_{B, a}$. 
    \end{lemma}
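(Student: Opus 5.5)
We check the definition of a relatively complete model for an object of $\DD_{\ISgg}^{\spli}$ item by item. By the definition preceding \Cref{ourdef}, this splits into two parts: first that $(\Pt, \Nt)$ is a relatively complete model for $(\Gt, i)$ in the sense of \Cref{rcm}, and second that $\Lt$ carries compatible $Y$- and $\Gt$-equivariant structures whose restriction to $\Gt_K$ is the cocycle of \Cref{DQa}. The second part is nearly immediate. The $\Gt$-equivariant structure comes from $\psi \in \PL(\Sigma)$, and the $Y$-equivariant structure is \eqref{equivstr}. The cocycle condition $\sigma(y_1+y_2) = \sigma(y_1)\,\iota(y_1)^*\sigma(y_2)$ reduces to the identity $Q'(y_1+y_2) = Q'(y_1)+Q'(y_2)+\langle y_1,y_2\rangle_B$ together with $\iota(y_1)^*\underline{Z}^{B(y_2)} = q^{\langle y_1, B y_2\rangle}\underline{Z}^{B(y_2)}$. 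The factor $\prod a_i^{y_i}$ is multiplicative, and since $\psi(1,0)=0$ the restriction to $\Gt_K$ is literally the cocycle $q^{Q'(y)}\underline{Z}^{B(y)}\prod a_i(q)^{y_i}$.

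For the first part, items (1)--(6) of \Cref{rcm} are mostly toric bookkeeping using \Cref{toric-pdf}. $\Pt$ is integral and locally of finite type, it contains $\Gt$ as the dense open orbit attached to the ray through $(1,\underline 0)$, and the $\Gt$-action extends by toric functoriality. The $Y$-action comes from the cone-preserving lattice automorphism $(t_0,\underline t)\mapsto(t_0,\underline t + w t_0 \underline y)$. On the $K$-fiber this is translation by $\iota(y)$, which can be checked on characters: the automorphism dualizes to $\underline{Z}^\chi \mapsto q^{\langle y,\chi\rangle}\underline{Z}^\chi$. The ampleness of $\Nt$ is \Cref{ample-exists}, and its $Y$- and $\Gt$-equivariant structures were constructed above. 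For condition (i), we take $U$ to be the union of the finitely many torus-invariant affine opens attached to cones over simplices contained in $[0,w]^g$. Since $Y$ acts by translation by $wY$ and $wY + [0,w]^g$ covers $Y\otimes\BR$, we get $\Pt = \bigcup_y y\cdot U$. For condition (iii), the map $\phi$ attached to the cocycle of $\Nt$, namely $q^{w\mathsf Q(x)}\underline{Z}^{w\phi(x)}$ as computed above, gives $\tau(y,\phi(y))$ as a positive power of $q$ times $q^{(\text{positive-definite form})(y)}$. This lies in $I$ for $y\neq 0$, because the form $\sum c_{ii}y_i^2+\sum c_{ij}(y_i-y_j)^2$ with $c_{ij}>0$ is positive-definite.

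The main obstacle is condition (ii), the valuative criterion \eqref{vcrit}. Let $v$ be a valuation on the function field of $\Gt$ with $v(R)\ge 0$. Restricted to the monomials $q^{n/w}\underline{Z}^\chi$, $v$ gives a homomorphism $\BZ\oplus X\to\Gamma_v$, the value group, which is possibly of higher rank. Toric theory as in \Cref{toric-pdf} says that $v$ has a center on $X_\Sigma$ if and only if this homomorphism lies in the support $|\Sigma|$, in the sense that it is nonnegative on the dual of some cone $\widetilde\sigma_{y,\pi}$. Since $v(q)\ge 0$, there are two cases.

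If $v(q)>0$, we use a rational-approximation argument. The vector $\underline t := (v(Z_i)/v(q^{1/w}))_i$ lives in $\Gamma_v\otimes\BQ$ rather than $\BR$ when the rank is larger than one. We translate by $Y$ to bring the relevant coordinates into a bounded box. Because $[0,w]^g$ is tiled by the simplices $\sigma_\pi$, which are defined by orderings of coordinates, membership can be tested via the total order on $\Gamma_v$, and this works in arbitrary rank. The criterion ``for all $\chi$ there is $y$ with $v(q^{\langle y,\chi\rangle}\underline Z^\chi)\ge0$'' is then exactly the statement that no coordinate $v(Z_i)$ is infinitely large or small relative to $v(q)$, that is, that $\underline t$ is bounded.

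If $v(q)=0$, the cone in question must meet $t_0=0$, and the only such cone in $\Sigma$ is $\{0\}$. So $v$ has a center if and only if $v(\underline Z^\chi)=0$ for all $\chi$. This matches \eqref{vcrit}, since $\tau(y,\chi)$ then has valuation $0$.

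I expect the careful handling of higher-rank valuations in the first case to be the delicate step. If it becomes unwieldy, I would instead invoke Mumford's criterion \cite[\S 6]{mumford_compositio}, which states that the toric scheme of a $Y$-periodic fan whose cones cover the half-space is relatively complete.
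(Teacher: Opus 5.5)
Your proposal is correct. Apart from condition (ii), it follows the paper's proof: compatibility of $\Lt$ with $D_{B,\ua}$ via \eqref{equivstr}, the finite-type open $U$ in (i), the toric description of the $\Gt$- and $Y$-actions, and positivity from the positive-definiteness of $\mathsf{Q}$ all match.

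\textbf{The valuative criterion (ii).} The paper and you take different routes here.

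\emph{The paper's route.} The inequalities coming from $\chi=\pm e_i$, together with $v(q)\ge 0$, cut out a region $\Omega$ covered by finitely many cones of $\Sigma$. The paper then transfers this covering statement from $\BQ$ to the divisible ordered group $\Gamma_\BQ$ by elimination of quantifiers. This treats $v(q)=0$ and $v(q)>0$ at once and uses only local finiteness and rationality of the fan. That is why the paper can later reuse the same proof for the refined fans $\Sigma_\gamma$.

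\emph{Your route.} You locate the chart directly from the order structure. This is more elementary and explicit, but it depends on the specific shape of this triangulation. For $\Sigma_\gamma$ it would need an extra step, for instance intersecting the charts located for $\Sigma$ and $\gamma^{-1}\Sigma$.

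\emph{Tightening your step.} As written it is loose. The ratios $v(Z_i)/v(q^{1/w})$ do not exist in $\Gamma_v\otimes\BQ$, since you cannot divide there, and neither approximation nor a $Y$-translation is needed. Instead:
\begin{enumerate}
\item Put $u=v(q^{1/w})>0$.
\item The criterion at $\chi=\pm e_i$ makes $\{n\in\BZ : nu\le v(Z_i)\}$ nonempty and bounded above, so it has a maximum $k_i$.
\item Set $r_i=v(Z_i)-k_iu\in[0,u)$ and choose $\pi$ with $r_{\pi(1)}\le\cdots\le r_{\pi(g)}$.
\item Then $0\le r_{\pi(1)}\le\cdots\le r_{\pi(g)}\le u$ are exactly the facet inequalities of $\widetilde\sigma_{k,\pi}$. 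This is the cone over the unit simplex $k+\sigma_\pi$, not over $\sigma_\pi$ inside $[0,w]^g$.
\item Since that cone is smooth, its primitive facet normals generate $\widetilde\sigma_{k,\pi}^\vee\cap(\BZ\oplus X)$. Hence $v\ge 0$ on the coordinate ring of that chart.
\end{enumerate}
Conversely, membership in such a cone bounds each $v(Z_i)$ between integer multiples of $v(q)$, which gives the criterion. Your separate treatment of $v(q)=0$ is fine. Written this way the step is not delicate, and the fallback to Mumford is unnecessary.
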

    \begin{proof}
      Once we prove that $\Pt$ with the open embedding of $\Gt$ that we already defined satisfies the conditions (i)-(iii) of \Cref{rcm}, we are done; as the fact that $\Lt$ is compatible with the $\Lt$ of the degeneration datum is immediate from \Cref{equivstr}, and the requisite properties of $\Nt$ have already been proved above.
  We deal with the properties of a relatively complete model one at a time:

  \begin{enumerate} 
  \item[(i)] \ul{There exists an open $\widetilde{G}$-invariant subset $U \subset \widetilde{P}$ of finite type over $\BZ\llbracket q^{1/w}\rrbracket$ such that $\widetilde{P} = \bigcup_{y \in Y} y \cdot U$.}
    Simply take
\[U = \bigcup_{\substack{\pi \in S_g \\ y \in Y/wY}} X_{\widetilde{\sigma}_{y, \pi}} \times_{\BZ[q^{1/w}]} \pwr.\]
    This has the desired property because the action of $y' \in Y$ maps $X_{\widetilde{\sigma}_{y, \pi}}$ isomorphically to $X_{\widetilde{\sigma}_{y + wy', \pi}}$, and $X_\Sigma$ is covered by the affines $X_{\widetilde{\sigma}_{y, \pi}}$ which are finite type over $\BZ[q^{1/w}]$.  

  \item[(ii)] \ul{For all valuations $v$ (possibly not of rank 1) on the field of rational functions of $\widetilde{G}$ for which $v \geq 0$ on $\pwr$, $v$ has a center on $\widetilde{P}$ if and only if}
    \[\underline{\text{for all } \chi \in X \text{ there exists } y \in Y \text{ such that } v(i(\uy)^{\chi}\underline{Z}^\chi) \geq 0}.\]
    
    This form of the valuative criterion for properness essentially amounts to the fact that the infinite fan $\Sigma$ is complete over the fan for $\Spec \BZ[q^{1/w}]$. However, we cannot argue as in the classical setting (\cite[\S 2.4]{fulton}) just by looking at the vector for the valuations in $N \otimes \BR$ and noting that they have centers if and only if their corresponding vectors in $N \otimes \BR$ are in the support of the fan; this is because of the requirement that we also consider valuations of rank $\geq 1$.
    
    Note also that we are requiring an old-fashioned version of the properness condition from \cite{FC, mumford_compositio}. A weaker version is proven in \cite{lan} to be sufficient, but the point of this proof is that (as expected from \cite{mumford_compositio}) the strongest form is true for the relatively complete models constructed by toric geometry. 
    
    Let $v: K(\widetilde{G}) \to \Gamma$ be a valuation, where $\Gamma$ is a totally ordered abelian group. We prove each direction separately. Suppose the condition
    \begin{idea}\label{star}
for all $\chi \in X$, there exists $y  \in Y$ such that $v(i(y)^\chi\underline{Z}^\chi) \geq 0$
    \end{idea}
    holds. The element of $\Gamma$ given by $v(i(y)^\chi\underline{Z}^\chi)$ is, in our case, the same as
\[(\chi_1y_1 + \cdots \chi_gy_g)v(q) + \sum_{i = 1}^g\chi_iv(Z_i).\]
    If we could assume that $v$ was of rank $1$, then we could consider $v$ as a vector in $N \otimes \BR$ and easily conclude from \Cref{star} that $v \in |\Sigma|$  and therefore has a center on $\widetilde{P}$. The argument we end up doing essentially goes like this, but we have to be extra careful. First, since $\Gamma$ is a totally ordered abelian group, it is torsion-free; so $\Gamma \subset \Gamma \otimes \BQ$, which also admits the structure of a totally ordered $\BQ$-vector space. The valuation $v$ therefore induces an element of $\Hom((\BZ \oplus X)_\BQ, \Gamma_\BQ)$, which generalizes $(\BZ \oplus Y)_\BQ$. The object generalizing the cone
  \[\sigma_{\pi, y} = \{m \in  (\BZ \oplus Y)_\BQ : \langle m, n \rangle \geq 0 \text{ for all } n \in \sigma_{\pi, y}^\vee \cap (\BZ \oplus X)\} = (\sigma_{\pi, y}^\vee)^\vee \subset (\BZ \oplus Y)_\BQ\]
  (which in the rank-one valuation case would contain precisely the rank-1 valuations with a center on $X_{\sigma_{\pi, y}}$) is the cone
  \[\sigma_{\pi, y}' = \{m \in \Hom((\BZ \oplus X)_\BQ, \Gamma_\BQ) : \langle m, n \rangle \geq 0 \text{ for all } n \in \sigma_{\pi, y}^\vee \cap (\BZ \oplus X)\} \subset \Hom((\BZ + X)_\BQ, \Gamma_\BQ).\]
  If $v$ with $v \geq 0$ on $\pwr$ has associated vector (\emph{a priori} an element of $\Hom((\BZ \oplus X)_\BQ, \Gamma_\BQ)$) in $\sigma_{\pi, y}'$, then it has a center on $X_{\sigma_{\pi, y}} \times_{\BZ[q^{1/w}]} \pwr$ (as it is $\geq 0$ on the coordinate ring of $X_{\sigma_{\pi, y}} \times_{\BZ[q^{1/w}]} \pwr$, which is generated over $\pwr$ by the generators of $\sigma_{\pi, y}^\vee \cap (\BZ \oplus X)$). The condition \Cref{star} applied to the generators $Z_i$ for $X$ and their negations $Z_i^{-1}$  just says that $v \in \Hom((\BZ \oplus X)_\BQ, \Gamma_\BQ)$ satisfies some set of inequalities:
  \begin{idea} 
    For all $i = 1, \ldots, g$, there exist $y_i^+, y_i^- \in \BZ$ such that $y_i^+v(q) + v(Z_i) \geq 0$ and $y_i^-v(q) + v(Z_i) \leq 0$. 
  \end{idea}
  which together with the omnipresent condition $v(q) \geq 0$ define a set $\Omega' \subset \Hom((\BZ \oplus X)_\BQ, \Gamma_\BQ)$. 

  The set $\Omega'$ generalizes the set $\Omega \subset \Hom((\BZ \oplus X)_\BQ, \BQ)$ given by the same inequalities --- an example of such a set (which depends on the values of $y_i^{\pm}$) in the case $g = 1$ is pictured here, superimposed on \Cref{fig1} with horizontal axis standing for $v(q)$ and vertical axis standing for $v(Z)$.
  \begin{center}\begin{tikzpicture}[
  x=1cm,y=-1cm,
  font=\small,
  line cap=round,
  line join=round,
  >={Stealth[length=1.7mm]},
  ray/.style={gray!75,line width=.55pt,->}
]
  \def\N{8}
  \def\R{4.5}

  \foreach \n in {-\N,...,\N} {
    \pgfmathsetmacro{\s}{\R/sqrt(1+\n*\n)}
    \coordinate (r\n) at (\s,{\s*\n});
  }

  \foreach \n in {-\N,...,\N}
    \draw[ray] (0,0)--(r\n);

  \def\sp{1}
  \def\sm{-2}
  \def\Xvis{4.55}
  \def\Yvis{4.55}
  \def\L{10}

  \begin{scope}
    \clip (0,-\Yvis) rectangle (\Xvis,\Yvis);
    \fill[green!50!black,opacity=.18]
      (0,0)--(\L,{-\sp*\L})--(\L,{-\sm*\L})--cycle;
  \end{scope}

  \begin{scope}
    \clip (0,-\Yvis) rectangle (\Xvis,\Yvis);
    \draw[green!50!black,line width=.9pt]
      (0,0)--(\L,{-\sp*\L});
    \draw[green!50!black,line width=.9pt]
      (0,0)--(\L,{-\sm*\L});
  \end{scope}

  \node[green!40!black,fill=white,inner sep=1pt]
    at (2.0,.55) {$\Omega$};

  \draw[gray,densely dashed] (1,-3.8)--(1,3.8);

  \foreach \n in {-3,...,3}
    \fill (1,\n) circle (1.35pt);

  \node[fill=white,inner sep=1pt]
    at (1,-4.13) {$\vdots$};
  \node[fill=white,inner sep=1pt]
    at (1,4.13) {$\vdots$};

  \fill (0,0) circle (1.6pt);
  \node[left=3pt] at (0,0) {$(0,0)$};
    \begin{scope}[
    raylabel/.style={
      text=green!40!black,
      fill=white,
      inner sep=1.5pt
    },
    leader/.style={green!40!black,line width=.5pt}
  ]
    \coordinate (upperRayPoint)
      at ({.75*\Xvis},{-.75*\sp*\Xvis});
    \coordinate (lowerRayPoint)
      at ({-.88*\Yvis/\sm},{.88*\Yvis});

    \node[raylabel,anchor=south] (upperRayLabel)
      at (2.7,-4.4) {$v(Z)+y^-v(q)\leq 0$};

    \node[raylabel,anchor=north] (lowerRayLabel)
      at (.95,4.85) {$v(Z)+y^+v(q)\geq 0$};

    \draw[leader] (upperRayLabel.south)--(upperRayPoint);
    \draw[leader] (lowerRayLabel.north)--(lowerRayPoint);

    \fill[green!40!black] (upperRayPoint) circle (1.2pt);
    \fill[green!40!black] (lowerRayPoint) circle (1.2pt);
  \end{scope}
\end{tikzpicture}
  \end{center}

  The property of $\Omega$ that is important in this context is that despite the infinitude of $\Sigma$, the cone $\Omega$ is covered by \emph{finitely many} cones $\sigma$ of $\Sigma$, which we call $\sigma_n$ for $n$ in a finite index set $T$. In other words, the finite system of linear inequalities defining $\Omega - \cup_{n \in T} \sigma_n$ has no solution in $N \otimes \BQ$. But those linear inequalities are exactly the same as the ones defining $\Omega' - \cup_{n \in T} \sigma_n'$ (recall the definition of $\sigma'$), so by the basic technique of elimination of quantifiers (see for example \cite[\S 3.1]{marker}) there is also no solution to that in $\Hom((\BZ \oplus X)_\BQ, \Gamma_\BQ)$, i.e. $\Omega'$ is covered by the $\sigma_n'$. This means that the valuation vector of $v$ belongs to some $\sigma_n'$, i.e. $v$ has a center on $\widetilde{P}$.

  Now let us address the converse. Suppose $v$ with $v \geq 0$ on $\pwr$ has a center on $\Pt$. Let $\sigma \in \Sigma$ be such that the center is on $X_{\sigma} \times_{\BZ[q^{1/w}]} \pwr$. Let $\chi \in X$. The coordinate ring of $X_{\sigma} \times_{\BZ[q^{1/w}]} \pwr$ contains $q^k\underline{Z}^\chi$, for all sufficiently large $k$. This is obvious from the general toric geometry of the situation: you just have to make $k$ large enough to make the relevant inequality true at the finitely many vertices of the polyhedral decomposition that $\Sigma$ is the cone over. Therefore,
  \[k \gg_{\sigma, \chi} 0 \implies q^k\underline{Z}^\chi \in \mathcal{O}(X_{\sigma}) \implies v(q^k\underline{Z}^\chi) \geq 0.\]

  In particular, by taking $\underline{y}$ such that $y_1\chi_1 + \cdots + y_g\chi_g$ is large enough, we know that $v(q^{y_1\chi_1 + \cdots + y_g\chi_g}\underline{Z}^\chi) \geq 0$, hence \Cref{star} is satisfied. 

\item[(iii)]\ul{The action of $\widetilde{G}$ and $Y$ on $\widetilde{P}$ extends the translation action of $\widetilde{G}$ and $Y$ on $\widetilde{G}$ given by the group law.}

  The part about the action of $\widetilde{G}$ is part of the data that comes with the toric scheme $X_\Sigma$. The part about the action of $Y$ follows from noting (by definition of the action of $Y$ in terms of the defining fan $\Sigma$) that after inverting $q$, all the $X_{\sigma_n}$ coincide and are equal to $\BG^g_{m, \laurent}$, where $\underline{y}$ acts by multiplication by $i(y) = q^{\underline{y}}$.
  (the actual (iii) in \Cref{rcm} is obvious from the positive-definiteness of the $\mathsf{Q}$ that we used). 
  \end{enumerate}
\end{proof}

\subsection{This degeneration datum uniformizes the line bundle $\mathscr{L}^{Q, \underline{a}}$}
In \Cref{intro2}, the rigidified line bundle $\mathscr{L}^{Q, \underline{a}}$ on $E_{w, \Tate}^g$ (the $g$-th fibered power of the Tate curve over $\laurent$) was defined in a fairly concrete way. In \Cref{KSDD}, we defined an object $M_{IS}(\Gt, i: Y \to \Gt_K, \Lt^{Q, \underline{a}})$ that depends on $Q, \underline{a}$. 
\begin{lemma}\label{unif} In fact, 
  $M_{IS}(\Gt, i: Y \hookrightarrow \Gt_K, \Lt^{Q, \underline{a}})$ and $\LL^{Q, \underline{a}}$ are isomorphic objects of $\DEG_{\ISgg}^{\spli}$ extending $G_w^g$. Consequently, the line bundles $\LL^{Q, \ua}$ account for all of the rigidified line bundles on Kuga--Sato degenerations. 
\end{lemma}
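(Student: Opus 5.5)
Both objects are rigidified line bundles on the same degeneration $G_w^g$, so I would compare them through the quasi-inverse functors $F_{IS}$ and $M_{IS}$ of \Cref{raynaud}. The first step is to check that the underlying degeneration $M(\Gt, i)$ is $G_w^g$. For $g=1$ this is the Deligne--Rapoport construction of the Tate curve via \Cref{qt}, with the relatively complete model $\Pt$ of \Cref{KSDD}. For general $g$, the relatively complete model $\Pt_\Sigma$ maps $Y$-equivariantly, after a refinement of the fan $\Sigma$ (harmless by the independence of $M$ from the choice of model), to the product of $g$ copies of the rank-one model. Functoriality of $M$ together with compatibility of Mumford's construction with fibered products then identifies $M(\Gt,i)$ with $G_w^g$. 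Alternatively, one can invoke full faithfulness of $M$ and the product structure of the datum, since $\tau$ is the product pairing.

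Since $F_{IS}$ is additive, hence a group homomorphism on rigidified line bundles, and since $\LL^{Q,\ua}$ is built by tensor operations from $\LL$, pullbacks $\pi_i^*$, $m_{ij}^*$, and $\Pic^0$-twists, it suffices to compute $F_{IS}$ on the generators.
\begin{enumerate}
\item[(a)] For $\LL = \mathcal{O}([-1])$ on $E_{w,\Tate}$, the section $\sum_n q^{(n^2-n)/2}Z^n$ is the Fourier expansion of the canonical section. By \Cref{definite} this forces $\phi = 1$ and $\zeta(y) = q^{(y^2-y)/2}$, i.e.\ $F(\LL) = D_{x^2,1}$. Here one must check, by rigidification, that the zero at $Z=-1$ matches $[-1]$ from \cite{DR}.
\item[(b)] Pullback along a homomorphism $f\colon G_w^g\to G_w^{g'}$ induced by an integer matrix corresponds on degeneration data to pulling back the cocycle $\zeta(y)\underline{Z}^{\phi(y)}$ along the induced maps on $Y$ and $X$. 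This is the functoriality of $F_{IS}$ applied to $\pi_i$ and $m_{ij}$.
\item[(c)] The twist by $\pi_i^*\mathcal{O}([-a_i^{-1}]-[-1])$ multiplies $\zeta$ by $y\mapsto a_i^{y_i}$. This is the standard statement that $\Pic^0$, i.e.\ $\Hom(Y,K^\times)$ modulo $\tau(\cdot,X)$, is identified with the Tate-curve points of \cite[VII 1.2(d)]{DR}. It can be checked by applying (a) to translates.
\end{enumerate}

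Combining (a)--(c) with the formula \Cref{twistQ}, the diagonal terms contribute $b_{ii}(y_i^2-y_i)/2$. The cross terms $m_{ij}^*\LL\otimes\pi_i^*\LL^{-1}\otimes\pi_j^*\LL^{-1}$ contribute
\[
\tfrac{(y_i+y_j)^2-(y_i+y_j)}{2}-\tfrac{y_i^2-y_i}{2}-\tfrac{y_j^2-y_j}{2}=y_iy_j,
\]
so the total is $q^{Q'(y)}\underline{Z}^{B(y)}\prod a_i^{y_i}$, which is exactly $D_{B,\ua}$. Applying $M_{IS}$ then gives $M_{IS}(D_{B,\ua})\cong \LL^{Q,\ua}$.

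For exhaustiveness, any rigidified $\mathscr{M}$ on $G_w^g$ has $F_{IS}(\mathscr{M})$ extending $(\Gt,i)$, hence equal to some $D_{B,\ua}$ by the classification preceding \Cref{DQa}. Therefore $\mathscr{M}\cong M_{IS}F_{IS}(\mathscr{M})\cong\LL^{Q,\ua}$, using also \cite[Th\'eor\`eme 3.2.3]{MB} to pass between $G$ and its generic fiber.

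The main obstacle is step (a) together with the identification of $M(\Gt,i)$ with $G_w^g$, including its group structure: one has to match the explicit Deligne--Rapoport objects with Mumford's output. Step (c) also needs care with normalizations, since $\zeta$ is only defined up to $\tau(\cdot,\chi)$, so $\ua$ is only defined modulo $q^{\BZ}$ adjusted by $B$. I would handle both by comparing Fourier expansions of explicit global sections, using \Cref{definite}, which pins down $\zeta$ up to exactly this ambiguity.
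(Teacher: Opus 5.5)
Your proposal is correct and is essentially the paper's proof. You identify $M(\Gt,i)$ with $G_w^g$ through products of relatively complete models, reduce by multiplicativity and compatibility with $\pi_i^*$, $m_{ij}^*$ to the rank-one bundles $\mathcal{O}([-1])$ and $\mathcal{O}([-a^{-1}])$, and settle these by checking that the section $\sum_\lambda q^{(\lambda^2-\lambda)/2}a^\lambda Z^\lambda$ supplied by \Cref{definite} vanishes at $Z=-a^{-1}$, exactly as the paper does. The differences are cosmetic: you argue through $F_{IS}$ instead of $M_{IS}$ and read off $a$ modulo $q^{\BZ}$ directly from the zero, whereas the paper first uses symmetry and the principal polarization to cut down to the $2$-torsion possibilities. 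Run step (a) in that order, since the Fourier expansion of the canonical section is an output of \Cref{definite} applied to $D_{x^2,a}$, not an input.
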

\begin{proof}
  First, the fact that $M(\Gt, i: Y \hookrightarrow \Gt_K)$ is $G_w^g$is obvious in the case $g=1$, as $G_w$ is by definition \cite{DR} given by applying $M$ to the $g = 1$ case of the degeneration datum we constructed, using the $g = 1$ case of the relatively complete model we constructed. For larger $g$, it follows from the standard fact that $M$ respects products, since products of relatively complete models (correctly fibered) are relatively complete models of products (in fact the one constructed in \Cref{KSDD} is a toric refinement of the $g$-th fibered power of the one for $g = 1$).

  Now we handle the line bundle part of the assertion. Let us begin with the case $\underline{a} = (1, \ldots, 1)$. Then both sides only depend on $Q$. The functor $M_{IS}$ respects tensor products of degeneration data, so by definition of the twisting procedure defining $\LL^Q$ in \Cref{twistQ} (and the obvious relatively complete models of the maps $m_{ij}$ and $\pi_i$), this case is reduced to the case of the quadratic form $Q(x) =  x^2$. The corresponding line bundle in either definition is symmetric and therefore determined by its $\laurent$-fiber. That fiber in the definition of \Cref{intro2} is nothing but $\mathcal{O}_{E_{w, \Tate}}([-1])$. In the definition of \Cref{KSDD}, it is a symmetric line bundle on $E_{w, \Tate}$ that induces a principal polarization $E_{w, \Tate} \to E_{w, \Tate}^\vee$ (since the homomorphism $\phi: Y \to X$ in the degeneration datum is an isomorphism). So there are in reality only $2$ or $4$ choices, corresponding to the $2$ or $4$ $2$-torsion sections of $E_{w, \Tate}$ over $\laurent$, depending on the parity of $w$. To see that this $2$-torsion section is exactly $[-1]$, one way is to use the Fourier expansion principle \Cref{definite} to produce a global section with Fourier expansion
  \[\sum_{\lambda \in \BZ} q^{\frac{\lambda^2 - \lambda}{2}}Z^\lambda\]
  and note that it vanishes along the section $Z = -1$. 
  
  It remains to show that the $\Pic^0$-twisting procedure, which on both sides depends on $\ua$, is the same. As before, this can be reduced to the $g = 1$ case, where it amounts to showing that the degeneration datum corresponding to the cocycle $y \mapsto a^y$ where $a \in \laurent^\times$ uniformizes $\mathcal{O}_{E_{w, \Tate}}([-a^{-1}] - [-1])$. Again, we wish to do this by explicitly comparing Fourier expansions of global sections by \Cref{definite}, so it is more convenient to actually reduce to the case of twists of $\LL^{Q}$ with $Q = x^2$. In \Cref{intro2}, this meaning of $\LL^{Q, a}$ is precisely $\mathcal{O}_{E_{w, \Tate}}([-a^{-1}])$, i.e., $a^*\mathcal{O}_{E_{w, \Tate}}([-1])$. On the other hand, by \Cref{definite}, the version defined in \Cref{KSDD} has a global section given by
  \[\sum_{\lambda \in \BZ} q^{\frac{\lambda^2 - \lambda}{2}}a^\lambda Z^{\lambda}\]
which vanishes on the relative Cartier divisor $[-a^{-1}]$. 
\end{proof}
\section{Computation of cohomology over $\laurent$}\label{nonequiv}
This section concerns the proof of \Cref{mainthm}. By \Cref{unif}, the line bundle $\LL^{Q, \ua}$ whose cohomology we wish to calculate is uniformized by the relatively complete model $(\Pt, \Lt^{Q, \ua}, \Nt)$ constructed in \Cref{KSDD}. For brevity, we treat $Q$ and $\ua$ as fixed and use the notation $\LL$ to denote $\LL^{Q, \ua}$.  Our technique of calculation of $H^\bullet(E_{w, \Tate}^g, \LL^{Q, \ua})$ is correspondingly to consider the double complex
\[E_{0}^{p, q} = C^p(Y, \check{C}^q(\mathcal{U}_\Sigma, \Lt^\wedge)),\]
where
\begin{enumerate}
\item The first $C^p$ denotes group cohomology cochains with respect to the action of $Y$.
\item $\mathcal{U}_\Sigma$ denotes the cover of $\Pt^\wedge$ given by the $(q^{1/w})$-adic formal completions of the basic toric affines $X_{\sigma} \times_{\BZ[q^{1/w}]} \pwr$ in $\Pt = X_\Sigma \times_{\BZ[q^{1/w}]} \pwr$, ranging over the cones $\sigma \in \Sigma$. 
  \item The second $\check{C}^q$ denotes the \v{C}ech complex for sheaves of abelian groups on $\Pt^\wedge$. 
\end{enumerate}

The cohomology of the total complex $H^\bullet(\mathrm{Tot}(E_0^{\bullet, \bullet}))$ can be computed by either one of the spectral sequences $^{I}E_{\bullet}^{\bullet, \bullet}$ or $^{II}E_{\bullet}^{\bullet, \bullet}$, where the first one has ``horizontal'' boundary maps in the $E_0$ page,
\[d: {}^IE_0^{p, q} \to {}^IE_0^{p + 1, q}\]
which are just the boundary maps for group cohomology cochains; and the second one has ``vertical'' boundary maps in the $E_0$ page,
\[d: {}^{II}E_0^{p, q} \to {}^{II}E_0^{p, q + 1}\]
which are induced by the boundary maps for \v{C}ech cochains.

We will first prove that the spectral sequence $^I E^{\bullet, \bullet}_\bullet$ degenerates at $E_2$, where it calculates the $\pwr$-modules $H^\bullet(P_{w, g}, \mathscr{L})$. By flatness of localization, the cohomology of the $\laurent$-fiber (the computation of which is the goal of all of this) is the cohomology of the total complex of $E_0^{\bullet, \bullet} \otimes \laurent$, which is computed by $^{II}E_{\bullet}^{\bullet, \bullet} \otimes \laurent$. We will then deduce \Cref{mainthm} by proving that this spectral sequence also degenerates at $E_2$ and converges to $H^i(Y, \mathcal{O}(\adictorus))$ where $y \in Y$ acts by \Cref{finalaction}.

Here the notation is that $P_{w, g}$ is the algebrization of $\Pt^\wedge/Y = \mathfrak{P}$ involved in the definition of Mumford's uniformization functor, \Cref{mumford-construction}. Recall that $P_{w, g}$ is a proper $\pwr$-scheme, and it contains the Kuga--Sato degeneration $G_w^g$ as an open subscheme (they are in fact both equal to $E_{w, \Tate}^g$ over $\laurent$). 

\subsection{Spectral sequence, first direction}
We are meant to prove:
\begin{proposition}\label{ss1}
The spectral sequence $^IE^{\bullet, \bullet}_{\bullet}$ degenerates at $E_2$, where it converges to $H^i(P_{w, g}, \mathscr{L})$.
\end{proposition}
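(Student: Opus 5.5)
The plan is to compute the spectral sequence $^IE$ by taking $Y$-group cohomology first and \v{C}ech cohomology second. The key input is that $Y$ acts freely on the set of cones of $\Sigma$: for $y \neq 0$, translation by $wy$ moves every simplex $\sigma_{y',\pi}$ (and every face of it) to a different one. Hence each \v{C}ech term is coinduced as a $Y$-module. Concretely, let $T$ be a set of representatives of the $Y$-orbits on ordered tuples $(\sigma_0,\ldots,\sigma_q)$ of cones with nonempty intersection. Then
\[\check{C}^q(\mathcal{U}_\Sigma, \Lt^\wedge) = \prod_{\underline{\sigma}} \Gamma(U_{\underline{\sigma}}, \Lt^\wedge) \cong \prod_{\underline{\sigma} \in T} \mathrm{CoInd}_1^Y \Gamma(U_{\underline{\sigma}}, \Lt^\wedge).\]
One should check that $T$ is finite for each $q$, since there are finitely many simplices modulo $wY$. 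Even without finiteness, products of coinduced modules are coinduced, so this is not essential here.

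By Shapiro's lemma, $H^p(Y, \check{C}^q) = 0$ for $p>0$ and $H^0(Y,\check{C}^q) = \prod_{\underline{\sigma}\in T}\Gamma(U_{\underline{\sigma}},\Lt^\wedge)$. So the $E_1$ page of $^IE$, which is $Y$-cohomology of the columns, is concentrated in the row $p=0$. The sequence therefore degenerates at $E_2$, and $H^\bullet(\mathrm{Tot})$ equals the cohomology of the complex $(\check{C}^\bullet(\mathcal{U}_\Sigma,\Lt^\wedge))^Y$. (Convergence is not an issue since the double complex lives in the first quadrant.)

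Next I identify this invariant complex with a \v{C}ech complex on the quotient. The images $\overline{U}_\sigma$ of the $U_\sigma$ in $\mathfrak{P} = \Pt^\wedge/Y$ form an open cover $\overline{\mathcal{U}}$ of $\mathfrak{P}$. Here I use that $\Pt_k \to P_k$ is étale and that, for each $\sigma$, $U_\sigma$ meets no $y\cdot U_\sigma$ with $y \neq 0$. This last point should follow from the freeness above: $y\cdot U_\sigma \cap U_\sigma = U_{\sigma \cap (\sigma+wy)}$, which I expect to be empty modulo $q^{1/w}$ for $w$ large enough relative to the simplices, or after refining the cover. So $U_\sigma \to \overline{U}_\sigma$ is an isomorphism and $\Lt^\wedge$ descends to $\LL^\wedge$. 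Moreover, $Y$-invariant \v{C}ech cochains on $\Pt^\wedge$ are exactly \v{C}ech cochains on $\mathfrak{P}$ for the cover $\overline{\mathcal{U}}$, with $\overline{U}_{\underline{\sigma}}$ the intersections in $\mathfrak{P}$. These are disjoint unions over $Y$-translates, matching the coinduced description.

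Since the $\overline{U}_{\underline{\sigma}}$ are affine formal schemes, the cover computes $H^\bullet(\mathfrak{P}, \LL^\wedge)$. Formal GAGA \cite[Th\'eor\`eme (4.1.5)]{EGAIII}, applied to the proper $\pwr$-scheme $P_{w,g}$ with $\LL$ the algebrization, gives $H^\bullet(\mathfrak{P},\LL^\wedge) \cong H^\bullet(P_{w,g},\LL)^\wedge = H^\bullet(P_{w,g},\LL)$, the last equality because the modules are finite over the complete ring $\pwr$.

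I expect the main obstacle to be this descent step: making precise that the quotient cover is honestly a cover by affines with the right intersections, given that $Y$-translates of a single toric affine may overlap. If the translates overlap, I would instead take the cover of $\mathfrak{P}$ by images of the opens $U_\sigma$ for which the quotient map is injective, refining $\Sigma$ by subdivision if necessary (as the Remark permits). Alternatively, I would argue directly through the Cartan–Leray spectral sequence for the étale Galois covering $\Pt_k \to P_k$ with group $Y$ at each finite level $k$, then pass to the limit in $k$ using Mittag-Leffler, since all groups are finite $R/I^k$-modules on the quotient side.
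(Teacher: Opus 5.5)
Your first half (freeness of the $Y$-action on tuples of maximal cones, the coinduced description of $\check{C}^q(\mathcal{U}_\Sigma,\Lt^\wedge)$, Shapiro's lemma, degeneration at $E_2$, and $H^\bullet(\Tot)$ equal to the cohomology of the $Y$-invariant complex) is exactly the paper's argument. The gap is in the descent step.

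You claim that $U_\sigma$ meets no $y\cdot U_\sigma$ with $y\neq 0$ modulo $q^{1/w}$. This is false for $w=1$, and since $w$ is fixed you cannot take it large. Already for $g=w=1$, the translate of the cone over $[0,1]$ is the cone over $[1,2]$. The two cones share the ray $\rho$ through $(1,1)$, and $U_\rho$ modulo $q$ is $\BG_{m,\BZ}$, the open orbit in the special-fibre component attached to $\rho$, which is nonempty. So $U_\sigma\to P_k$ is not injective, the images $\overline{U}_\sigma$ are not copies of $U_\sigma$, and the $Y$-invariant complex is not a Zariski \v{C}ech complex on $\mathfrak{P}$. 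This is exactly the failure of proper discontinuity that the paper warns about.

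For $w\geq 2$ your route does work with this fan. The simplex $\sigma_{y',\pi}\subset y'+[0,1]^g$ can meet its translate by $wy$ only if $wy\in[-1,1]^g$, forcing $y=0$. In that case your route is even slightly shorter than the paper's, since it works directly on $\mathfrak{P}$ and avoids the finite-level limit. Subdividing $\Sigma$ would repair $w=1$, but it changes $\Pt$ and hence the double complex ${}^IE$ the proposition is about, so it would need a further comparison argument.

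The missing idea is that you never need a Zariski cover. Fix a finite level $\ell$ and let $\sigma$ run over representatives of the $Y$-orbits of maximal cones. The maps $U_\sigma\times_R R/I^\ell\to P_\ell$ then form an \'etale cover. Because $\Pt_\ell\to P_\ell$ is a $Y$-torsor, its fibre products are $\bigsqcup_{y\in Y}(U_\sigma\cap y\cdot U_\tau)\times_R R/I^\ell$, with finitely many nonempty pieces, each an affine toric piece. So the level-$\ell$ analogue $\prod_{T}\Gamma(U_{\underline{\sigma}}\times_R R/I^\ell,\Lt)$ of your $Y$-invariant complex is literally the \'etale \v{C}ech complex of an acyclic cover. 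It therefore computes $H^\bullet_{\text{\'et}}(P_\ell,\mathscr{L}_\ell)=H^\bullet(P_\ell,\mathscr{L}_\ell)$ for every $w$; this is \Cref{qt-cech}.

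One then takes $\varprojlim_\ell$:
\begin{itemize}
\item the system of complexes is Mittag-Leffler, because restriction of sections from $U\times_R R/I^{\ell+1}$ to $U\times_R R/I^{\ell}$ on affines is surjective;
\item $\{H^q(P_\ell,\mathscr{L}_\ell)\}_\ell$ is Mittag-Leffler with limit $H^q(P_{w,g},\mathscr{L})$ by the theorem on formal functions \cite[Corollaire (4.1.7)]{EGAIII};
\item hence \cite[Ch.~0, (13.2.3)]{EGAIII} lets the limit pass through cohomology.
\end{itemize}
This is in substance your final ``alternatively'' (Cartan--Leray for the $Y$-torsor $\Pt_\ell\to P_\ell$ is an equivalent packaging), and it should be the main line rather than a fallback. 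Note that your stated reason for Mittag-Leffler does not suffice: $R/I^k=\BZ[q^{1/w}]/(q^{k/w})$ is not Artinian, so finiteness over it gives nothing. The Mittag-Leffler property has to come from the theorem on formal functions.
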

\begin{proof}
  Since $P_{w, g}$ is a proper $\pwr$-scheme, formal GAGA (for instance \cite[Proposition (5.1.2)]{EGAIII}) says that
  \[H^i(P_{w, g}, \mathscr{L}) \simeq H^i(\widehat{P}_{w, g}, \LL^\wedge).\]
  By construction, $\widehat{P}_{w, g}$ is the quotient by the action of $Y$ of $\Pt^\wedge$, which is called $\mathfrak{P}_{w, g}$. And $\LL^\wedge$ is the descent of $\Lt^\wedge$ (from \Cref{KSDD}) to that quotient. So our goal is to prove that $^IE^{\bullet, \bullet}_{\bullet}$ degenerates at $E_2$, where it computes $H^i(\mathfrak{P}_{w, g}, \Lt^\wedge/Y)$.

  The action of $Y$ on the set of cones $\widetilde{\sigma}_{y, \pi}$ in $\Sigma$ is free. Letting $I = Y \times S_g$ be the index set for those cones (so $Y$ acts on $I$ by $x \cdot (y, \pi) = (y + wx, \pi)$), it follows that the diagonal action of  $Y$  on $I^{q + 1}$ is free for all $q \geq 0$. Let $\mathcal{F}_q$ be a fundamental set for the action of $Y$ on the subset of $I^{q+1}$ consisting of $\underline{i} \in I^{q+1}$ such that $\cap_{k = 0}^q \sigma_{i_k} \neq \emptyset$ (this is the same as requiring $\cap_{k = 0}^q \widehat{X}_{\widetilde{\sigma}_{i_k}} \neq \emptyset$, since it means that $\cap_k \widetilde{\sigma}_{i_k} = \{0\}$ so the corresponding affines in $X_\Sigma$ only intersect over $\laurent$). The reason why we restricted to $\mathcal{F}_q$ is that it is a finite set.  In this notation,
  \[\check{C}^q(\mathcal{U}_\Sigma, \Lt^\wedge) = \prod_{y \in Y} \prod_{\underline{i} \in \mathcal{F}_q}\Gamma(\cap_{k=0}^q \widehat{X}_{y \cdot (i_k)}, \Lt^\wedge),\]
  where $X_{i}$ is shorthand for $X_{\sigma_{i}}$.
  The action of $Y$ on the \v{C}ech complex is by the product of the collection of isomorphisms coming from the $Y$-equivariant structure on $\Lt$,
  \[y^*: \Gamma(\cap_{k = 0}^q \widehat{X}_{i_k}, \Lt^\wedge) \to \Gamma(\cap_{k = 0}^q \widehat{X}_{(-y) \cdot i_k}, \Lt^\wedge).\]
  In fact the intersections
  $\cap_{k=0}^p X_{i_k}$ are themselves affines in $\widetilde{P}$ coming from the cone $\cap_{k=0}^p \widetilde{\sigma}_{i_k}$. The line bundle $\widetilde{\mathscr{L}}$ is trivial on those affines (it is locally principal given on affines by divisor of the element of $\BZ \otimes X$ that defines the piecewise-linear function $\psi$ on the cone corresponding to the affine), and the pullback map may be concretely described by the isomorphism
  \[y^*: \Gamma(\cap_{k = 0}^q \widehat{X}_{i_k}, \widehat{\mathcal{O}}) \to \Gamma(\cap_{k = 0}^q \widehat{X}_{(-y) \cdot i_k}, \widehat{\mathcal{O}})\]
  given by
  \[f(\underline{Z}) \mapsto f(q^{y_1}Z_1, \ldots, q^{y_g}Z_g).\]
  In other words,
\[\check{C}^q(\mathcal{U}_\Sigma, \Lt^\wedge) = \mathrm{coInd}_{1}^Y \left(\prod_{\underline{i} \in \mathcal{F}_q}\Gamma(\cap_{k=0}^q \widehat{X}_{i_k}, \Lt^\wedge)\right).\]
The next page of the spectral sequence is therefore
\begin{align*}^IE_{1}^{p, q} &= H^p\left(Y, \mathrm{coInd}_{1}^Y \left(\prod_{\underline{i} \in \mathcal{F}_q}\Gamma(\cap_{k=0}^q \widehat{X}_{i_k}, \Lt^\wedge)\right)\right)\\
  &= \begin{cases}\prod_{\underline{i} \in \mathcal{F}_q}\Gamma(\cap_{k=0}^q \widehat{X}_{i_k}, \Lt^\wedge), \qquad \text{ if } p = 0, \\
    0, \text{ if } p \geq 1
    \end{cases}
\end{align*}
by Shapiro's lemma. The vertical morphisms on the $E_1$ page are induced by taking $Y$-fixed points from the \v{C}ech boundary maps
\[d: \prod_y \prod_{\underline{i} \in \mathcal{F}_q} \Gamma(\cap_{k = 0}^q \widehat{X}_{i_k}, \Lt^\wedge) \to \prod_y \prod_{\underline{i} \in \mathcal{F}_{q+1}} \Gamma(\cap_{k = 0}^{q+1} \widehat{X}_{i_k}, \Lt^\wedge)\]
given by
\[d((s_{\underline{i}})_{\underline{i} \in I^{q+1}})_{\underline{j}} = \sum_{k = 0}^{q+1} (-1)^k s_{i_0, \ldots, \hat{i}_k, \ldots, i_{q+1}|_{\cap_k \widehat{X}_{i_k}}}\]
for all $j \in I^{q+2}$. For
\[(s_{\underline{i} \in \mathcal{F}_q}) \in \prod_{\underline{i}} \Gamma(\cap_{k = 0}^q \widehat{X}_{i_k}, \widehat{\mathscr{L}}) \cong H^0\left(Y, \mathrm{coInd}_{1}^Y \left(\prod_{\underline{i} \in \mathfrak{F}_q}\Gamma(\cap_{k=0}^q \widehat{X}_{i_k}, \Lt^\wedge\right)\right),\]
its boundary
\[d((s_{\underline{i} \in \mathcal{F}_q})) \in {^IE_{1}^{q+1, 0}}\]
is given for all $\underline{j} \in \mathcal{F}_{q+1}$ by
\[d((s_{\underline{i} \in \mathcal{F}_q}))_{\underline{j}} = \sum_{k=0}^{q+1} (-1)^k \widetilde{s}_{j_0, \ldots, \hat{j}_{k}, \ldots, j_{q+1}},\]
where $\widetilde{s}_{j_0, \ldots, \hat{j}_{k}, \ldots, j_{q+1}}$ is accounting for the possibility that $(j_0, \ldots, \hat{j}_{k}, \ldots, j_{q+1}) \in I^{q+1}$ might not be in $\mathcal{F}_q$: so it refers to the element of the $Y$-fixed points of the coinduced module corresponding to $s$. In other words, if $y \in Y$ is the unique element such that
\[(j_0+y, \ldots, \hat{j}_{k}, \ldots, j_{q+1} + y) \in \mathcal{F},\]
then
\[\widetilde{s}_{j_0, \ldots, \hat{j}_{k}, \ldots, j_{q+1}} := y^*(s_{j_0+y, \ldots, \hat{j}_{k}, \ldots, j_{q+1} + y}).\]
The page $^IE_2^{\bullet, \bullet}$ has $^IE_2^{p, q} = 0$ for $p \geq 1$, and 
\[^IE_2^{0, q} = H^q\left(\prod_{\underline{i} \in \mathcal{F}^\bullet} \Gamma(\cap_{k = 0}^\bullet \widehat{X}_{i_{k}}, \Lt^\wedge), d\right)\]
for $d$ as we have just defined. This is essentially the \v{C}ech cohomology of $\Lt^\wedge$, except everything is taken up to the $Y$-action. In fact, this is exactly the \v{C}ech cohomology that computes the cohomology of $P_{w, g}$ with coefficients in $\widehat{\mathscr{L}}$, which is what remains to prove. We will do it by proving it for $P_\ell := \widehat{P}/Y \times_A A/I^\ell$ for all $\ell \geq 0$  (this is precisely what is done below in \Cref{qt-cech}) and carefully passing to the inverse limit. Indeed, since $P_{w, g}$ is proper over $\pwr$, by the theorem on formal functions \cite[Corollaire (4.1.7)]{EGAIII} the cohomology we want is 
\begin{align*}H^q(\mathfrak{P}_{w, g}, \widehat{\mathscr{L}}) &\cong \varprojlim_\ell H^q(P_\ell, \Lt_\ell)\\
  &\overset{\text{by \Cref{qt-cech}}}{\cong} \varprojlim_\ell H^q\left(\prod_{\underline{i} \in \mathcal{F}_q} \Gamma(\cap_{k = 0}^q X_{i_{k}} \times_A A/I^\ell, \Lt_\ell), d_\ell\right)\\ 
  &\cong H^q\left(\prod_{\underline{i} \in \mathcal{F}_q} \Gamma(\cap_{k = 0}^q \widehat{X}_{i_{k}}, \widehat{\mathscr{L}}), d\right)\\ 
\end{align*}
To pass this projective limit past the cohomology, we invoke \cite[Ch. 0, Proposition 13.2.3]{EGAIII}, which says that this is allowed because (1) the inverse system of complexes involved satisfies the Mittag--Leffler condition, and (2) the $H^q$'s satisfy the Mittag--Leffler condition (itself following from \cite[Corollaire (4.1.7)]{EGAIII}). In the last line of the above computation, this was done, and the finiteness of $\mathcal{F}_q$ was also used to pass the inverse limit past the product. 
\end{proof}

\begin{lemma}\label{qt-cech}
  Let $\ell \geq 1$. The complex of $R/I^\ell$-modules
  \[\left(\prod_{\underline{i} \in \mathcal{F}_p} \Gamma(X_{\tilde{\sigma}_{i_0} \cap \cdots \cap \tilde{\sigma}_{i_p}} \times_R R/I^\ell, \widetilde{\mathscr{L}}), d\right)\]
  defined in the analogous way to the previous proof computes the cohomology of $P_\ell := (\widetilde{P} \times_R R/I^\ell)/Y$ with coefficients in the sheaf descended from $\widetilde{\mathscr{L}}$. 
\end{lemma}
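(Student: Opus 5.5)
The plan is to realize the displayed complex as an honest \v{C}ech complex on the scheme $P_\ell$ for an affine open cover, and then invoke the standard fact that \v{C}ech cohomology of a quasi-coherent sheaf for an affine cover of a separated scheme computes sheaf cohomology. Set $\Pt_\ell := \Pt \times_R R/I^\ell$ and let $\pi_\ell: \Pt_\ell \to P_\ell$ be the quotient map, which by step (2) of \Cref{mumford-construction} is an \'etale surjection. The first step is to check that for a suitable ``small'' cone $\tilde\sigma_i$, the restriction of $\pi_\ell$ to $X_{\tilde\sigma_i}\times_R R/I^\ell$ is an open immersion. I would argue via the freeness of the $Y$-action on cones: the translates $y\cdot X_{\tilde\sigma_i}$, $y \neq 0$, meet $X_{\tilde\sigma_i}$ modulo $I^\ell$ only where the corresponding cones intersect nontrivially. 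By the formula for the action, $x\cdot(y,\pi) = (y+wx,\pi)$, and by the cubulation of width $w$ used for $\Lt$, this cannot happen except in $y=0$, provided we choose the cones small enough (refining $\Sigma$ if needed, which the remark after the smoothness lemma permits). Consequently $V_i := \pi_\ell(X_{\tilde\sigma_i}\times_R R/I^\ell)$ is an affine open of $P_\ell$. As $i$ ranges over a set of representatives of $I/Y$, these opens cover $P_\ell$, since $\Pt = \bigcup_y y\cdot U$ by condition (i) of \Cref{rcm}.

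The second step is to identify intersections. For representatives $i_0,\dots,i_p$, the intersection $V_{i_0}\cap\cdots\cap V_{i_p}$ pulls back to the disjoint union over all $Y$-translates $(i_0+y_0,\dots,i_p+y_p)$ with nonempty common intersection, modulo the diagonal $Y$-action. Choosing one element from each diagonal $Y$-orbit gives exactly the set $\mathcal{F}_p$. Thus
\[\check C^p(\{V_i\}, \mathscr{L}_\ell) \cong \prod_{\underline{i}\in\mathcal{F}_p}\Gamma(X_{\tilde\sigma_{i_0}\cap\cdots\cap\tilde\sigma_{i_p}}\times_R R/I^\ell, \Lt_\ell),\]
using that the descended sheaf $\mathscr{L}_\ell$ pulls back to $\Lt_\ell$ and that $\pi_\ell$ is an isomorphism on each piece. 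The \v{C}ech differential transports to $d_\ell$: a face of a tuple in $\mathcal{F}_{p+1}$ need not lie in $\mathcal{F}_p$, and rewriting it via the unique translate in $\mathcal{F}_p$ introduces exactly the twist $y^*$ from the equivariant structure, as in the formula for $\widetilde{s}$ in the proof of \Cref{ss1}.

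The final step uses that $P_\ell$ is separated, being proper over $R/I^\ell$ as a closed fiber of the proper algebrization. Hence finite intersections of the affine opens $V_i$ are affine, and $\mathscr{L}_\ell$ is quasi-coherent. Leray's theorem then shows that this \v{C}ech complex computes $H^\bullet(P_\ell,\mathscr{L}_\ell)$.

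I expect the main obstacle to be the first step: showing injectivity of $\pi_\ell$ on $X_{\tilde\sigma_i}$ modulo $I^\ell$ and the precise description of the fibered intersections. This is where the paper's convention that $\mathcal{F}_p$ records only tuples with nonempty intersection modulo $q^{1/w}$ must be reconciled with nonemptiness in $\Pt_\ell$. My first attempt would be to use the toric description: the closed fiber of $X_\sigma\cap X_{\sigma'}$ is nonempty iff $\sigma\cap\sigma'\neq\{0\}$. If this route becomes awkward, the fallback is to avoid the open-immersion claim entirely. Instead, one uses that for a free action, equivariant sheaf cohomology on $\Pt_\ell$ equals cohomology on $P_\ell$, and applies the Shapiro-lemma computation from \Cref{ss1} directly modulo $I^\ell$. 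That argument is legitimate here because products commute with everything at finite level.
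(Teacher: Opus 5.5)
Your Zariski argument breaks down in exactly the case the paper singles out, $w=1$. Whether $X_{\tilde\sigma}\times_R R/I^\ell$ meets its nonzero $Y$-translates is governed by the cones of $\Sigma$, which are the unit simplices $\sigma_{y,\pi}$. The coarser width-$w$ cubulation on which $\psi$ is linear plays no role here. Since $y\cdot\tilde\sigma$ is the cone over $\sigma+wy$, the translates are disjoint modulo $I^\ell$ iff $\sigma\cap(\sigma+wy)=\emptyset$ for all $y\neq 0$, and this holds iff $w\geq 2$.

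For $w=1$ the vertex $(1,\ldots,1)$ lies in $\sigma_\pi\cap(\sigma_\pi+(1,\ldots,1))$. So the chart meets its translate in $X_{\BR_{\geq 0}(1,(1,\ldots,1))}\times_R R/I^\ell\neq\emptyset$, and the chart does not inject into $P_\ell$. For instance, when $g=1$ the special fiber of a chart is two crossing affine lines, mapping \'etale onto the N\'eron $1$-gon and $2$-to-$1$ over its smooth locus. Hence your $V_i$ are not open subschemes, and there is no Zariski cover to which Leray's theorem applies.

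Refining $\Sigma$ does not repair this. It replaces $\Pt$, $P_\ell$, $\mathcal{F}_p$, and hence the displayed complex, by different objects, so you would be proving the analogous statement for another model. Comparing that model with the given one needs an argument you have not supplied. The remark after the smoothness lemma only lets you transport $\psi$ to the refinement; it says nothing about the quotients or their cohomology. The paper does allow such a refinement as a shortcut for the downstream argument in the proof of \Cref{K-cpt}, but that is not this lemma. For $w\geq 2$ your argument does go through, once the justification is corrected as above.

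The missing idea is to give up on open immersions. The paper uses $\mathcal{U}_\ell=\{X_{\tilde\sigma_i}\times_R R/I^\ell : i\in\mathcal{F}_0\}$ as an \emph{\'etale} cover of $P_\ell$. Because $\Pt_\ell\to P_\ell$ is an \'etale $Y$-torsor, the fibered product $X_{\tilde\sigma}\times_{P_\ell}X_{\tilde\tau}=\bigsqcup_{y\in Y}X_{\tilde\sigma\cap y\cdot\tilde\tau}$ is a finite disjoint union of affine toric pieces. So the cover is acyclic, and its \'etale \v{C}ech complex is literally the displayed complex; your step-two bookkeeping goes through verbatim with fibered products over $P_\ell$ in place of intersections. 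One then concludes because \'etale and Zariski cohomology agree for the quasi-coherent sheaf $\mathscr{L}_\ell$.

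Your fallback is a correct and equivalent repackaging of this. It identifies $H^\bullet(P_\ell,\mathscr{L}_\ell)$ with $Y$-equivariant cohomology of $\Lt_\ell$ on $\Pt_\ell$ via the Cartan--Leray spectral sequence, computes the latter by $C^\bullet(Y,\check{C}^\bullet)$ for the $Y$-stable affine toric cover, and collapses with Shapiro's lemma. It too must be set up in the \'etale topology, for the same reason, and it should be your main argument rather than the backup.
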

\begin{proof}
  Since one is now working with schemes, we are allowed to use the \'etale site to compute Zariski cohomology of quasicoherent sheaves. This is useful because unfortunately the action of $Y$ on $\widetilde{P} \times_R R/I^\ell$ is not always properly discontinuous (if $w = 1$ in particular this is a problem), so the opens $X_{\widetilde{\sigma}}\times_R R/I^\ell$ do not map isomorphically onto opens of the $Y$-quotient. However, 
  \[\mathcal{U}_\ell = \left\{X_{\widetilde{\sigma}_i} \times_R R/I^\ell: i \in \mathcal{F}_0\right\}\]
  is an \'etale cover of $P_\ell$ (\cite[Theorem 3.10]{mumford_compositio}). Plus, the pairwise fibered products over $P_\ell$ of the various objects of the cover are disjoint unions of affine toric pieces,
  \[X_{\widetilde{\sigma}} \times_{P_\ell} X_{\widetilde{\tau}} = \bigsqcup_{y \in Y} X_{\widetilde{\sigma} \cap y \cdot \widetilde{\tau}},\]
  so this is an acyclic cover. Finally, the \v{C}ech cohomology of $\mathscr{L}$ with respect to this \'etale cover is by definition the same as the cohomology of the complex involved in the statement of this lemma. Therefore, that complex computes
  \[\check{H}^\bullet_{\text{\'et}}(\mathcal{U}_\ell, \mathscr{L}_\ell),\]
  which is the same as
  \[H^\bullet_{\text{\'et}}(P_\ell, \mathscr{L}_\ell)\]
  since $P_\ell$ is a projective $A/I^\ell$-scheme and $\mathcal{U}_\ell$ is acyclic. But since $\mathscr{L}_\ell$ is quasicoherent, this is exactly the same as the Zariski cohomology of $\mathscr{L}_\ell$, as desired. 
\end{proof}

\subsection{Spectral sequence, second direction after taking generic fiber}

The previous subsection shows that $^{I}E_{\bullet}^{\bullet, \bullet}$ and $^{II}E_{\bullet}^{\bullet, \bullet}$ both compute the cohomology of $P_{w, g}$ with coefficients in $\mathscr{L}$. 

The thing we actually want to compute is the cohomology of the restriction to $E_{w, \Tate}^g$
  \begin{align*}H^{\bullet}(E_{w, \Tate}^g, \mathscr{L}|_{E_{w, \Tate}^g}) &= H^\bullet(P_{w, g} \times_{\pwr} \laurent, \mathscr{L})\\ &\cong H^\bullet(P_{w, g}, \mathscr{L}) \otimes \laurent \\ &\cong H^\bullet(\mathfrak{P}_{w, g}, \LL^\wedge) \otimes \laurent \end{align*}
  where the last line is by \cite[Proposition (5.1.2)]{EGAIII}. We proved in \Cref{ss1} that this is the same as \[^IE_2^{0, \bullet} \otimes \laurent = {}^IE_\infty^{0, \bullet} \otimes \laurent.\]
  By flatness of localization and finiteness of the products involved in the \v{C}ech complex in the proof of \Cref{ss1} whose cohomology is equal to $^IE_2^{0, \bullet}$, we deduce that $H^{\bullet}(E_{w, \Tate}^g, \mathscr{L}|_{E_{w, \Tate}^g})$ is converged to by the spectral sequence starting at
  \[^{I, \BZ((q^{1/w}))}E_1^{p, q} =  H^p\left(Y, \mathrm{coInd}_1^Y \left(\prod_{\underline{i} \in \mathcal{F}_q} \Gamma(\cap_{k=0}^q \widehat{X}_{i_k}, \widehat{\mathscr{L}}) \otimes_{\pwr} \laurent\right)\right),\]
  and therefore equals the cohomology of the total complex of
  \[^{I, \BZ((q^{1/w}))}E_0^{p, q} = C^p\left(Y, \mathrm{coInd}_1^Y \left(\prod_{\underline{i} \in \mathcal{F}_q} \Gamma(\cap_{k=0}^q \widehat{X}_{i_k}, \widehat{\mathscr{L}}) \otimes_{\pwr} \laurent\right)\right).\]
  A subtle point already alluded to in \Cref{invertq} is that due to the fact that tensor products do not commute with infinite products, this is not the same as $^IE_0^{p, q} \otimes \laurent$. This ``unbounded denominators'' issue is why we had to be so careful about choosing a finite fundamental domain for the action of $Y$ on our infinite cover. Our goal being to compute the cohomology of the total complex of this double complex in some different way, we take the same double complex but view it as the $E_0$ page of the vertical-first double complex spectral sequence with identical $0$-th page
  \[^{II, \laurent}E_0^{p, q} =C^p\left(Y, \mathrm{coInd}_1^Y \left(\prod_{\underline{i} \in \mathcal{F}_q} \Gamma(\cap_{k=0}^q \widehat{X}_{i_k}, \Lt^\wedge) \otimes_{\pwr} \laurent\right)\right).\]
  The proof of \Cref{mainthm} is therefore reduced to
  \begin{proposition}
    The spectral sequence $^{II, \laurent}E_\bullet^{\bullet, \bullet}$ degenerates at $E_2$ and converges to the group cohomology of $Y$ with coefficients in $\mathcal{O}(\adictorus)$. 
  \end{proposition}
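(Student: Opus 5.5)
The plan is to run the vertical (\v{C}ech-first) spectral sequence and identify each column with a \v{C}ech complex for the structure sheaf on the adic torus $\adictorus$. For fixed $p$, the column ${}^{II,\laurent}E_0^{p,\bullet}$ is $C^p(Y,-)$ applied to the complex
\[M^\bullet := \mathrm{coInd}_1^Y\Big(\prod_{\underline{i}\in\mathcal{F}_\bullet}\Gamma(\cap_k \widehat{X}_{i_k},\Lt^\wedge)\otimes_{\pwr}\laurent\Big) = \prod_{y\in Y}\prod_{\underline{i}\in\mathcal{F}_\bullet}\Gamma(\cap_k \widehat{X}_{y\cdot i_k},\Lt^\wedge)\otimes_{\pwr}\laurent .\]
Here the tensor product is taken inside the product over $y$. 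Since $C^p(Y,-)=\Hom(\BZ[Y^{p+1}],-)$ is exact, the $E_1$ page is $C^p(Y,H^q(M^\bullet))$. It therefore suffices to prove two things. First, $H^q(M^\bullet)=0$ for $q\geq 1$. Second, $H^0(M^\bullet)\cong\mathcal{O}(\adictorus)$, $Y$-equivariantly for the action \eqref{finalaction}. Granting these, $E_1$ is concentrated in the row $q=0$ and equals the standard cochain complex $C^\bullet(Y,\mathcal{O}(\adictorus))$. Hence $E_2^{p,0}=H^p(Y,\mathcal{O}(\adictorus))$, all other entries vanish, and the sequence degenerates at $E_2$.

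For the identification I would work cone by cone. For a cone $\sigma=\cap_k\widetilde\sigma_{i_k}$ of $\Sigma$, the ring $\Gamma(\widehat X_\sigma,\widehat{\mathcal O})$ is the $q^{1/w}$-adic completion of $\pwr[\sigma^\vee\cap(\BZ\oplus X)]$. After inverting $q^{1/w}$, this is the ring of power series $\sum_\chi a_\chi\underline Z^\chi$ that converge on the rational affinoid subset of $\adictorus$ cut out by $|q|^{t}$-type inequalities. These subsets are indexed by the polyhedron $\sigma\cap(\{1\}\times Y_\BR)$, rescaled by $w$. When $\sigma$ is a maximal cone this polyhedron is a simplex, and in general it is a face of one. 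Concretely, $\widehat X_\sigma\otimes\laurent$ corresponds to the affinoid $U_\sigma=\{\underline t:=(-v(Z_i)/v(q))\in \sigma\text{-face}\}$, and its ring of functions is the Tate-type algebra of series with $v_q(a_\chi)+\langle x,\chi\rangle\geq -C$ for $x$ in that polytope and $v_q(a_\chi)+\langle x,\chi\rangle\to\infty$. The line bundle $\Lt$ is trivialized on each such affine by the monomial attached to $\psi|_\sigma$, so twisting by $\Lt$ only changes the $Y$-action. After trivializing, the $Y$-pullbacks become $f\mapsto q^{Q'(y)}\underline Z^{B(y)}\prod a_j^{y_j} f(q^{\underline y}\underline Z)$, which recovers \eqref{finalaction}.

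The essential point is that the product over $y\in Y$ is now outside the tensor product. This makes $M^\bullet$ literally the (alternating) \v{C}ech complex of $\mathcal{O}$ on $\adictorus$ for the locally finite affinoid cover $\{U_{y\cdot i}\}$. Each $U_{y\cdot i}$ is the affinoid annulus-simplex $w y+\sigma_i$, and all finite intersections are again rational affinoids of the same type.

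Two consequences follow. First, $H^0(M^\bullet)$ consists of families of compatible functions on a cover of all of $\adictorus$. These glue to series lying in every one of the local Tate algebras, and a series is convergent on every simplex exactly when the condition of \Cref{veryconvergent} holds, because the simplices cover $Y_\BR$. So $H^0(M^\bullet)=\mathcal{O}(\adictorus)$. Second, the vanishing of $H^q(M^\bullet)$ for $q\geq1$ follows by combining two facts: Tate acyclicity for the affinoid pieces, which gives \v{C}ech cohomology equal to sheaf cohomology for this cover, and \cite[Theorem 2.6.5]{KL2}, which gives $H^q(\adictorus,\mathcal{O})=0$ since $\adictorus$ is an increasing union of affinoid polyannuli with dense restriction maps (a Stein space).

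The step I expect to be the main obstacle is the dictionary between formal and adic pieces. We must show that $\Gamma(\widehat X_\sigma,\widehat{\mathcal O})\otimes_{\pwr}\laurent$ is exactly $\mathcal O(U_\sigma)$ for the correct rational subset $U_\sigma$, compatibly with restriction maps. In particular, completion-then-inverting $q$ must give the affinoid algebra with no extra or missing elements. This requires checking that $\sigma^\vee\cap(\BZ\oplus X)$ is finitely generated, with generators read off from the vertices of the simplex, and that the $U_\sigma$ cover $\adictorus$ (every point has $\underline t$ in some simplex). Once this identification is made, the rest is formal homological algebra.
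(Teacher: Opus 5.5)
Your proposal is correct and follows essentially the same route as the paper: pass the vertical cohomology through the exact functor $C^p(Y,-)$, identify the inner $H^0$ with $\mathcal{O}(\adictorus)$ by gluing cone-wise convergent series over the tiling of $Y\otimes\mathbf{R}$, and kill the inner $H^q$ for $q\geq 1$ by recognizing the complex as the \v{C}ech complex of an acyclic affinoid cover of $\adictorus$ and invoking \cite[Theorem 2.6.5]{KL2}. The formal-to-adic dictionary you flag as the main obstacle is exactly the step the paper outsources to Huber's functor $t$, and your check of $Y$-equivariance via the monomial trivializations fills in a point the paper leaves implicit.
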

  Taking cohomology in the vertical direction certainly commutes with group cohomology cochains, so
  \[^{II, \BZ((q^{1/w}))}E_1^{p, q} = C^p\left(Y, H^q\left(\prod_{\overline{i} \in I^{\bullet + 1}} \Gamma(\cap_{k = 0}^\bullet \widehat{X}_{i_k}, \Lt^\wedge) \otimes_{\pwr} \laurent\right)\right).\]

  All we need to do is calculate the $H^q$'s on the inside to recover the desired result. 
  
  It therefore suffices to prove the following two facts: the identification of the inner $H^0$ with $\mathcal{O}(\adictorus)$ (in \Cref{H0})  and the vanishing of the inner $H^q$ for $q \geq 1$ (in \Cref{Hq}). 

  \begin{proposition}\label{H0}
    \[H^0\left(\prod_{\overline{i} \in I^{\bullet + 1}} \Gamma(\cap_{k = 0}^\bullet \widehat{X}_{i_k}, \Lt^\wedge) \otimes_{\pwr} \laurent\right)\]
    is canonically isomorphic to $\mathcal{O}(\adictorus)$.  
  \end{proposition}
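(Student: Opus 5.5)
The plan is to compute the $H^0$ directly as the kernel of the first \v{C}ech differential. By definition it is the module of compatible families $(s_i)_{i \in I}$ with $s_i \in \Gamma(\widehat{X}_i, \Lt^\wedge) \otimes_{\pwr} \laurent$, where $i$ ranges over the maximal cones $\widetilde{\sigma}_{y,\pi}$, subject to $s_i = s_j$ on $\widehat{X}_{i} \cap \widehat{X}_j$ (also tensored with $\laurent$). The product over $I$ is taken \emph{outside} the tensor product. The goal is to build an injective map to $\mathcal{O}(\adictorus)$ and show it is surjective.

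\textbf{Step 1: describe each factor.} Each basic affine $X_{\widetilde{\sigma}}$ is $\Spec \BZ[\widetilde{\sigma}^\vee \cap (\BZ \oplus X)]$. On it, $\Lt$ is trivialized by the monomial $q^{m_0/w}\underline{Z}^{m}$ giving $\psi|_{\widetilde{\sigma}}$. Hence
\[\Gamma(\widehat{X}_{\widetilde{\sigma}}, \Lt^\wedge) \otimes \laurent\]
is the module of formal sums $\sum_\chi c_\chi \underline{Z}^\chi$ with $c_\chi \in \laurent$ satisfying two conditions. First, there is a uniform shift $N$ such that $v_q(c_\chi) \geq -N + \min_{t \in \sigma} (\langle t, \chi\rangle + \text{affine part of }\psi)$; this bounds the denominators, and works because we tensor a single factor. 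Second, $v_q(c_\chi) - \min_{t\in\sigma}\langle t,\chi\rangle \to \infty$. Concretely, this is the ring of the affinoid polyannulus $\{ -v_q(\underline{Z}) \in \sigma\}$, up to the linear twist by the trivializing monomial.

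All intersections contain the torus $\Gt$ after inverting $q$, since the cones all share the ray $(1,\underline{0})$ direction generically. Therefore the restriction maps are injective inclusions of formal Laurent series modules inside $\laurent[[\underline{Z}^{\pm 1}]]$ (formal, two-sided). It follows that a compatible family is a single formal series $f = \sum_\chi a_\chi \underline{Z}^\chi$ lying in every factor, after untwisting each $s_i$ by its trivializing monomial. I would first untwist globally using the $\widetilde{G}$-equivariant trivialization of $\Lt|_{\Gt}$, because $\psi(1,0)=0$ makes $\Lt|_{\Gt} = \mathcal{O}$ canonically. The canonical map then sends $(s_i)$ to the common Fourier expansion $f$.

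\textbf{Step 2: the image lies in $\mathcal{O}(\adictorus)$.} The key observation is that the sets $\sigma_{y,\pi}$ cover $Y \otimes \BR$, each $\sigma_{y,\pi}$ is bounded, and $\psi$ is bounded affinely on each. Fix $x \in \Hom(\BZ^g,\BR) \simeq Y\otimes \BR$, pick a simplex containing $x$, and use the convergence condition in that factor. This gives $v_q(a_\chi) + \langle x,\chi\rangle \to \infty$ (up to a sign convention matching \Cref{veryconvergent}), since $\min_{t\in\sigma}\langle t,\chi\rangle \leq \langle x,\chi\rangle$. The shift $N_i$ depends on $i$, but for a fixed $x$ only one factor is used, so this is harmless.

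\textbf{Step 3: surjectivity.} Conversely, let $f$ be very convergent and fix a simplex $\sigma$ with vertices $v_0,\dots,v_g$. Applying the defining condition at each vertex $x = v_k$, we get $v_q(a_\chi) + \min_k \langle v_k,\chi\rangle \to \infty$. The minimum of a linear form over a simplex is attained at a vertex, so the convergence condition holds on $\sigma$. Only finitely many terms violate any given bound, which yields a uniform lower bound on $v_q(a_\chi) + \min_\sigma\langle\cdot,\chi\rangle$, i.e.\ bounded denominators. After multiplying by a power of $q^{1/w}$, $f$ therefore lands in $\Gamma(\widehat{X}_\sigma,\widehat{\mathcal{O}})$; I would check that the twist by the trivializing monomial of $\psi|_\sigma$ changes this only by a shift. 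The family of restrictions is tautologically compatible.

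The main obstacle is Step 1: correctly identifying the $q$-adic completion of $\BZ[\widetilde{\sigma}^\vee\cap(\BZ\oplus X)]$, tensored with $\laurent$, as exactly the bounded-denominator, convergent series over the polytope $\sigma$. This is a toric computation. Elements of $\widetilde{\sigma}^\vee$ are $(m_0,\chi)$ with $m_0 + \langle t,\chi\rangle \geq 0$ on $\sigma$ (with $q^{1/w}$ scalings), and completing means allowing infinite sums of such monomials with exponents $m_0$ tending to infinity relative to the minimum. I would carry this out carefully, including the factor $w$ in the action and the normalization $v_q$ versus $v_{q^{1/w}}$. Canonicity follows because every identification is the restriction to the dense torus $\Gt_{\laurent}$.
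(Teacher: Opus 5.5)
Your proposal is correct and follows essentially the same route as the paper. Each factor is identified with series convergent on the polyannulus attached to $\sigma$, the monomial trivializations of $\Lt$ do not affect convergence, and the tiling of $Y\otimes\BR$ by the simplices turns the compatibility conditions into very-convergence. You are in fact more careful than the paper about the per-factor bounded denominators and about the converse direction via the vertices of each simplex.

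Two small slips to fix when you write it up:
\begin{enumerate}
\item In Step 1 the convergence condition should read $v_q(c_\chi)+\min_{t\in\sigma}\langle t,\chi\rangle\to\infty$, which is the version you actually use in Steps 2 and 3.
\item The formal overlaps $\widehat{X}_i\cap\widehat{X}_j$ are empty when the two cones meet only in $\{0\}$, because the completion of $\Gt_{\laurent}$ is empty. So the single common Fourier expansion comes from injectivity of the restriction maps for simplices that meet, together with connectedness of the tiling, not from a common torus.
\end{enumerate}
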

  \begin{proof}
    Let $i = (y, \pi) \in I$ and let the cone $\widetilde{\sigma} \in \Sigma$ be the cone over  $\sigma_i \subset Y \otimes \BR$. The coordinate ring of the affine patch $X_i \times_{\BZ[q^{1/w}]} \pwr \subset \Pt$ is $\pwr[\widetilde{\sigma}^\vee \cap (\BZ \oplus X)]$, so
    \[\Gamma(\widehat{X}_{i}, \widehat{\mathcal{O}}) = \Gamma(X_i, \mathcal{O})^\wedge\]
    is just the set of formal power series $f(\underline{Z}) = \sum a_\chi \underline{Z}^\chi$ with $a_\chi \in \laurent$ with the property that $f$ converges $q$-adically for all input $\underline{Z}$ such that the vector $(v_q(Z_i))_{i = 1}^g$ is in $\sigma$. Being in $H^0$ of this \v{C}ech complex therefore means (after multiplying by transition functions, which are all just monomials and therefore do not affect the convergence properties at all) that $f$ converges $q$-adically regardless of the vector $(v_q(Z_i))_i$, since the $\sigma$'s tile $Y \otimes \BR$. This is the same as the assertion $f \in \mathcal{O}(\adictorus)$ --- see \Cref{veryconvergent}. 
  \end{proof}
  \begin{proposition}\label{Hq}

    \[H^p\left(\prod_{\overline{i} \in I^{\bullet + 1}} \Gamma(\cap_{k = 0}^\bullet \widehat{X}_{i_k}, \Lt^\wedge) \otimes_{\pwr} \laurent\right) = 0\]
    for all $p \geq 1$.
  \end{proposition}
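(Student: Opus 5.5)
The plan is to identify the complex in \Cref{Hq} with a \v{C}ech complex computing the cohomology of the structure sheaf on $\adictorus$, and then invoke Kedlaya--Liu's Cartan Theorem B \cite[Theorem 2.6.5]{KL2}, as announced in \Cref{ideas}.

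\textbf{Step 1: the terms are affinoid algebras.} Fix $\underline{i} \in I^{q+1}$ with nonempty intersection. The cone $\widetilde{\tau} = \cap_k \widetilde{\sigma}_{i_k}$ is the cone over a face $\tau$ of the triangulation of $Y \otimes \BR$. Since $\Lt$ is trivialized on $X_{\widetilde\tau}$ by a monomial, and monomials do not change convergence behaviour, it suffices to treat $\widehat{\mathcal{O}}$. I will show that
\[\Gamma(\widehat{X}_{\widetilde\tau}, \widehat{\mathcal{O}}) \otimes_{\pwr} \laurent = \pwr[\widetilde\tau^\vee \cap (\BZ \oplus X)]^\wedge[q^{-1/w}]\]
is the ring of global functions on the rational subset
\[U_\tau := \{\underline{Z} : (v_q(Z_j))_j \in \tau\} \subset \adictorus.\]
This is a polyhedral (``tropical'') affinoid domain, and its algebra is the Laurent series $\sum a_\chi \underline{Z}^\chi$ with $v_q(a_\chi) + \langle x, \chi\rangle \to \infty$ uniformly for $x \in \tau$. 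The inclusion of the $I$-adic completion after inverting $q^{1/w}$ into this ring is clear. For the reverse inclusion, note that $\tau$ is a bounded lattice polytope. Given such a series, a single power $q^{N/w}$ brings all coefficients into $\pwr[\widetilde\tau^\vee \cap (\BZ\oplus X)]^\wedge$, because the required inequality only needs to be checked at the finitely many vertices of $\tau$. This is where boundedness of each cone, and hence finiteness of each term before the product, matters.

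\textbf{Step 2: the complex is a \v{C}ech complex.} The subsets $U_\sigma$ for the maximal simplices $\sigma_{y,\pi}$ form a cover of $\adictorus$ by affinoids whose finite intersections are the $U_\tau$. The complex in \Cref{Hq} is then $\prod_{\underline{i}} \mathcal{O}(U_{\tau_{\underline i}})$, where the tensor with $\laurent$ sits inside the product, as the paper stresses. This is exactly the \v{C}ech complex $\check{C}^\bullet(\{U_\sigma\}, \mathcal{O}_{\adictorus})$. The cover is locally finite, since each $U_\sigma$ meets only finitely many others, so this is a legitimate open cover of the (non-quasicompact) adic space.

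\textbf{Step 3: acyclicity.} By Tate/Huber acyclicity for rational subsets of affinoids, each finite intersection $U_\tau$ has vanishing higher coherent cohomology. Hence the \v{C}ech complex computes $H^\bullet(\adictorus, \mathcal{O})$, and Kedlaya--Liu \cite[Theorem 2.6.5]{KL2} gives its vanishing in positive degree. One must check that their hypotheses apply: $\adictorus$ is an increasing union of the affinoid polyannuli $\{|v_q(Z_j)| \le n\}$ with dense transition maps, i.e.\ a quasi-Stein space.

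\textbf{Main obstacle.} I expect the hard step to be applying Kedlaya--Liu cleanly. If their framework does not literally cover this base, I would instead prove the vanishing by hand. The \v{C}ech complex of the exhaustion $W_n = \{|v_q(Z_j)| \le n\}$ is acyclic on each $W_n$ by Tate acyclicity for the finite subcover. A Mittag-Leffler argument then handles the passage to the limit, since $\varprojlim$ of the finite \v{C}ech complexes equals the infinite product complex. The required $\varprojlim^1$ vanishing follows from density of the restrictions $\mathcal{O}(W_{n+1}) \to \mathcal{O}(W_n)$ (Laurent polynomials are dense), exactly as in Kiehl's Theorem B for quasi-Stein spaces.
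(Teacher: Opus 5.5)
Your proposal is correct and follows the paper's route. You identify the complex, with the tensor product taken inside the product, as the \v{C}ech complex of the polyhedral affinoid cover of $\adictorus$ coming from the triangulation, note that the cover is acyclic, and conclude with Kedlaya--Liu's Cartan Theorem B; your explicit description of the affinoid algebras in Step 1 and your fallback argument (exhaustion by the $W_n$ plus a Mittag-Leffler argument with dense transition maps) spell out what the paper only asserts (``acyclic for the usual reasons'') or alludes to (copying the proof of \cite{KL2}).
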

  \begin{proof}
    At this stage it is convenient to outsource the bulk of the proof to the adic space machinery. The idea is that we need a language to study the ``generic fiber'' of the formal $\Spf(\pwr)$-scheme $\Pt^\wedge$, which is exactly what the adic space $\adictorus$ is. Using Huber's functor $t$ from locally Noetherian formal schemes to adic spaces \cite{hubergen, huberbook, regensburg}, we can construct the adic space $t(\Pt^\wedge)$. It is constructed by patching together affinoids constructed from affines in $\Pt^\wedge$. More specifically, from the affine
    \[\mathrm{Spf} (\pwr[\widetilde{\sigma}^\vee \cap X]^\wedge) \subset \Pt^\wedge\]
    one gets the affinoid
    \[\mathrm{Spa}(\pwr[\widetilde{\sigma}^\vee \cap X]^\wedge, \pwr[\widetilde{\sigma}^\vee \cap X]^\wedge)\]
    whose analytic points (the points corresponding to valuations that are nonzero on an ideal of definition) are 
    \[t(\mathrm{Spf} (\pwr[\widetilde{\sigma}^\vee \cap X]^\wedge))_a = \mathrm{Spa}(\pwr [\widetilde{\sigma}^\vee \cap X]^\wedge \otimes_{\pwr} \laurent, \pwr[\widetilde{\sigma}^\vee \cap X]^\wedge).\]
    Gluing all of these together to get $t(\Pt^\wedge)_a$ yields a copy of $\adictorus$ covered by affinoids defined by inequalities corresponding to the inequalities defining the various polyhedra $\sigma \subset Y \otimes \BR$, and the complex whose cohomology we want to compute is manifestly the \v{C}ech complex with respect to this cover for a line bundle on this analytic adic space. This cover being acyclic for the usual reasons, we are done because \emph{any} line bundle on $\adictorus$ has trivial higher cohomology, by Cartan's Theorem B for quasi-Stein adic spaces \cite[Theorem 2.6.5]{KL2}.  
  \end{proof}

  \section{Equivariant version}\label{equivariant}
  Let $\Gamma$ be a subgroup of $\GL(Y)$ that preserves the quadratic form $Q$ defining the line bundle $\mathscr{L}$.  The abelian scheme $E_{w, \Tate}^g$ admits an action of the abstract group $\Gamma$ as a $\laurent$-scheme. The action of $\gamma = (\gamma_{ij}) \in \Gamma$ is just defined by
  \[(z_1, \ldots, z_g) \mapsto \left(\prod_{j = 1}^g z_j^{\gamma_{ij}}\right)_{i = 1}^g.\] We now restrict to a very specific case that is chosen so that $\LL$ has a natural $\Gamma$-equivariant structure. We retain the arbitrary quadratic form $Q$ determined by the arbitrary $g \times g$ symmetric $\BZ$-entried matrix $B = (b_{ij})$. However, we now impose a very harsh restriction on $a = (a_1, \ldots, a_g)\in \Hom(Y, \laurent^\times)$:
  \begin{definition}
    From now on, $\LL^{Q, \underline{a}}$ is taken to be one of the two to four symmetric rigidified line bundles
    \[(\LL^{x^2, a}) \otimes Q, a \in \{\pm 1, \pm q^{1/2}\} \cap \laurent,\]
    that is,
    \[\left(\bigotimes_{i = 1}^g \pi_i^*\LL^a\right)^{b_{ii}} \otimes \bigotimes_{i < j} \left(m_{ij}^* \LL^a \otimes \pi_i^*(\LL^a)^{-1} \otimes \pi_j^*(\LL^a)^{-1}\right)^{b_{ij}}\]
    where $\LL^a = \mathcal{O}([-a^{-1}])$ with the implicit rigidification (and normalized symmetric structure) and $[-a^{-1}]$ is the relative Cartier divisor in $E_{w, \Tate}$ defined via \cite[Ch. VII, \S1.2, (d)]{DR} as in \Cref{intro2}.  
  \end{definition}

  \begin{definition}
    The line bundle $\LL^{Q, a}$ admits a $\Gamma$-equivariant structure as a consequence of the theorem of the cube, which ensures that for all $\gamma \in \Gamma$, there \emph{exists} an isomorphism
    \[\gamma^*(\LL \otimes Q) \simeq \LL \otimes (\gamma^\top Q \gamma) = \LL \otimes Q.\]
    We fix the $\Gamma$-equivariant structure coming from the isomorphisms that are rigidified, i.e. that are isomorphisms of cubical line bundles. 
  \end{definition}
  
  It will be useful for our proofs to have access to a toric model for the $\Gamma$-action, that is, a map of $\pwr$-schemes $\widetilde{\gamma}: \Pt_1 \to \Pt_2$ between relatively complete models for the Kuga--Sato degeneration $G_w^g$ such that $M(\widetilde{\gamma}) = \gamma$. We will need also a model that uniformizes the $\Gamma$-equivariant structure on $\LL$ using $M_{IS}$. 
\subsection{Relatively complete model for $\Gamma$ acting on the scheme $E_{w, \Tate}^g$}
It is a minor inconvenience that we cannot use the relatively complete model $\Pt$ for $E_{w, \Tate}^g$ from before, because a putative toric model $\gamma: \Pt \to \Pt$ defined by a map $Y \otimes \BR \to Y \otimes \BR$ would have to be induced by $\gamma: Y \to Y$. But the required map on $(\BZ \oplus Y) \otimes \BR$ does \emph{not} necessarily preserve the fan $\Sigma$.

  One way to handle this is to define two \emph{different} relatively complete models $\Pt_1$ and $\Pt_2$ for $E_{w, \Tate}^g$. If all we want to do is uniformize the map $\gamma: E_{w, \Tate}^g \to E_{w, \Tate}^g$, then we may take $\Pt_2$ to be the same as $\Pt$ as defined in \Cref{KSDD}, and, in order to make sure that we can define a toric model for $\gamma$, we define $\Pt_1$ from the fan \[\gamma^{-1}\Sigma = \text{the cone in } (\BZ + Y) \otimes \BR \text{ over } \gamma^{-1}(\text{the triangulation used to define } \Sigma).\]
  (on the left hand side one can interpret $\gamma$ as acting on $(\BZ \oplus Y) \otimes \BR$ by the linear map $(1, \gamma)$). An example of the two resulting triangulations of $Y \otimes \BR$ is pictured in \Cref{triangulation-diag}.
  
  \begin{figure}
    \caption{The triangulations of $Y \otimes \BR$ inducing the fans that define $\Pt_1$ and $\Pt_2$ for a certain $\gamma$.}\label{triangulation-diag}
  \begin{center}
\begin{tikzpicture}[
  edge/.style={line width=.45pt, draw=blue!65!black},
  vertex/.style={circle, fill=black, inner sep=1.1pt}
]

\begin{scope}[transparency group, scope fading=soft edge]
  \clip (-3.55,-3.55) rectangle (3.55,3.55);
  \foreach \i in {-4,...,3} {
    \foreach \j in {-4,...,3} {
      \draw[edge] (\i,\j) -- ({\i+1},\j);
      \draw[edge] (\i,\j) -- (\i,{\j+1});
      \draw[edge] (\i,\j) -- ({\i+1},{\j+1});
    }
  }
  \draw[edge] (-4,4)--(4,4);
  \draw[edge] (4,-4)--(4,4);
  \foreach \i in {-4,...,4}
    \foreach \j in {-4,...,4}
      \node[vertex] at (\i,\j) {};
\end{scope}
\node at (0,-4.05)
  {The chosen triangulation of $\BR^{2}$};

\begin{scope}[xshift=8cm, transparency group, scope fading=soft edge]
  \clip (-3.55,-3.55) rectangle (3.55,3.55);
  \foreach \i in {-8,...,8} {
    \foreach \j in {-8,...,8} {
      \draw[edge] (\i,\j) -- ({\i+1},{\j-1});
      \draw[edge] (\i,\j) -- ({\i-1},{\j+2});
      \draw[edge] (\i,\j) -- (\i,{\j+1});
      \node[vertex] at (\i,\j) {};
    }
  }
\end{scope}
\node[align=center] at (8,-4.15)
  {Example of the image under $\gamma^{-1}$}; 

\end{tikzpicture}
  \end{center}
  \end{figure}
  Another option slightly more adapted to our homological computations is to replace $\gamma^{-1}\Sigma$ with a common refinement of the two fans defining $\Pt_1$ and $\Pt_2$ --- the proof of \Cref{model-works} ensures that the resulting fan $\Sigma_{\gamma}$ provides a relatively complete model $\Pt_\gamma$ for $E_{w, \Tate}^g$, and $\gamma: E_{w, \Tate}^g \to E_{w, \Tate}^g$ has a model as the toric map $\gamma: \Pt_\gamma \to \Pt_\Sigma$, which fits into the diagram
  \begin{center}
\begin{tikzcd}
                                             & \Pt_{\gamma} \arrow[ld, "\mathrm{id}"] \arrow[rd, "\gamma"] &              \\
\Pt_{\gamma^{-1}\Sigma} \arrow[rr, "\gamma"] &                                                             & \Pt_{\Sigma}
\end{tikzcd}
  \end{center}
  An example of this refinement (restricted to $\{1\} \times Y$) in the same context as \Cref{triangulation-diag} is pictured in \Cref{refinement-diag}.

  \begin{figure}
    \caption{An example of the polyhedral decomposition of $Y \otimes \BR \simeq \BR^2$ used to define $\Pt_\gamma$ in the case of \Cref{triangulation-diag}}\label{refinement-diag}  
    \begin{center}
\begin{tikzpicture}[
  edge/.style={line width=.42pt, draw=blue!65!black},
  old vertex/.style={circle, fill=black, inner sep=1.15pt},
  new vertex/.style={circle, fill=blue!65!black, inner sep=.8pt}
]

\begin{scope}[transparency group, scope fading=soft edge]
  \clip (-3.6,-3.6) rectangle (3.6,3.6);

  \foreach \k in {-8,...,8} {
    \draw[edge] (\k,-8) -- (\k,8);                 
    \draw[edge] (-8,\k) -- (8,\k);                 
    \draw[edge] (-8,{-8+\k}) -- (8,{8+\k});        
    \draw[edge] (-8,{8+\k}) -- (8,{-8+\k});        
    \draw[edge] (-8,{16+\k}) -- (8,{-16+\k});      
  }

  \foreach \i in {-4,...,4}
    \foreach \j in {-4,...,4}
      \node[old vertex] at (\i,\j) {};

  \foreach \i in {-5,...,4} {
    \foreach \j in {-5,...,4} {
      \node[new vertex] at ({\i+1/2},\j) {};
      \node[new vertex] at ({\i+1/3},{\j+1/3}) {};
      \node[new vertex] at ({\i+1/2},{\j+1/2}) {};
      \node[new vertex] at ({\i+2/3},{\j+2/3}) {};
    }
  }
\end{scope} 
\end{tikzpicture}
    \end{center}
  \end{figure}

  \subsection{Relatively complete model for the $\Gamma$-equivariant structure on $\LL$.}\label{gammamodel}

  For all $\gamma \in \Gamma$, we have a unique rigidified isomorphism $\gamma^* \LL \simeq \LL$. That is to say, we have an isomorphism \[(E_{w, \Tate}^g, \gamma^*\LL)  \simeq (E_{w, \Tate}^g, \LL) \text{ in the category } \DEG_{\ISgg}^{\spli}.\]
  It will be useful for the proof of the $\Gamma$-equivariant Fourier expansion principle to have a relatively complete model for that isomorphism, that is, for a relatively complete model for $D = (G, i: Y \to \Gt, \Lt)$ in $\DD_{\ISgg}^{\spli}$ such that $(E_{w, \Tate}^g, \LL) = M_{IS}(D)$ in an explicit way, and an isomorphism $\widetilde{\gamma}^*D \simeq D$ which induces the $\gamma$ -action on $\LL$ under $M_{IS}$. 

  Fix $\gamma \in \Gamma$, and use the relatively complete model $\Pt_\gamma$ adapted to $\gamma$ for $(\Gt, i)$ as before. After extending to $\DD_{\ISgg}^{\spli}$, we already constructed the object
  \[(\Gt, i, \Lt) \in \DD^{\spli}_{\ISgg}, \qquad M_{IS}(\Gt, i, \Lt) = (E_{w, \Tate}^g, \LL).\]
  This provides another point of view on the fact that $\LL$ admits a $\Gamma$-equivariant structure in the first place: recall that the $Y$-equivariant structure on $\Lt_K$ is given by the cocycle
  \[y \mapsto q^{Q'(y)}a^{b_{11}y_1 + \cdots + b_{gg}y_g}\underline{Z}^{B(y)},\]
  where whether a given $a \in \{\pm 1, \pm q^{1/2}\}$ is allowed in the first place depends on whether the required powers of $q$ are in $\frac{1}{w}\BZ$. Let $b = (b_{11}, \ldots, b_{gg})$. 
  
  Then (as a consequence of the fact that $\gamma$ preserves $Q$ and relatedly that $\gamma^\top B \gamma = B$) the pullback $\gamma^*\Lt$ is defined by a very similar cocycle
  \[y \mapsto q^{Q'(y)}q^{(b^\top y - b^\top \gamma y)/2}a^{b^\top \gamma y}\underline{Z}^{B(y)}.\]
  In particular, there is a rigidified isomorphism $f_\gamma: \gamma^*\Lt \to \Lt$ given by
  \begin{equation}\label{gammaequivstr}[\gamma^*\Lt \to \Lt] = \text{mult by }\begin{cases}
    1 \text{ if } a = \pm q^{1/2}\\
    \underline{Z}^{(b - \gamma^\top b)/2} \text{ if } a = \pm 1
  \end{cases}\end{equation}
  which is well-defined because $\gamma^\top b - b \in 2 \BZ^g$.

  We now construct a relatively complete model for the morphism $f_\gamma$ in $\DD_{\ISgg}^{\spli}$. Take the relatively complete model $\Pt_\gamma$ for $E_{w, \Tate}^g$. It admits an ample line bundle $\widetilde{\mathscr{N}}$ obtained by tensoring the pullbacks of the ample line bundles on $\Pt_1$ and $\Pt_2$ which we know exist by \Cref{ample-exists}. Since the fan $\Sigma_\gamma$ defining $\Pt_\gamma$ is a refinement of $\Sigma$, there is no problem defining the piecewise-linear function $\psi$ on $|\Sigma_\gamma|$ which we previously used to define the relatively complete models $\Lt$. The resulting line bundle (abusively called $\Lt$) still admits a $Y$-equivariant structure given by the same formulas, which makes it part of a relatively complete model for $\LL$, $(\Pt_\gamma, \Lt, \Nt)$. Recall that $\psi$ was defined to be piecewise-linear on $|\Sigma|$ (and hence on $\Sigma_\gamma$), and to satisfy
  \[\psi(1, w\underline{y}) = -wQ'(\uy).\]
  Then $\gamma^*\Lt$ is defined by $\gamma^*\psi$, which is given by
  \[\gamma^*\psi(1, w \underline{y}) = \psi(1, w \gamma \underline{y}) = -wQ'(\gamma \uy) = -wQ'(\uy) + w\uy^\top (\gamma^\top b -b)/2\]
  and it has a $Y$-equivariant structure that makes it a relatively complete model for $\gamma^*\Lt$.
  The same calculation as above with the $Y$-equivariant structures implicit in the definitions (following \Cref{KSDD}) shows that we have an isomorphism $\gamma^*\Lt \simeq \Lt$ given by the same formula \Cref{gammaequivstr}. The functor $M_{IS}$ sends that map to the $\gamma$-action on $\LL$ as an object of $\DEG_{\ISgg}^{\spli}$. 

  \subsection{Relatively complete model for the identity section}\label{rcm-identity}
  The following is implicit in the construction of $M(D)$. The identity section $e: \Spec \laurent \to E_{w, \Tate}^g$ is the base-change of $e: \Spec \pwr \to X_{w, g}$ coming from algebrizing the formal completion of the identity section of $\widetilde{G} \subset \Pt_{\ugamma}$ over $\Spec \pwr$. The group $\Gamma$ acts trivially on $e$, and the natural cover of $\Pt_\gamma$  restricts via $e$ to a cover of $\Spec \pwr$ which is just a finite number of copies of the entire scheme $\Spec \pwr$ (one for every cone in $\Sigma_{\ugamma}$ touching the line through $(1, \underline{0})$), and countably many copies of $\Spec \laurent$.
  
\subsection{Calculation of the $\Gamma$-equivariant coherent cohomology and the evaluation map in equivariant cohomology}
  We cannot carry out the refinement procedure that we used to define relatively complete models for the $\gamma$-action on $\LL$ infinitely many times (that is, for all $\gamma \in \Gamma$)--- so in order to build a complex that calculates $H^i_\Gamma(E_{w, \Tate}^g, \LL)$, we need to use many different relatively complete models that individually allow one to build complexes computing the maps $\gamma: H^i(E_{w, \Tate}^g, \LL)$. That is, we will take advantage of the spectral sequence 
  \[H^p(\Gamma, H^q(E_{w, \Tate}^g, \LL)) \implies H^{p + q}_\Gamma(E_{w, \Tate}^g, \LL)\]
  to allow us to compute $H^{p + q}_\Gamma$ without having a single \v{C}ech complex that can compute the entire $\Gamma$-action on $H^\bullet(E_{w, \Tate}^g, \LL)$. To tell the truth: if all we wanted to do was prove the abstract isomorphism
  \[H^i(\Gamma \ltimes Y, \mathcal{O}(\adictorus)) \simeq H^i_\Gamma(E_{w, \Tate}^g, \LL),\]
  then we could plausibly do with a lot less. We could just bootstrap from the isomorphisms
  \[H^q(Y, \mathcal{O}(\adictorus)) \simeq H^q(E_{w, \Tate}^g, \LL)\]
  to deduce the $\Gamma$-equivariant Fourier expansion principle by means of the spectral sequences
  \[H^p(\Gamma, H^q(E_{w, \Tate}^g, \LL)) \implies H^{p + q}_\Gamma(E_{w, \Tate}^g, \LL)\]
  and 
  \[H^p(\Gamma, H^q(Y, \mathcal{O}(\adictorus))) \implies H^{p + q}(\Gamma \ltimes Y, \mathcal{O}(\adictorus)).\]
  which are identified (including the transition maps) by means of the fact that the isomorphism $H^q(Y, \mathcal{O}(\adictorus)) \simeq H^q(E_{w, \Tate}^g, \LL)$ is $\Gamma$-equivariant, which can be proved by looking at a single $\gamma \in \Gamma$ at a time and using the model $\Pt_\gamma$. But in the promised \Cref{mainthm-equiv}, we need to prove further information, about the compatibility with the procedure of evaluating at the zero section. Since this is not a statement that has any meaning\footnote{In fact in applications such as \cite{KV}, taking $\Gamma$-equivariant cohomology is done \emph{in order to} make sure that evaluation at the identity section gives something that is meaningful.} at the level of $H^q(E_{w, \Tate}, \LL)$, at least when $q > 0$, we must take apart the spectral sequence a little bit more in order to prove the full detail of the claim.

  By ``take it apart,'' we mean to go all the way back to a suitable $E_0$ page of the spectral sequence. For this, we require a double complex replacing
  \[C^p(Y, \laurent \otimes \text{Cech complex for } (\Pt, \Lt))\]
  which also takes into account the $\Gamma$-action. Since there is no single relatively complete model that is adapted to the action of every $\gamma \in \Gamma$, the complex $C^p(\Gamma \ltimes Y, \cdots)$ wouldn't make any sense. On the other hand, note that the group of $\Gamma$ group $p$-cochains is nothing but a product $\prod_{\underline{\gamma} \in \Gamma^{p + 1}} (\cdots)$ --- the inside can be adapted to the particular value of $\gamma$ so that the $\Gamma$-group cohomology differentials may be defined appropriately.

  As a result, we can consider the double complex
  \[K^{p, q} := \prod_{\underline{\gamma} \in \Gamma^{p}} \mathrm{Tot}^q(C^{\bullet}(Y, \check{C}^\bullet_{\BZ((q^{1/w}))}(\mathcal{U}_{\Sigma_{\underline{\gamma}}}, \Lt^\wedge)))\]
  where the fan $\Sigma_{\gamma_0, \ldots, \gamma_p}$ is defined recursively by being a common refinement of $\Sigma_{\gamma_1, \ldots, \gamma_p}$ and $\gamma_0^{-1}\Sigma_{\gamma_1, \ldots, \gamma_p}$, and the vertical maps are defined by the alternating sum of
  \[p_i^{*}: \mathrm{Tot}^q(C^{\bullet}(Y, \check{C}^\bullet_{\laurent}(\mathcal{U}^{\Sigma_{\underline{\gamma}^i}}, \Lt^\wedge))) \to \mathrm{Tot}^q(C^{\bullet}(Y, \check{C}^\bullet_{\laurent}(\mathcal{U}^{\Sigma_{\underline{\gamma}}}, \Lt^\wedge)))\]
  for $\underline{\gamma} = (\gamma_0, \ldots, \gamma_p) \in \Gamma^{p+1}$, and
  \[\underline{\gamma}^i = \begin{cases}(\gamma_1, \ldots, \gamma_p), \qquad \text{if } i = 0 \\  
    (\gamma_0, \ldots, \gamma_{i-1}\gamma_i, \ldots, \gamma_p), \qquad \text{if } 1 \leq i \leq p-1\\
(\gamma_0, \ldots, \gamma_{p-1}), \qquad \text{ if } i = p. 
  \end{cases}.\] Here $p_0^*$ is defined to be the pullback-by-$\gamma_0$ map (defined via the $\Gamma$-equivariant structure on $\widetilde{L}$), while $p_i^*$ for $i \geq 1$ is just the pullback-by-id map, which has an obvious meaning here because of the fact that the fan $\Sigma_{\underline{\gamma}^i}$ is a refinement of $\Sigma_{\underline{\gamma}}$. And $\check{C}^\bullet_{\laurent}(\cdots)$ is shorthand for a \v{C}ech complex where each individual group of local sections has been base changed to $\laurent$ (recall the ``unbounded denominators'' subtlety from \Cref{nonequiv} that required us to consider complexes like this). As we will see in the proof of \Cref{K-cpt}, this is relevant to the $\Gamma$-equivariant cohomology of $\LL$ because it is related to the simplicial scheme $E_{w, \Tate}^g \times \Gamma^\bullet$.

  The basic idea of our proof of \Cref{mainthm-equiv} is to produce a commutative diagram
  \begin{center}
\begin{tikzcd}
{H^\bullet_\Gamma(E_{w, \Tate}^g, \LL)} \arrow[d, "\mathrm{ev}_0"] & {K^{\bullet, \bullet}} \arrow[r, "\mathrm{computes}", dashed] \arrow[l, "\mathrm{computes}", dashed] \arrow[d, "e^*"]                                              & {H^p(\Gamma \ltimes \BZ^g, \mathcal{O})} \arrow[d, "\mathrm{res}"] \\
{H^\bullet(\Gamma, \laurent)}                             & {\prod_{\underline{\gamma} \in \Gamma^{p}} C^q(\Spec \laurent, \mathcal{O})} \arrow[l, "\mathrm{computes}", dashed] \arrow[r, "\mathrm{computes}", dashed] & {H^\bullet(\Gamma, \laurent)}
\end{tikzcd} 
  \end{center}
  where each dotted arrow is an explicit certificate (for us, a zig-zag of quasiisomorphisms) for why a certain complex computes a certain cohomology group. It is thanks to the explicitness of those certificates that we can certify that the evaluation map on cohomology is given by the explicit restriction formula on group cohomology under the $\Gamma$-equivariant Fourier expansion principle. 
  \begin{lemma}\label{K-cpt}
The $\Gamma$-equivariant cohomology $H_\Gamma^\bullet(E_{w, \Tate}^g, \mathscr{L})$,  is computed by the double complex $K^{\bullet, \bullet}$. 
  \end{lemma}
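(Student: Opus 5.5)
We will identify $H^\bullet_\Gamma(E_{w, \Tate}^g, \LL)$ with the cohomology of the simplicial object $E_{w, \Tate}^g \times \Gamma^\bullet$ (the bar construction of the $\Gamma$-action), and show that $K^{\bullet, \bullet}$ is a column-by-column model for the cosimplicial complex $R\Gamma(E_{w, \Tate}^g \times \Gamma^\bullet, \LL)$. Concretely, $\Gamma$-equivariant cohomology of a $\Gamma$-equivariant quasicoherent sheaf on a scheme with an action of a discrete group is the cohomology of the total complex of the double complex $\prod_{\underline{\gamma} \in \Gamma^p} C^\bullet(E_{w, \Tate}^g, \LL)$. Here $C^\bullet$ is any functorial choice of complex computing $R\Gamma$, the horizontal differentials are the alternating sums of face maps, the $0$-th face uses the equivariant structure $\gamma_0^*\LL \simeq \LL$, and the others are the identity. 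This is the standard fact that $H^\bullet_\Gamma = R\Gamma(\Gamma, R\Gamma(E, \LL))$ computed via the bar resolution. The task is therefore to show that the column $\mathrm{Tot}^\bullet(C^\bullet(Y, \check{C}^\bullet_{\laurent}(\mathcal{U}_{\Sigma_{\underline{\gamma}}}, \Lt^\wedge)))$ computes $R\Gamma(E_{w, \Tate}^g, \LL)$ for each $\underline{\gamma}$, and that the horizontal maps $p_i^*$ induce the correct face maps on cohomology.

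For each fixed $\underline{\gamma}$, the fan $\Sigma_{\underline{\gamma}}$ is a common refinement of translates of $\Sigma$. By the remark after \Cref{model-works}, together with the construction in \Cref{gammamodel}, it gives a relatively complete model $(\Pt_{\Sigma_{\underline{\gamma}}}, \Lt, \Nt)$ for $D_{B, a}$. The argument of \Cref{ss1}, \Cref{qt-cech}, and the base change paragraph after them then applies verbatim. The only inputs are that $Y$ acts freely on the cones, that the fundamental sets $\mathcal{F}_q$ are finite, and that the étale cover is acyclic; all three hold for any $Y$-periodic smooth refinement, and smoothness can be arranged or is unnecessary since $\psi$ is defined on $\Sigma$. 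This shows that each column computes $H^\bullet(E_{w, \Tate}^g, \LL)$, through the explicit zig-zag of \Cref{ss1}: Shapiro's lemma, then étale Čech cohomology of $P_\ell$, then the inverse limit, then $\otimes \laurent$.

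The main obstacle is compatibility. We must show that the refinement maps $p_i^*$ for $i \geq 1$, and the twisted map $p_0^*$, are compatible with these identifications, and moreover compatible at the level of complexes up to coherent homotopy, so that we actually obtain a map of double complexes into a genuine model of the cosimplicial $R\Gamma$.

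We would handle this by producing, for each $\underline{\gamma}$, a natural map from the column to a single functorial model. The natural choice is the Čech complex of $\LL$ on $P_{w,g}$, after formal completion and inversion of $q$, with respect to the cover pulled back from any refinement. Refinement of fans induces refinement of étale covers of each $P_\ell$, and the induced maps on Čech complexes commute with the quasi-isomorphisms of \Cref{qt-cech} because those isomorphisms are compatible with refinement. For $p_0^*$, we use that the toric map $\gamma_0 \colon \Pt_{\Sigma_{\underline{\gamma}}} \to \Pt_{\Sigma_{\underline{\gamma}^0}}$ together with $f_{\gamma_0}$ from \eqref{gammaequivstr} models $\gamma_0$ and its equivariant structure under $M_{IS}$. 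These comparisons assemble into a map of double complexes from $K^{\bullet,\bullet}$ to the bar double complex built from (direct limits of) Čech complexes of $\LL$ on $E_{w,\Tate}^g$, and this map is a quasi-isomorphism on each column.

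The standard column-wise spectral sequence comparison then gives the result, provided the convergence issues from infinite products over $\Gamma^p$ are harmless. They are, because the products are taken termwise and quasi-isomorphisms are preserved under arbitrary products of complexes of modules.
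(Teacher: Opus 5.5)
There is a genuine gap in the step you yourself flag as the main obstacle. Your overall structure matches the paper's: a bar/simplicial double complex with a functorial resolution in each column, each column of $K^{\bullet,\bullet}$ computing $H^\bullet(E_{w,\Tate}^g,\LL)$ via Shapiro's lemma, and a column-wise comparison. But the functorial target you propose does not exist as described. That target is a direct limit of \v{C}ech complexes of $\LL$ on $P_{w,g}$ over covers coming from refinements, and it fails because refining a fan does not refine an \'etale cover of a fixed $P_\ell$.

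Passing from $\Sigma_{\underline\gamma^i}$ to its refinement $\Sigma_{\underline\gamma}$ produces a different toric scheme. The toric morphism $\Pt_{\Sigma_{\underline\gamma}}\to\Pt_{\Sigma_{\underline\gamma^i}}$ is proper birational, an isomorphism only where $q^{1/w}$ is inverted, and a blow-up modulo every $I^\ell$. For instance, with $g=1$, subdividing $[n,n+1]$ at $n+\tfrac12$ replaces the chart $\Spec\BZ[x,y]$, $xy=q$, by $\Spec\BZ[u,v]$, $u^2v=q$, mapping via $(x,y)=(u,uv)$. Consequently:
\begin{itemize}
\item $P_\ell$, $\mathfrak P_{\underline\gamma}$ and their algebrizations all depend on $\underline\gamma$.
\item The charts of the finer model are not opens of the coarser one.
\item The instances of \Cref{qt-cech} for different $\underline\gamma$ compute cohomology of different schemes, so there is no refinement map of covers to commute with anything.
\end{itemize}
Nor can the formal-chart \v{C}ech complexes, even after $\otimes\laurent$, map to \v{C}ech complexes of Zariski covers of the algebraic $E_{w,\Tate}^g$. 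Crossing from formal to algebraic requires formal GAGA, and you give no chain-level, face-map-compatible version of that crossing. Also, the Shapiro comparison is the inclusion of the $Y$-invariant complex $\prod_{\mathcal F_q}(\cdots)\otimes\laurent$ \emph{into} a column of $K^{\bullet,\bullet}$. The arrows should therefore point into $K^{\bullet,\bullet}$, not out of it as you assert.

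The missing idea is to use a resolution that is functorial for arbitrary morphisms of (formal) schemes, not merely for refinements of covers. The paper takes levelwise Godement resolutions on $E_{w,\Tate}^g\times\Gamma^\bullet$ as the definition of $H^\bullet_\Gamma$. It then uses the zig-zag (products over $\underline\gamma$ understood)
\[
\begin{aligned}
&\laurent\otimes C^\bullet_{\gd,\LL}\longrightarrow\laurent\otimes C^\bullet_{\gd,\LL^\wedge}\longrightarrow\laurent\otimes\Tot^\bullet\check C^\bullet(\mathfrak P_{\underline\gamma},\mathscr G^\bullet_{\underline\gamma})\\
&\qquad\longleftarrow\laurent\otimes\check C^\bullet(\mathfrak P_{\underline\gamma},\Lt^\wedge)\longrightarrow K^{\bullet,\bullet}.
\end{aligned}
\]
\begin{itemize}
\item The first arrow is pullback along formal completion, a quasi-isomorphism by formal GAGA.
\item The middle two arrows are the standard \v{C}ech--Godement comparison on $\mathfrak P_{\underline\gamma}$. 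This comparison is natural for morphisms carrying charts into charts, which is exactly what the blow-downs $\Pt_{\Sigma_{\underline\gamma}}\to\Pt_{\Sigma_{\underline\gamma^i}}$ and the toric models of $\gamma_0$ (together with $f_{\gamma_0}$ from \eqref{gammaequivstr}) do.
\item The last arrow is the Shapiro inclusion.
\end{itemize}
Every arrow commutes with the maps $p_i^*$, and only then does the column-wise spectral sequence comparison you describe apply.
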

  \begin{proof}

    We consider the equivariant cohomology $H^\bullet_\Gamma(E_{w, \Tate}^g, \LL)$ to be computed by a double complex derived from the simplicial scheme
    \[X_n = E_{w, \Tate}^g \times \Gamma^{n},\]
    which is the functorially-defined Godement resolution at each level. The horizontal maps are just the Godement resolution maps, and the vertical maps are the ones that arise from functoriality and the $\Gamma$-action on $\LL$ (plus the specific definitions of the vertical maps in $X_\bullet$ given above). Then there is a zig-zag of maps of double complexes that induce isomorphisms on cohomology of total complexes:
\begin{center}
\begin{tikzcd}[
  column sep=1em,
  row sep=2em,
  cells={nodes={inner xsep=.25em,inner ysep=2pt}}
] 
\displaystyle
\prod_{\underline\gamma\in\Gamma^{\bullet}}
  \laurent\otimes C^\bullet_{\gd,\LL}
  \arrow[d,"\mathrm{GAGA}"']
  & & \\ 

\displaystyle
\prod_{\underline\gamma\in\Gamma^{\bullet}}
  \laurent\otimes C^\bullet_{\gd,\LL^\wedge}
  \arrow[r]
  & 

  \displaystyle
  \prod_{\underline\gamma\in\Gamma^{p}}
    \laurent\otimes\Tot^\bullet
  \check C^\bullet\!\left(
    \mathfrak{P}_{\underline{\gamma}},\,
    \mathscr G_{\underline\gamma}^\bullet
  \right)
  &

  \displaystyle\prod_{\underline\gamma\in\Gamma^{p}}
    \laurent\otimes
  \check C^n\!\left(
    \mathfrak{P}_{\underline{\gamma}},\,
    \Lt^\wedge
  \right)
  \arrow[l]
  \arrow[d]
  \\

  & &
\begin{gathered}
  \displaystyle
  \prod_{\underline\gamma\in\Gamma^{\bullet}}
    \Tot^\bullet C^\bullet\!\bigl(Y, \check C^\bullet_{\laurent}(\!
    \Pt^\wedge_{\underline{\gamma}}, \widehat{\LL})
  \bigr)
\end{gathered}
  \arrow[d,equal]
  \\

  & & K^{\bullet,\bullet}
\end{tikzcd}
\end{center}
In this combination of zig-zags:
\begin{itemize}
\item The first vertical map is, at each level, the map induced by looking at the stalks making up the Godement resolution inside their formal completion. Formal GAGA says that this induces an isomorphism on cohomology, and it obviously respects all the maps in the double complex.
\item The zig-zag in the middle is the standard one comparing Cech cohomology to the cohomology computed by the Godement resolution for $(\mathfrak{P}, \widehat{\LL})$--- we refer to \cite[Ch. II, \S 5.4]{godement}. A comparison between the $\Gamma$-action on the stalks and on our bespoke Cech complex shows that it respects the double complex maps. Technically, at this point one must be careful in the comparison between \'etale and Zariski topologies, but this can entirely be skipped by simply refining the fan $\Sigma$ until the affine toric cover of $\Pt_{\Sigma}$ is already a suitable Zariski cover. This kind of refinement is performed in \cite{bosch} for example. 
\item The final vertical map is the product over $\underline{\gamma}$ of the maps of complexes that are well-defined thanks to the same Shapiro isomorphism used in the proof of the Fourier expansion isomorphism. 
\end{itemize}
  
  \end{proof}

  Similarly:
  \begin{lemma}
    The double complex $K^{\bullet, \bullet}$ also computes the group cohomology $H^\bullet(\Gamma \ltimes Y, \mathcal{O}(\adictorus))$. 
  \end{lemma}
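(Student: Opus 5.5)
The plan is to run the other spectral sequence of $K^{\bullet,\bullet}$, the one that takes cohomology in the $q$-direction first, and to identify its $E_1$ page with a standard complex computing cohomology of the semidirect product. First fix $\underline{\gamma}\in\Gamma^p$. The column $\mathrm{Tot}^\bullet(C^\bullet(Y,\check C^\bullet_{\laurent}(\mathcal{U}_{\Sigma_{\underline\gamma}},\Lt^\wedge)))$ is exactly the total complex studied in \Cref{nonequiv}, with $\Sigma$ replaced by the refinement $\Sigma_{\underline\gamma}$. The proofs of \Cref{H0} and \Cref{Hq} only used two facts: the cones tile $Y\otimes\BR$, and the associated adic cover of $\adictorus$ is acyclic. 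Both facts survive refinement. So I will run the $Y$-direction spectral sequence inside each column. The inner \v{C}ech cohomology vanishes in positive degree and has $H^0=\mathcal{O}(\adictorus)$. Hence the column is quasi-isomorphic to $C^\bullet(Y,\mathcal{O}(\adictorus))$ with the action \Cref{finalaction}.

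Next I will check that these identifications do not depend on $\underline\gamma$ and are compatible with the vertical maps $p_i^*$. For $i\geq1$, $p_i^*$ is pullback along the identity of a refinement. On $H^0$ it restricts a very convergent series to a finer cover, so it is the identity on $\mathcal{O}(\adictorus)$. For $i=0$, $p_0^*$ is pullback by the toric model $\gamma_0:\Pt_{\underline\gamma}\to\Pt_{\underline\gamma^0}$ composed with $f_{\gamma_0}$ from \Cref{gammaequivstr}. On $H^0$ it is the substitution $f(\underline Z)\mapsto f(\gamma_0\cdot\underline Z)$ multiplied by the monomial in \Cref{gammaequivstr}. On $Y$-cochains it acts by $c\mapsto \gamma_0^*\circ c\circ\gamma_0^{-1}$. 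A short check using $\gamma^\top B\gamma=B$ and the formula for $\gamma^*\Lt$ in \Cref{gammamodel} should show that these formulas define an action of $\Gamma\ltimes Y$ on $\mathcal{O}(\adictorus)$ extending the $Y$-action. It should also show that the resulting $E_1$ double complex is
\[\prod_{\underline\gamma\in\Gamma^p}C^q(Y,\mathcal{O}(\adictorus)) = C^p(\Gamma,C^q(Y,\mathcal{O}(\adictorus))),\]
with the usual differentials.

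Finally I will invoke the standard fact that, for a semidirect product $\Gamma\ltimes Y$ and a $\Gamma\ltimes Y$-module $M$, the total complex of $C^p(\Gamma,C^q(Y,M))$ computes $H^\bullet(\Gamma\ltimes Y,M)$. This is the Hochschild--Serre double complex, obtained from the bar resolution of $\Gamma\ltimes Y$ as a tensor product of resolutions. The map from the total complex of $K$ to this total complex is a map of filtered complexes. It is a quasi-isomorphism on the columns, so it induces an isomorphism on total cohomology.

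Convergence is fine because both filtrations are first-quadrant. The main obstacle is the compatibility step for $p_0^*$. The column quasi-isomorphisms pass through Shapiro isomorphisms built from finite fundamental domains $\mathcal{F}_q$, which depend on $\Sigma_{\underline\gamma}$. I must check that these zig-zags commute with pullback by $\gamma_0$ up to canonical homotopy. I will try to avoid this by mapping each column directly to $C^\bullet(Y,\mathcal{O}(\adictorus))$ via the augmentation into $\check C^0$, since the \v{C}ech $H^0$ inclusion is canonical. That direct map is manifestly natural in refinements and in $\gamma_0$.
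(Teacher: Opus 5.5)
Your proposal is correct and is essentially the paper's argument: column by column, the \v{C}ech-first spectral sequence together with \Cref{H0} and \Cref{Hq} makes the augmentation $C^\bullet(Y,\mathcal{O}(\adictorus)) \to \mathrm{Tot}^\bullet C^\bullet(Y,\check C^\bullet_{\laurent}(\mathcal{U}_{\Sigma_{\underline\gamma}},\Lt^\wedge))$ a quasi-isomorphism. These maps assemble into a map of double complexes $C^\bullet(\Gamma, C^\bullet(Y,\mathcal{O}(\adictorus))) \to K^{\bullet,\bullet}$ that is a quasi-isomorphism on total complexes, and Hochschild--Serre finishes.

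Note that this comparison map must run into $K^{\bullet,\bullet}$, as in your last paragraph, and not out of it as your third paragraph says. Also, the Shapiro and fundamental-domain identifications play no role in this direction. So your augmentation-based handling of compatibility with $p_0^*$ and with the $\Gamma\ltimes Y$-module structure is exactly what is needed, and it is more explicit than what the paper writes.
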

  \begin{proof}
    We replicate the technique of the previous proof, starting on the bottom-right corner of the zig-zag diagram with $K^{\bullet, \bullet}$. Previously, we used the fact (from Shapiro's lemma) that the $E_1$-pages of the horizontal-first double complex spectral sequences of the complexes $C^\bullet(Y, \check C^\bullet_{\laurent}(\Pt^\wedge_{\ugamma}, \widehat{\LL}))$ (the plural being over each $\underline{\gamma} \in \Gamma^{\bullet}$) are concentrated in $p = 0$ where they are equal to $\laurent \otimes \check C^q(\mathfrak{P}_{\ugamma}, \widetilde{\LL}^\wedge)$. This implies that this complex maps quasiisomorphically into the total complex of the double complex as the $H^0$ of the horizontal differentials.

    We now copy this reasoning with the vertical-first spectral sequences for each $\underline{\gamma}$, from which we obtain the map of double complexes
    \[C^\bullet(\Gamma, C^\bullet(Y, \mathcal{O}(\adictorus))) = \prod_{\ugamma \in \Gamma^{\bullet + 1}} C^\bullet(\mathcal{O}(\adictorus)) \to K^{\bullet, \bullet}\]
    inducing a quasiisomorphism on total complexes. The double complex on the left calculates the cohomology of $\Gamma \ltimes Y$, by the Hochschild--Serre spectral sequence \cite[Ch. I]{HS}. 
  \end{proof}
  The proofs above show that $H^\bullet_\Gamma(E_{w, \Tate}^g, \mathscr{L}) \simeq H^\bullet(\Gamma \ltimes Y, \mathcal{O}(\adictorus))$ by explicitly identifying both with the cohomology of $K^{\bullet, \bullet}$ via explicit zig-zags. On the equivariant coherent cohomology side, that zig-zag starts with a functorial double complex derived from 
  \[E_{w, \Tate}^g \times \Gamma^{p+1} \overset{p_0, \ldots, p_p}{\to} E_{w, \Tate}^g \times \Gamma^{p}\]
  by choosing for row $p$ a functorial complex (in particular the global sections of the Godement resolution) computing in the $\underline{\gamma}$-coordinate the cohomology of $\mathscr{L}$, and where the vertical maps are the ones induced by that functorial complex. We used the \emph{definition} of the equivariant cohomology as being the cohomology of that double complex. We are interested in calculating the induced map on $\Gamma$-equivariant cohomology of pullback along the identity section $e: \Spec \laurent \to E_{w, \Tate}^g$. The target of that map is
  \[H^\bullet_{\Gamma}(\Spec \laurent, e^*\LL \simeq \mathcal{O}_{\Spec \laurent}) = H^\bullet(\Gamma, \laurent)\]
  where $\laurent$ is endowed with the trivial action of $\Gamma$, since $\Gamma$ preserves the rigidification. This cohomology is again naturally computed in two ways: (1) by the base change to $\laurent$ of the $q^{1/w}$-adic formal completion of the Cech complex on $\Spec \pwr$ for $e^*\Lt$, using the cover of $\Spec \pwr$ derived in \Cref{rcm-identity}. And (2), by the functorial double complex induced via levelwise Godement resolutions on the simplicial scheme $\Spec \laurent \times \Gamma^{\bullet}$. The method (2) is the ``official'' functorial definition that is the definition of how the pullback $e^*: H^\bullet_\Gamma(E_{w, \Tate}^g, \LL) \to H^\bullet(\Gamma, \laurent)$ is \emph{defined}: one just pulls back under the natural maps of functorial resolutions induced by the map of simplicial schemes
\begin{center}
\begin{tikzcd}
E_{w, \Tate}^g \times \Gamma^2 \arrow[d]                                      & (\Spec \laurent) \times \Gamma^2 \arrow[l] \arrow[d] \\
E_{w, \Tate}^g \times \Gamma \arrow[d] \arrow[d] \arrow[d] \arrow[d, no head] & (\Spec \laurent) \times \Gamma \arrow[l] \arrow[d]   \\
E_{w, \Tate}^g                                                                & \Spec \laurent \arrow[l]                            
\end{tikzcd}
  \end{center}
coming from $\Gamma^{\bullet}$ copies of the identity section at each level. One verifies that under the zig-zag that certifies the fact that $K^{\bullet, \bullet}$ computes $H^{\bullet}_{\Gamma}(E_{w, \Tate}^g, \LL)$, $e^*$ is computed by the pullback procedure from $K^{\bullet, \bullet}$ to the complex in (1). More specifically, there is a natural commutative diagram

  \begin{center}
\begin{tikzcd}[
  column sep=2em,
  row sep=2.1em,
  cells={nodes={inner xsep=.15em,inner ysep=2pt}}
]
\displaystyle
\prod_{\underline\gamma\in\Gamma^{\bullet}}
  \laurent \otimes C^\bullet_{\gd,\LL}
  \arrow[r,"e^*"]
  \arrow[d,"\text{formal GAGA}"']
&
\displaystyle
\prod_{\underline\gamma\in\Gamma^{\bullet}}
  \laurent \otimes C^\bullet_{\gd,e^*\LL}
  \arrow[d,"\text{formal GAGA}"]
\\
\displaystyle
\prod_{\underline\gamma\in\Gamma^{\bullet}}
  \laurent \otimes
  C^\bullet_{\gd,\LL^\wedge}
  \arrow[r,"\widehat e^*"]
  \arrow[d]
&
\displaystyle
\prod_{\underline\gamma\in\Gamma^{\bullet}}
  \laurent \otimes
  C^\bullet_{\gd,\widehat e^*\widehat{\LL}}
  \arrow[d]
\\
\displaystyle
\prod_{\underline\gamma\in\Gamma^{\bullet}}
  \laurent \otimes
  \Tot^\bullet\check C^\bullet\!\left(
    \mathfrak{P}_{\underline\gamma},\,
    \mathscr G^\bullet_{\ugamma}
  \right)
  \arrow[r,"\widehat e^*"]
&
\displaystyle
\prod_{\underline\gamma\in\Gamma^{\bullet}}
  \laurent \otimes
  \Tot^\bullet\check C^\bullet\!\left(
    \Spf\pwr,\,
    \mathscr G^\bullet_{\widehat e^*\widehat{\LL}}
  \right)
\\
\displaystyle
\prod_{\underline\gamma\in\Gamma^{\bullet}}
  \laurent \otimes
  \check C^\bullet\!\left(
    \mathfrak P_{\underline\gamma},\,
    \widehat{\LL}
  \right)
  \arrow[u]
  \arrow[r,"\widehat e^*"]
  \arrow[d]
&
\displaystyle
\prod_{\underline\gamma\in\Gamma^{\bullet}}
  \laurent \otimes
  \check C^\bullet\!\left(
    \Spf\pwr,\,
    \widehat e^*\widehat{\LL}
  \right)
  \arrow[u]
  \arrow[d]
\\
\displaystyle
\prod_{\underline\gamma\in\Gamma^{\bullet}}
  \Tot^\bullet C^\bullet\!\left(
    Y,\,
    \check C_{\laurent}^\bullet\!\left(
      \widehat{\widetilde P}_{\underline\gamma},\,
      \widehat{\LL}
    \right)
  \right)
  =K^{\bullet,\bullet}
  \arrow[r,"\operatorname{ev}"]
&
\displaystyle
\prod_{\underline\gamma\in\Gamma^{\bullet}}
  \check C_{\laurent}^\bullet\!\left(
    \Spf\pwr,\,
    \widehat e^*\widehat{\mathcal L}
  \right)
\end{tikzcd}
  \end{center}
  In particular, the map of double complexes $\mathrm{ev}$ computes the natural evaluation map $H^\bullet_\Gamma(E_{w, \Tate}^g,  \LL) \to H^\bullet(\Gamma, \laurent)$ under the explicit zig-zag used in the proof of the comparison.

  On the other hand, the map $\mathrm{ev}$ also computes the natural restriction map $H^{\bullet}(\Gamma \ltimes \BZ^g, \mathcal{O}(\adictorus)) \to H^\bullet(\Gamma, \laurent)$. Indeed, there is a commutative diagram extending the zig-zag from the proof that $K^{\bullet, \bullet}$ computes $H^\bullet(\Gamma \ltimes \BZ^g, \mathcal{O}(\adictorus))$,
  \begin{center}
\begin{tikzcd}
{\displaystyle\prod_{\ugamma} \Tot^\bullet C^\bullet(Y, \check C^\bullet_{\laurent}(\Pt^\wedge_{\ugamma}, \Lt^\wedge))} \arrow[d] \arrow[rr] &  & {\displaystyle\prod_{\ugamma \in \Gamma^{\bullet}} \check C^\bullet_{\laurent}(\Spf \pwr, \mathcal{O})} \arrow[d] \\
{C^\bullet(\Gamma, C^\bullet(Y, \mathcal{O}(\adictorus)))} \arrow[rr]                                                                               &  & {C^\bullet(\Gamma, \laurent)}                                                                                               
\end{tikzcd}
  \end{center}

  \section{The Heisenberg group action on Fourier expansions}\label{heis} 
  We continue to assume that $\LL$ is as in the previous section, except with the even further hypothesis that $Q$ is a nondegenerate quadratic form whose discriminant has absolute value $\disc > 0$. We denote by $\langle -, - \rangle$ the pairing $\langle x, y \rangle := x^\top B y$. We change the base ring from $\BZ$ to $\BZ' := \BZ[1/(2 \disc), \zeta_\disc]$. We assume that $w$ is divisible by $\disc$, though this can obviously be weakened to only requiring divisibility by the exponent of the discriminant group of $Q$. We refer to \cite[\S 3, III]{KV} for much of this setup.  Also assume for now that $\disc$ is odd.

  The group $K(\LL)$ is a finite flat $\laurentp$-group scheme, and it even satisfies the containment of $\laurentp$-group schemes
  \[K(\LL) \subset E_{w, \Tate}[\disc]^g \subset E_{w, \Tate}^g.\]
  In fact, since we have base changed to $\BZ'$, the group scheme $K(\LL)$ is constant and given by the square of the discriminant group of $Q$,
\[\kappa_B \times \kappa_B := (B^{-1}\BZ^g/\BZ^g) \times (B^{-1}\BZ^g/\BZ^g) = (Y^\vee/Y) \times (Y^\vee/Y).\]
  Specifically, there is an isomorphism
  \[(Y^\vee/Y) \times (Y^\vee / Y) \to K(\LL) \subset E_{w, \Tate}[\disc]^g\]
  given in the Tate curve coordinates by a convention that will suit our calculations, namely
  \[(x, y) \mapsto q^{-\underline{x}}e(\underline{y}), \qquad e(\underline{y}) := (\exp(2 \pi i y_i))_{i = 1}^g.\]
  Here $\exp(2 \pi i y_i)$ is treated as an abstract element of the ring $\BZ'$, that is, a specific power of $\zeta_\disc$. 

  By definition, we know that for all $z = (x, y) \in K(\LL)$, there is an isomorphism $T_z^*\LL \overset{\varphi}{\simeq} \LL$. The group scheme $\mathcal{G}(\LL)$ fits into the short exact sequence
  \[1 \to \BG_m \to \mathcal{G}(\LL) \to K(\LL) \to 0,\]
  which admits a unique symmetric theta structure, that is, an isomorphism with the standard Heisenberg group from \cite[III]{KV}
  \[1 \to \BG_m \to H_B \to \kappa_B \times \kappa_B \to 0\]
  that respects the symmetric structures of both and restricts on $K(\LL)$ to the isomorphism given above. This uses the hypothesis that $\disc$ is odd --- see \cite{KV}.

  \begin{definition}\label{thetaaction}
    A theta structure $H_B \simeq \mathcal{G}(\LL)$ induces ``the'' action of $H_B$ on $H^\bullet(E_{w, \Tate}^g, \LL)$, which is by $(\varphi, z) \in \mathcal{G}(\LL)$ acting on $H^\bullet(E_{w, \Tate}^g, \LL)$ by
    \[H^{\bullet}(E_{w, \Tate}^g, \LL) \overset{T_z^*}{\to} H^\bullet(E_{w, \Tate}^g, T_z^*\LL) \overset{H^\bullet(\varphi)}{\simeq} H^\bullet(E_{w, \Tate}^g, \LL).\]
  \end{definition}

  The purpose of this section is to calculate explicit formulas for this action on $H^\bullet(E_{w, \Tate}^g, \LL)$ in terms of the Fourier expansion principle we already proved, \Cref{mainthm}. As usual, we will make use of relatively complete models of all the objects involved, namely of $T_z: E_{w, \Tate}^g \to E_{w, \Tate}^g$ and of the isomorphisms $T_z^*\LL \simeq \LL$. This is extremely similar to\footnote{in fact it is simpler because it doesn't require any procedure of choosing refinements of fans in order to produce the required relatively complete models} the exercise we already carried out of producing relatively complete models for the action of $\gamma \in \Gamma$ on $E_{w, \Tate}^g$ and the $\Gamma$-equivariant structure $\gamma^*\LL \simeq \LL$, so we are intentionally brief. 

  \subsection{Relatively complete models for the action of $K(\LL)$}

  Let $z = (x, y) \in \kappa \times \kappa$. Take the relatively complete model $\Pt_\Sigma$ for $E_{w, \Tate}^g$. The toric scheme $X_\Sigma$ admits a self-map given by $(1, y) \mapsto (1, y + wx)$. After base change to $\Pt_{\Sigma}$, this is a relatively complete model for translation by $q^{x} \in (q^{\frac{1}{\disc}\BZ})^{g}$ on $E_{w, \Tate}^g$.

  Similarly, $X_\Sigma$ admits a self-map given on each affine toric chart by $\underline{Z} \mapsto e(\underline{y})\underline{Z}$. This induces on $\Pt_{\Sigma}$ the multiplication-by-$e(\underline{y})$ map. Therefore, we may construct relatively complete models for all the translation maps $T_z: E_{w, \Tate}^g \to E_{w, \Tate}^g$, $z \in \kappa \times \kappa$.

  It remains to produce relatively complete models for at least some choice of isomorphisms $T_z^* \LL \to \LL$. By this we really mean: we will produce sections of $T_z^*\LL^\vee \otimes \LL$ over $G_{\laurent}$ by means of explicit construction at the level of relatively complete models (or even, if desired, at the level of Fourier expansions of global sections using \Cref{mainthm}).

  We retain our prior choice of relatively complete model $\Lt$ for $\LL$. Then $T_z^*\LL$ has a relatively complete model, induced by the piecewise linear function $z^*\psi \in \PL(\Sigma)$, namely the piecewise-linear function on the cone over the $wx$-translate of the decomposition of $Y \otimes \BR$ from \Cref{KSDD} with vertices at $wY$ with 
  \[z^*\psi(1, wy + wx) = \psi(1, wy).\]

  One verifies that for all $u \in Y \otimes \BR$, \begin{equation}\label{error}(\psi - z^*\psi)(1, u) = wQ'(-\underline{x}) - \langle u, \underline{x}\rangle + (wY\text{-periodic \emph{bounded} function})(u).\end{equation}
  The rational function $q^{Q'(-x)}Z^{-Bx}$ is a meromorphic section of the divisor $T_z^*\Lt^\vee \otimes \Lt$, and this is the natural choice of multiple of $Z^{-Bx}$ (the natural intertwiner between the $Y$-equivariant bundles  $T_z^*\Lt|_{\Gt_K}$ and $\Lt|_{\Gt_K}$) since the bounded error in \Cref{error} is in fact nonnegative when $u = 0$ --- so $q^{Q'(-x)}Z^{-Bx}$ extends to a morphism $T_z^*\Lt|_{\Gt} \to \Lt|_{\Gt}$. In any event, the boundedness in \Cref{error} ensures that multiplication by some large enough power of $q$ turns $q^{Q'(-x)}Z^{-Bx}$ into a $Y$-equivariant global section of $T_z^* \Lt^\vee \otimes \Lt$, which descends and algebrizes following the usual combination of \Cref{ourdef} with formal GAGA \cite[Proposition (5.1.2)]{EGAIII} (already used in \Cref{nonequiv}) to a morphism $T_z^*\LL \to \LL$. After passing to $G_{\laurent}$, one is allowed to divide out by the required powers of $q$ to get back to what is reasonable to call ``$q^{Q'(-x)}Z^{-Bx}$''. Over $G_{\laurent}$ this is an isomorphism $T_z^*\LL \to \LL$ and therefore lives in $\mathcal{G}(\LL)$ over $z \in K(\LL)$, as the inverse can be constructed in exactly the same way thanks to the boundedness in \Cref{error}. It is therefore admissible to refer to specific sections of $\mathcal{G}(\LL)$ lying over $z = q^{-x}e(y) \in K(\LL)$ by expressions like $q^{Q'(-x)}Z^{-Bx}$ or any $\laurentp^{\times}$-multiple thereof.

  As a result, we may construct the unique (by \cite[III]{KV}) symmetric theta structure on $\LL$,
  \[\varphi: H_B \simeq \mathscr{G}(\LL),\]
  by the rule
  \begin{equation}\label{thetastruct}(\lambda, x, y) \mapsto \begin{cases}\lambda q^{Q'(-x)}\underline{Z}^{-Bx}\exp(-2\pi i \langle x, y \rangle - (\disc + 1)\pi i b^\top y) \text{  if } a = 1\\
    \lambda  (-1)^{\disc b^\top x}q^{Q'(-x)}\underline{Z}^{-Bx}\exp(-2\pi i \langle x, y \rangle -(\disc + 1)\pi i b^\top y)) \text{  if } a = -1\\
    \lambda  q^{Q(x)}\underline{Z}^{-Bx}\exp(- 2\pi i \langle x, y \rangle) \text{  if } a = q^{1/2}\\ 
    \lambda  (-1)^{\disc b^\top x} q^{Q(x)}\underline{Z}^{-Bx}\exp(- 2\pi i \langle x, y \rangle) \text{  if } a = -q^{1/2}\\
\end{cases}
  \end{equation}
  The explanation of the various extra factors in different cases is as follows:
  \begin{itemize}
  \item The extra factors $\exp(-2\pi i \langle x, y \rangle)$ and the possible $(-1)^{\disc b^\top x}$ (in the cases $a = -1, -q^{1/2}$) are there to ensure that the theta structure is a genuine homomorphism $H_B \to \mathcal{G}(\LL)$. In particular, without the extra $(-1)^{\disc b^\top x}$, it wouldn't be the case that the image in $\mathcal{G}(\LL)$ of an element of $H_B$ doesn't depend on the choice of representative in $\BG_m \times B^{-1}\BZ^g \times B^{-1}\BZ^g$ (the fact that $\disc$ is odd makes this nontrivial).
  \item The extra factors of $\exp(-(\disc + 1)\pi i b^\top y)$ in the cases $a = \pm 1$ are there to ensure that the theta structure is symmetric as required. Indeed, when $a = \pm 1$, the normalized symmetric structure $[-1]^*\LL \to \LL$ is given by multiplication by $\underline{Z}^{b}$ rather than $1$ and this must be taken into account when conjugating the action of $(t, x, y)$ by $\vartheta(Z) \mapsto \underline{Z}^{b} \vartheta(Z^{-1})$.  
  \end{itemize}
  
  The factors of $\disc + 1$ provide a canonical choice of square-root of $\zeta_\disc$ that doesn't require taking more than a $\disc$-th root of unity and hence remains well-defined over $\laurentp$

  The theta structure \Cref{thetastruct} defines via \Cref{thetaaction} and \Cref{mainthm} an action of $H_B$ on $H^\bullet(\BZ^g, \mathcal{O}(\adictorus))$ which is given on cocycles by multiplication by exactly the formula \Cref{thetastruct} and pullback by $\underline{Z} \mapsto q^{-x}e(y)\underline{Z}$.

  When $\disc$ is even, one can still produce a symmetric theta structure on $\LL$ (as in \cite[Appendix B]{KV}), but at the cost of making sure the width $w$ is divisible by $2\disc$, and allowing a $2\disc$-th root of unity in the base ring. Since $\disc$ is now even, division by $2$ in $\kappa_B$ is not as simple a matter as multiplication by $(\disc + 1)/2$. Instead, as seen in \cite[Appendix B]{KV}, one has to reach outside of $\kappa_B$ for preimages under multiplication by $2$. This causes some non-uniqueness in the symmetric theta structure, which form a $\Hom(\kappa_B \times \kappa_B, \mu_2)$-torsor due to the standard short exact sequence of \cite[Lemma III$_a$.11]{KV}. In any event, one such theta structure may be constructed as follows. Let $c \in \BZ^g$  such that $Bc \equiv b \mod 2$. Then we may construct theta structures given as follows, in the same language as above:
  \begin{equation}\label{even_thetastruct}
      (\lambda, x, y) \mapsto
    \begin{cases}
      \lambda q^{Q'(-x)}\underline{Z}^{-Bx}\exp(- 2 \pi i \langle x, y \rangle - \pi i b^\top y)(-1)^{\langle c, y\rangle}, \text{ if } a = 1\\
      \lambda q^{Q'(-x)}\underline{Z}^{-Bx}\exp(- 2 \pi i \langle x, y \rangle - \pi i b^\top y)(-1)^{\langle c, x + y\rangle}, \text{ if } a = -1\\\lambda q^{Q(x)}\underline{Z}^{-Bx}\exp(- 2 \pi i \langle x, y \rangle), \text{ if } a = q^{1/2}\\\lambda q^{Q(x)}\underline{Z}^{-Bx}\exp(- 2 \pi i \langle x, y \rangle)(-1)^{\langle c, x\rangle}, \text{ if } a = -q^{1/2}
  \end{cases}.
  \end{equation}
  As before, the extra factors $\exp(- \pi i b^\top y)$ are there to ensure ensure that the theta structure is symmetric when $a = \pm 1$, as in that case the symmetric structure $[-1]^*\LL \to \LL$ also involves multiplication by $\underline{Z}^b$; and the extra factors of the form $(-1)^{\langle c, -\rangle}$ exist to make sure that these are well-defined group homomomorphisms valued in $\mathcal{G}(\LL)$ (in particular that they only depend on $x, y$ as elements of $\kappa_B$ and not just $B^{-1}\BZ^g$). 
\begin{appendices}
\section{Appendix on line bundles on torus embeddings}\label{toric-pdf}
Let $N$ and $M$ be abstract finite-rank lattices. We view $M$ to be the character lattice of a split torus $T$ over $\BZ$, and $N$ to be its dual.
For a (possibly infinite) fan $\Sigma$ in $N \otimes \BR$, as in the literature on toric varieties \cite{demazure, fulton}, we may produce a scheme $X_\Sigma$ produced by gluing together the various toric schemes $X_\sigma = \Spec \BZ[\sigma^\vee \cap M]$.

For a ray $\rho = \BR_{\geq 0} \cdot v_\rho$ in $\sigma$, there is a way to associate a codimension-1 irreducible closed subset $D_\rho$ of $X_\Sigma$. This is by defining, for $\sigma$ that have $\tau$ as a face,
\[D_\rho \cap U_\sigma = \Spec \BZ[\rho^\perp \cap \sigma^\vee \cap M],\]
which is a closed subscheme of $U_\sigma = \Spec \BZ[\sigma^\vee \cap M]$ via the (obviously surjective) map of rings
\[\BZ[\sigma^\vee \cap M] \to \BZ[\rho^\perp \cap \sigma^\vee \cap M]\]
given by sending all $m \in \sigma^\vee \cap M$ which do not pair to zero with $v_\rho$ to $0$. The $D_{\rho}$ are $T$-invariant Cartier divisors on $X_\Sigma$. 

Let $\PL(\Sigma)$ denote the abelian group of piecewise-linear continuous functions $\varphi: |\Sigma| \to \BR$, with the further property that for all cones $\sigma \in \Sigma$, $\varphi|_\sigma$ is the restriction to $\sigma$ of a linear function $\widetilde{\varphi|_{\sigma}}: N \otimes \BR \to \BR$, which itself arises from base-change from a linear $m: N \to \BZ$ (i.e. $\varphi$ is linear on all the cones and integer-valued on $N$).

The reason for considering such functions is
\begin{lemma}
$\PL(\Sigma)$ is in natural bijection with the group of $T$-invariant Cartier divisors on $X_\Sigma$ supported on $X_\Sigma - T$. 
\end{lemma}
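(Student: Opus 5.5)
The plan is to reduce to the affine toric charts and glue. A $T$-invariant Cartier divisor $D$ on $X_\Sigma$ supported on $X_\Sigma - T$ is determined by its restrictions to the affine pieces $U_\sigma = X_\sigma$, $\sigma \in \Sigma$. These restrictions are compatible under the open immersions $U_\tau \subset U_\sigma$ for faces $\tau \preceq \sigma$. So it suffices to do two things. First, identify $T$-invariant Cartier divisors on a single $U_\sigma$ supported off $T$ with linear functions $m_\sigma \in M$, taken modulo $M(\sigma) := \sigma^\perp \cap M$. Second, check that compatibility on overlaps is exactly continuity and agreement on common faces. No finiteness of $\Sigma$ is used anywhere, since everything is local on cones; this is the point of adapting the usual argument of \cite{fulton} to the infinite fan case.

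On an affine chart the argument goes as follows. A Cartier divisor on $U_\sigma$ is given by a rational function $f$ up to units of $\BZ[\sigma^\vee \cap M]$. $T$-invariance together with support off $T$ forces $f$, restricted to $T$, to be a unit of $\BZ[M]$, hence of the form $\pm \chi^{m}$. One shows this by using $T$-equivariance of the local equation and the fact that $D|_T = 0$ makes $f|_T$ invertible. Units of $\BZ[M]$ are $\pm\chi^m$ because $\BZ$ is a domain with units $\pm1$ and $\BZ[M]$ is a Laurent polynomial ring. Two such $\chi^{m}, \chi^{m'}$ define the same divisor on $U_\sigma$ iff $\chi^{m-m'}$ is a unit of $\BZ[\sigma^\vee\cap M]$, i.e. iff $m - m' \in \sigma^\perp$. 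Setting $\varphi|_\sigma := \langle -m_\sigma, - \rangle$ (with the sign convention fixed so that $D_\rho$ corresponds to the function equal to $-1$ at $v_\rho$, consistent with the paper's use $\psi(1,y) = -\mathsf{Q}(y)$) therefore gives a well-defined linear function on $\sigma$ that is integral on $N$. Conversely, every such function arises this way.

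For gluing, on $U_\sigma \cap U_{\sigma'} = U_{\sigma\cap\sigma'}$ the local data agree iff $m_\sigma - m_{\sigma'} \in (\sigma\cap\sigma')^\perp$, i.e. iff the two linear functions agree on $\sigma \cap \sigma'$. Hence compatible families $(m_\sigma)$ modulo the relations above are exactly continuous piecewise-linear functions on $|\Sigma|$, linear and integral on each cone, which is $\PL(\Sigma)$. Additivity of the correspondence is clear because $\chi^{m}\chi^{m'} = \chi^{m+m'}$, so this is a group isomorphism. As a check, the divisor $\sum_\rho a_\rho D_\rho$ corresponds to the $\varphi$ with $\varphi(v_\rho) = -a_\rho$.

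The main obstacle is the step showing that a $T$-invariant local equation can be taken to be a character. Over a field this is a standard weight-space argument. Over $\BZ$, and with $T$-invariance understood scheme-theoretically, I would argue as follows. Write $f = g/h$ with $g, h \in \BZ[\sigma^\vee\cap M]$. Equivariance says that under the coaction $\chi^{m} \mapsto \chi^m \otimes \chi^m$, $f$ maps to $f \otimes u$ for a unit $u$ of $\BZ[M]$. Comparing supports of the Laurent expansions then forces $f$ to be a monomial times a constant in $\BQ^\times$. The constant is absorbed, since only $\pm1$ survive as units and the divisor is supported off $T$, which forces it to be $\pm1$ after clearing. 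I would check this carefully but expect no real difficulty beyond bookkeeping.
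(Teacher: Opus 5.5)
Your global structure is the standard argument the paper is invoking. You use the sheaf property of Cartier divisors, classify the $T$-invariant ones supported off $T$ on each chart $U_\sigma$ by $M/(\sigma^\perp\cap M)$, and glue using that the units of $\BZ[\tau^\vee\cap M]$ are exactly $\pm\chi^m$ with $m\in\tau^\perp\cap M$. The paper gives no proof beyond remarking that the literature's argument is local on cones and so survives for infinite $\Sigma$. Your gluing step, injectivity and sign conventions are all fine.

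The gap is in the local step, and it is not where you located it. You open with ``a Cartier divisor on $U_\sigma$ is given by a rational function $f$ up to units of $\BZ[\sigma^\vee\cap M]$'', i.e.\ you assume $D|_{U_\sigma}$ is principal. That is not automatic for a Cartier divisor on an affine scheme; the obstruction is its class in $\Pic(U_\sigma)$. It is precisely the substantive content of the local lemma that every $T$-invariant Cartier divisor on $U_\sigma$ is $\mathrm{div}(\chi^u)$. Local equations on a cover $\{V_i\}$ of $U_\sigma$ are only units on $V_i\cap T$, so none of them need be a character or generate over all of $U_\sigma$. Your ``main obstacle'' paragraph also starts from a global $f=g/h$, so it does not address this. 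Conversely, the step you flag is immediate once principality is known: as you note yourself, support off $T$ makes $f$ a unit on all of $T$, so $f\in\BZ[M]^\times=\{\pm\chi^m\}$. The coaction and support comparison is therefore superfluous, and $T$-invariance is not even used there.

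To close the gap, proceed as follows.
\begin{enumerate}
\item $U_\sigma$ is normal because $\sigma^\vee\cap M$ is saturated. So Cartier divisors on $U_\sigma$ are determined by their Weil divisors.
\item The vertical prime divisors meet $T$, so support off $T$ forces the Weil divisor of $D$ to be $\sum_{\rho\in\sigma(1)}a_\rho D_\rho$.
\item After base change to $\BQ$, run Fulton's argument. $\Gamma(U_{\sigma,\BQ},\mathcal{O}(D))$ is an $M$-graded invertible module, so by graded Nakayama at the closed orbit it has a local generator that is a character $\chi^u$. The locus where $\chi^u$ generates is open, $T$-stable and meets the closed orbit, hence is all of $U_{\sigma,\BQ}$. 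This gives $a_\rho=-\langle u,v_\rho\rangle$ for all $\rho\in\sigma(1)$.
\item These coefficients are read off at the generic points of the $D_\rho$, which lie over $\BQ$. So $D|_{U_\sigma}$ and $\mathrm{div}(\chi^{-u})$ have the same Weil divisor on $U_\sigma$, and by normality they coincide.
\end{enumerate}
This is free when $\sigma$ is smooth, since $\BZ[\sigma^\vee\cap M]$ is then a UFD. But the lemma is stated for arbitrary fans, and refinements such as $\Sigma_\gamma$ need not be smooth. Only surjectivity of the map from $\PL(\Sigma)$ to divisors depends on this step.
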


\begin{corollary}\label{picardgp}
The Picard group $\Pic(X_\Sigma)$ is in natural bijection with $\PL(\Sigma) / M$, where $M$ maps to $\PL(\Sigma)$ (injectively if the cones of $\Sigma$ span all of $N \otimes \BR$) via sending $m \in M$ to the corresponding globally linear map on $|\Sigma| \subset N \otimes \BR$. 
\end{corollary}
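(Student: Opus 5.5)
The statement to prove is \Cref{picardgp}: $\Pic(X_\Sigma)\cong \PL(\Sigma)/M$. I would deduce it from the preceding lemma, which identifies $\PL(\Sigma)$ with $T$-invariant Cartier divisors supported on $X_\Sigma - T$. The plan has two steps. First, show that every line bundle on $X_\Sigma$ is isomorphic to $\mathcal{O}(D)$ for such a divisor $D$. Second, show that two such divisors give isomorphic line bundles exactly when they differ by $\dv(\chi^m)$ for some $m \in M$. Under the lemma, $\dv(\chi^m)$ corresponds to the globally linear function $-m$ (or $m$, depending on the sign convention), so the map $M \to \PL(\Sigma)$ is the obvious one.

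For surjectivity, let $\LL$ be a line bundle on $X_\Sigma$. Its restriction to the open torus $T=\Spec \BZ[M]$ is trivial, since $\BZ[M]$ is a Laurent polynomial ring over $\BZ$, which is a UFD, so $\Pic(T)=0$. Choose a trivializing rational section $s$ of $\LL$ that is regular and nonvanishing on $T$. On each affine $X_\sigma=\Spec\BZ[\sigma^\vee\cap M]$, the line bundle $\LL$ is again trivial. This uses that the affine toric ring over $\BZ$ has trivial Picard group; if $\Sigma$ is not smooth, I would instead cite the standard argument of \cite{demazure, fulton}, which carries over verbatim because it is local on cones. So $s|_{X_\sigma}=f_\sigma\cdot e_\sigma$ with $e_\sigma$ a generator and $f_\sigma$ a unit on $T$, that is, $f_\sigma = \pm\chi^{m_\sigma}$. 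The collection $\{(X_\sigma, \chi^{-m_\sigma})\}$ is then a Cartier divisor with $\mathcal{O}(D)\cong\LL$, and it is $T$-invariant and supported off $T$. The compatibility of the $m_\sigma$ on overlaps $X_{\sigma\cap\tau}$ says exactly that the $m_\sigma$ glue to a continuous piecewise-linear function, i.e. an element of $\PL(\Sigma)$. Infiniteness of $\Sigma$ causes no trouble here, because the construction is cone-by-cone.

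For the kernel, suppose $\mathcal{O}(D)\cong\mathcal{O}$. The isomorphism is a rational function $h$ with $\dv(h)=D$, and $h$ is a unit on $T$. Since $\BZ[M]^\times=\pm M$, we get $h=\pm\chi^m$, so $D=\dv(\chi^m)$, which corresponds to the linear function $m$. Conversely, each such $\dv(\chi^m)$ is principal.

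Finally, injectivity of $M\to\PL(\Sigma)$ holds when the cones span $N\otimes\BR$: a linear function vanishing on a spanning set vanishes on all of $N\otimes\BR$. The main obstacle I expect is the triviality of line bundles on non-smooth affine toric charts over $\BZ$. For the fans used in this paper, which are smooth, each $X_\sigma$ is $\BA^k\times\BG_m^{n-k}$, and then this triviality is immediate from the fact that $\BZ[x_1,\dots,x_k,t^{\pm1}]$ is a UFD.
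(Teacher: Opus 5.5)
Your proposal is correct and is the standard deduction from the preceding lemma that the paper relies on without writing it out; the paper only remarks that the arguments of \cite{demazure, fulton} go through cone by cone, so an infinite $\Sigma$ is harmless. The one input you cite rather than prove, $\Pic(X_\sigma)=0$ for a singular cone $\sigma$ over $\BZ$, does hold, and you can close it as follows: by normality of $\BZ[\sigma^\vee\cap M]$, a line bundle trivial on $T$ comes from some divisor $\sum_\rho a_\rho D_\rho$, which equals $\dv(\chi^m)$ on the $\BQ$-fiber by the field case, and the difference vanishes because each $D_\rho$ dominates $\Spec\BZ$; this step is genuinely needed, since the standard references do not literally cover singular cones over $\BZ$.
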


The explicit bijection between $\PL(\Sigma)$ and $\CDiv_T(X_\Sigma)$ is chosen to take $\psi$ to $D = \sum_\rho -\psi(v_\rho) D_\rho$, so that the various $m_\sigma \in M$ defining $\psi$ are choices of single generating sections of $\mathcal{O}(D)$ over $U_\sigma$. This is the negation of a more naive convention.

Everything above is restated from the standard literature to reassure the reader that it goes through in the case where $\Sigma$ is infinite. On the other hand, the ampleness criteria in the literature  do actually use the finiteness of $\Sigma$, which is why this appendix had to be written.

Let $\Sigma$ be a possibly infinite fan in $N \otimes \BR$. In the usual ampleness criteria involving strict convexity of piecewise linear functions (\cite[\S 4.4, Corollaire 1]{demazure} and its descendents including \cite[Lemma on p.70]{fulton}), one assumes that $\Sigma$ is complete. One reason to do this is that at least a textbook definition of ``ample'' \cite{vakil} technically only applies to schemes that are proper over an affine scheme. Our goal is to construct ample line bundles using the definition from \cite{mumford_compositio} which applies without quasicompactness or properness hypotheses:
\begin{definition}
A line bundle $\mathscr{L}$ on a scheme $X$ is \emph{ample} if the open sets $X_s$ form a basis for the Zariski topology on $X$, as $s$ runs over all global sections of $\mathscr{L}^{\otimes N}$, $N \geq 0$. 
\end{definition}
\begin{lemma}\label{ampleness}
Let $\mathscr{L}$ be the line bundle on $X_\Sigma$ coming from $\psi \in \PL(\Sigma)$. Let $a_\rho := -\psi(v_\rho)$ for all rays $\rho$ in $\Sigma$, so that the Cartier divisor corresponding to $\psi$ is $\sum_\rho a_\rho D_\rho$. Suppose that both of the following conditions are satisfied:
  \begin{enumerate}
  \item $\psi$ is strictly convex, that is, for all maximal cones $\sigma \in \Sigma$ and rays $\rho \not\in\sigma$, $\left(\widetilde{\psi|_\sigma}\right)(v_\rho) > -a_\rho$; and
  \item The quantities \[\frac{-\langle m, v_\rho \rangle}{\left(\widetilde{\psi|_\sigma}\right)(v_\rho) + a_\rho}\]
    are bounded above, as $\sigma$ ranges over the maximal cones in $\Sigma$, $\rho$ ranges over the rays not contained in $\sigma$, and $m$ ranges over a (fixed a priori) finite set of semigroup generators for $\sigma^\vee \cap M$. 
  \end{enumerate}
  Then, $\mathscr{L}$ is ample on $X_\Sigma$. 
\end{lemma}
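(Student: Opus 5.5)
The plan is to show that the sets $(X_\Sigma)_s$, for $s$ a $T$-eigensection of some $\mathscr{L}^{\otimes N}$, contain each affine chart $U_\sigma$ as a union of such sets. This suffices for a basis: the $U_\sigma$ (over maximal $\sigma$) cover $X_\Sigma$, and on an affine $U_\sigma$ the distinguished opens $D(f)$ form a basis. So it is enough to show that, for each maximal $\sigma$, each principal open of $U_\sigma$ cut out by a character (or more generally by an element of $\BZ[\sigma^\vee\cap M]$) is of the form $(X_\Sigma)_s$.

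First I would identify the global sections concretely. With the convention $D=\sum_\rho a_\rho D_\rho$, a character $\chi^m$ is a global section of $\mathscr{L}^{\otimes N}$ exactly when $\langle m, v_\rho\rangle \geq -N a_\rho$ for every ray $\rho$ of $\Sigma$. This is the same computation as in the finite case, done chart by chart, so it survives for infinite $\Sigma$. On $U_\sigma$ the generator of $\mathscr{L}^{\otimes N}$ is $\chi^{N m_\sigma}$, where $m_\sigma = \widetilde{\psi|_\sigma}$ up to the sign convention. The section $s_\sigma := \chi^{Nm_\sigma}$ is therefore global as soon as $N m_\sigma(v_\rho) \geq -N a_\rho$ for all $\rho$. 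Condition (1) gives this with strict inequality for $\rho\notin\sigma$, and equality for $\rho\in\sigma$. Hence $(X_\Sigma)_{s_\sigma}$ is exactly $U_\sigma$, since the divisor of $s_\sigma$ is $\sum_{\rho\notin\sigma} N(m_\sigma(v_\rho)+a_\rho)D_\rho$ and this misses $U_\sigma$ precisely. The step needing care is that the complement of this support is exactly $U_\sigma$, which uses that the cones of $\Sigma$ meet along faces.

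Next, for a semigroup generator $m$ of $\sigma^\vee\cap M$ taken from the fixed finite set, I would consider $s = \chi^{Nm_\sigma + m}$. It is a global section of $\mathscr{L}^{\otimes N}$ provided $N(m_\sigma(v_\rho)+a_\rho) \geq -\langle m, v_\rho\rangle$ for every $\rho\notin\sigma$; for $\rho\in\sigma$ the condition holds automatically because $m\in\sigma^\vee$. This is exactly where condition (2) enters: the uniform upper bound on the ratios lets a single $N$ work simultaneously for the infinitely many rays $\rho$. Then $(X_\Sigma)_s = (U_\sigma)_{\chi^m}$. Products of these sections give $(U_\sigma)_{\chi^{m}}$ for every $m\in\sigma^\vee\cap M$.

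Finally, for a general $f=\sum c_m\chi^m$ in the coordinate ring of $U_\sigma$, the section $\chi^{Nm_\sigma}f$ (taking $N$ large) is a finite sum of global sections, so it is itself a global section with nonvanishing locus $D(f)\subset U_\sigma$. These sets form a basis of $U_\sigma$, and the $U_\sigma$ cover $X_\Sigma$, so the $(X_\Sigma)_s$ form a basis and $\mathscr{L}$ is ample. I expect the main obstacle to be the uniformity in the second step: making sure the bound in (2), taken over generators, really controls every ray for a fixed $N$ depending only on $\sigma$.
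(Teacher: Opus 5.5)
Your proposal is correct and follows essentially the paper's argument: strict convexity makes $\chi^{m_\sigma}$ a global section of $\mathscr{L}$ whose nonvanishing locus is exactly $U_\sigma$, and condition (2) supplies a single $N$ for which $\chi^{m+Nm_\sigma}$ is a global section of $\mathscr{L}^{\otimes N}$ for every generator $m$ of $\sigma^\vee\cap M$, so every distinguished open of $U_\sigma$ is some $(X_\Sigma)_s$. One detail to make explicit is that $(X_\Sigma)_s\subset U_\sigma$ requires the coefficients of $D_\rho$ for $\rho\notin\sigma$ to be strictly positive, which you get by taking $N$ strictly larger than the bound in (2) (as the paper does) or simply by replacing $s$ with $s\cdot\chi^{Nm_\sigma}$.
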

Note that condition (2) is always true when the fan $\Sigma$ is finite, as the set that we want that quantity to be bounded over is finite.
\begin{proof}
  For all maximal cones $\sigma \in \Sigma$, let $m_\sigma \in M$ be such that $\widetilde{\varphi|_\sigma} = \langle m, - \rangle$. Strict convexity ensures that $\chi^{m_\sigma} \in H^0(X, \mathscr{L})$, and that $U_\sigma = X_{\chi^{m_\sigma}}$ where $\chi^{m_\sigma}$ is considered as a global section of $\mathscr{L}$.  If the fan $\Sigma$ were finite then this fact (having the various $X_s$ cover $X$) would be enough to have $\mathscr{L}$ ample --- the standard ``QCQS'' argument is made even slightly simpler in this situation: the local sections of $\mathscr{L}$ over $U_{\sigma} = X_{\chi^{m_\sigma}}$, which are the same as the regular functions on the affine $U_\sigma$ (as $\mathcal{L}|_{U_\sigma} = \mathcal{O}_{U_\sigma}$), generate the topology on $U_\sigma$, and those local sections $s \in \mathcal{O}_{U_\sigma}$ have the property that $s \cdot (\chi^{m_\sigma})^N$ extends toa global section of $\mathscr{L}^N$ for $N \gg_s 0$; since $s \cdot (\chi^{m_\sigma})^N$ has the same nonvanishing locus within $U_\sigma$ as $s$, adding in these global sections is enough to generate the topology on $X_\Sigma$. The part of this argument that fails when $X_\Sigma$ is not quasicompact is the assertion that $s \in H^0(U_\sigma, \mathcal{O})$ all have the property that $s \cdot (\chi^{m_\sigma})^N$ extend to a global section of $\mathscr{L}^N$ --- this part is proved by gluing together various $s \cdot (\chi^{m_\sigma})^{N_{\sigma'}}$ that extend $s$ to the affine $U_{\sigma \cap \sigma'}$, and you need the finiteness of the fan $\Sigma$ to take the maximum of the $N_\sigma'$. 

  So our goal is to write elements of the coordinate ring $H^0(U_\sigma, \mathcal{O})$ as quotients of elements of $H^0(X_\Sigma, \mathscr{L}^{\otimes N})$ by $(\chi^{m_\sigma})^N$, which we will just translate into part (2) of the hypothesis. Taking $N$ larger than the upper bound from (2), which doesn't depend on $\sigma$ (that is the whole point of assumption (2)) we know that by (2), for all $m$ in a finite generating set of the semigroup  $\sigma^\vee \cap M$ and $\rho \not\in \sigma$, 
  \[\langle m + Nm_\sigma, v_\rho\rangle = \langle m, v_\rho\rangle + N \langle m_\sigma, v_\rho \rangle = \langle m, v_\rho\rangle + N \left(\widetilde{\varphi|_\rho}\right)(v_\sigma) \geq -N a_\rho.\]   
  Similarly, by definition of $\sigma^\vee$ and $m_\sigma$, if $\rho \in \sigma$, then
  \[\langle m + Nm_\sigma, v_\rho\rangle = \langle m, v_\rho\rangle + N \langle m_\sigma, v_\rho \rangle \geq - N a_\rho. \]
  In other words, $\chi^m \cdot (\chi^{m_\sigma})^N$ is a global section of $\mathscr{L}^{\otimes N}$. Since $m$ runs over the set of semigroup generators of $\sigma^\vee \cap M$, we deduce that $\sigma^\vee \cap M$ is generated by finitely many elements of the form $u - Nm_\sigma$, where $u \in M$ is a global sections of $\mathscr{L}^{\otimes N}$. This proves that the ring defining $U_\sigma$ is generated as a $\BZ$-algebra by functions of the form $\frac{\text{global section of } \mathscr{L}^{\otimes N}}{m_\sigma^N}$, which proves the desired ampleness of $\mathscr{L}$ (since the bound $N$ is uniform over the choice of $\sigma$).
\end{proof}
\end{appendices}
\bibliographystyle{plain} 
\bibliography{degens_bib}

\begin{biblist}

  \bib{SGA3}{book}{
    label = {SGA 3}
    AUTHOR = {M. Artin and J.E. Bertin and M. Demazure and P. Gabriel and  A. Grothendieck and M. Raynaud and J.-P. Serre}, 
     TITLE = {Sch\'emas en groupes. {F}asc. 3: {E}xpos\'es 8 \`a{} 11},
      NOTE = {Troisi\`eme \'edition,
              S\'eminaire de G\'eom\'etrie Alg\'ebrique de l'Institut des
              Hautes \'Etudes Scientifiques, 1963/64, dirig\'e{} par Michel
              Demazure et Alexander Grothendieck},
 PUBLISHER = {Institut des Hautes \'Etudes Scientifiques, Paris},
      YEAR = {1964},
     PAGES = {ii+178 pp. (not consecutively paged)},
   MRCLASS = {14.50},
  MRNUMBER = {207705},
} 

  \bib{sga71}{book}{
    label = {SGA {$7_1$}} 
     TITLE = {Groupes de monodromie en g\'eom\'etrie alg\'ebrique. {I}},
    SERIES = {Lecture Notes in Mathematics},
    VOLUME = {Vol. 288},
      NOTE = {S\'eminaire de G\'eom\'etrie Alg\'ebrique du Bois-Marie
              1967--1969 (SGA 7 I),
              Dirig\'e{} par A. Grothendieck. Avec la collaboration de M.
              Raynaud et D. S. Rim},
 PUBLISHER = {Springer-Verlag, Berlin-New York},
      YEAR = {1972},
     PAGES = {viii+523},
   MRCLASS = {14-06}, 
  MRNUMBER = {354656},
}
  \bib{EGAII}{article}{
    label = {EGA II}
    AUTHOR = {Alexander Grothendieck (r\'edig\'e avec la collaboration de Jean Dieudonn\'e)}, 
     TITLE = {\'El\'ements de g\'eom\'etrie alg\'ebrique. {II}. \'Etude
              globale \'el\'ementaire de quelques classes de morphismes.},
   JOURNAL = {Inst. Hautes \'Etudes Sci. Publ. Math.},
  FJOURNAL = {Institut des Hautes \'Etudes Scientifiques. Publications
              Math\'ematiques},
    NUMBER = {8},
      YEAR = {1961},
     PAGES = {222},
      ISSN = {0073-8301,1618-1913},
   MRCLASS = {14.55},
  MRNUMBER = {217084},
       URL = {http://www.numdam.org/item?id=PMIHES_1961__8__222_0},
}

  \bib{EGAIII}{article}{
    label = {EGA {$\text{III}_1$}}
    AUTHOR = {Alexander Grothendieck (r\'edig\'e avec la collaboration de Jean Dieudonn\'e)},
     TITLE = {\'El\'ements de g\'eom\'etrie alg\'ebrique: {III}. \'Etude
              cohomologique des faisceaux coh\'erents, Premi\`ere partie.},
   JOURNAL = {Publications math\'ematiques de l'Institut des Hautes \'Etudes Scientifiques},
  FJOURNAL = {Institut des Hautes \'Etudes Scientifiques. Publications
              Math\'ematiques},
    NUMBER = {11},
      YEAR = {1961},
     PAGES = {5-167},
      ISSN = {0073-8301,1618-1913},
   MRCLASS = {14.55},
  review = {\MR{217085}},
       URL = {http://www.numdam.org/item?id=PMIHES_1961__11__167_0},
}

\bib{EGAIV4}{article}{
  LABEL = {EGA {$\text{IV}_4$}}
AUTHOR = {Alexander Grothendieck (r\'edig\'e avec la collaboration de Jean Dieudonn\'e)},
     TITLE = {\'El\'ements de g\'eom\'etrie alg\'ebrique: {IV}. \'Etude
              locale des sch\'emas et des morphismes de sch\'emas, Quatri\`eme partie},
   JOURNAL = {Publications math\'ematiques de l'Institut des Hautes \'Etudes Scientifiques},
  FJOURNAL = {Institut des Hautes \'Etudes Scientifiques. Publications
              Math\'ematiques},
    NUMBER = {32},
      YEAR = {1967},
     PAGES = {5--361},
      ISSN = {0073-8301,1618-1913},
   MRCLASS = {14.55},
  review = {\MR{238860}},
MRREVIEWER = {J.\ P.\ Murre},
URL = {http://www.numdam.org/item?id=PMIHES_1967__32__361_0}
}

\end{biblist}

\end{document}